\documentclass[12pt,a4paper]{article}
\usepackage{latexsym,amssymb,amsfonts,amsmath,amsthm,nccmath,float,enumitem,setspace,bm,authblk,appendix,makecell}
\usepackage[usenames,dvipsnames]{xcolor}
\usepackage[hidelinks]{hyperref}
\usepackage[margin=1.8cm]{geometry}
\usepackage{cleveref}

\hypersetup{
	colorlinks,
	linkcolor={red!80!black},
	citecolor={blue!80!black},
	urlcolor={blue!80!black}
}

\newtheorem{Theorem}{Theorem}[section]

\newtheorem{Lemma}[Theorem]{Lemma}

\newtheorem*{Problem}{Problem}
\newtheorem*{Induction hypothesis}{Induction hypothesis}

\newcommand{\A}{\mathcal{A}}
\newcommand{\B}{\mathcal{B}}
\newcommand{\F}{\mathcal{F}}

\newcommand{\M}{\mathcal{M}}
\renewcommand{\P}{\mathcal{P}}

\renewcommand{\O}{\Omega}

\numberwithin{equation}{section}

\allowdisplaybreaks

\let\oldproofname=\proofname
\renewcommand{\proofname}{\rm\bf{\oldproofname}}

\allowdisplaybreaks[4]

\begin{document}
	
	\title{Erd\H{o}s--Ko--Rado-type problem for hypergraph matchings}	
	\author[a]{Binwei Zhao}
	\author[a,b]{Tao Feng}
	\author[c]{Xiaomiao Wang}
	\author[d]{Menglong Zhang}
	
	\affil[a]{School of Mathematics and Statistics, Beijing Jiaotong University, Beijing 100044, P.R. China}
	\affil[b]{Hebei Provincial Key Laboratory of Mathematical Theory and Analysis for Network and Data Science, Beijing Jiaotong University, Beijing, 100044, P.R. China}
	\affil[c]{School of Mathematics and Statistics, Ningbo University, Ningbo 315211, P.R. China}
	\affil[d]{Institute of Mathematics and Interdisciplinary Sciences, Xidian University, Xi'an 710126, P.R. China}
	
	\renewcommand*{\Affilfont}{\small\it}
	\renewcommand\Authands{ and }
	
	\affil[ ]{binweizhao@qq.com; tfeng@bjtu.edu.cn; wangxiaomiao@nbu.edu.cn; mlzhang@bjtu.edu.cn}
	
	\date{}
	\maketitle
	
\begin{abstract}
Given integers $1\leq t\leq k$, a family of $k$-matchings in a complete $r$-partite $r$-uniform hypergraph is said to be $t$-intersecting if any two of its members share at least $t$ common edges. This concept unifies several well-studied classes of intersecting families, including classical intersecting families, intersecting families of permutations, partial permutations, and generalized permutations, as well as intersecting families of injections. In this paper we employ two approaches to determine the maximum size of $t$-intersecting families of $k$-matchings and to characterize the extremal families that attain this bound. Using a recent result of Keller, Lifshitz, Minzer, and Sheinfeld on $t$-intersecting families of permutations, we obtain Erd\H{o}s--Ko--Rado-type theorems whose thresholds depend only on $t$. We also develop a $t$-cover-based approach that offers a complementary characterization of the extremal families.
\end{abstract}
	
\noindent {\bf Keywords}: Erd\H{o}s--Ko--Rado Theorem; $t$-intersecting family; complete $r$-partite $r$-uniform hypergraph; matching; covering number
	
\footnotetext{Supported by NSFC under Grant 12271023, and Ningbo Natural Science Foundation under Grant 2024J018}

\section{Introduction}
	
Let $\Omega$ be a nonempty set, and let $t$ and $k$ be integers satisfying $1 \leq t \leq k \leq |\Omega|$. Denote by $2^{\Omega}$ the power set of $\Omega$ and by $\binom{\Omega}{k}$ the family of all $k$-subsets of $\Omega$. A family $\F\subseteq 2^{\Omega}$ is said to be \emph{$t$-intersecting} if $|F_1\cap F_2|\geq t$ for all $F_1,F_2\in\F$; when $t=1$, $\F$ is simply called \emph{intersecting}. A central problem in extremal set theory is to determine the maximum size of a $t$-intersecting subfamily of a given family $\mathcal G \subseteq 2^{\Omega}$, and to characterize the subfamilies that achieve this size. A family $\mathcal{S}\subseteq \mathcal G \subseteq 2^{\O}$ is called a \emph{$t$-star} of $\mathcal G$ if it consists of all elements of $\mathcal G$ containing $t$ specific elements of $\O$; the case $t=1$ is simply termed a \emph{star}.
	
For example, let $\Omega=[n]:=\{1,2,\dots,n\}$. The celebrated Erd\H{o}s--Ko--Rado (EKR) Theorem \cite{EKR1961} shows that given $1\leq t\leq k$, for any integer $n>n_0(k,t)=(k-t)\binom{k}{t}^3+t$, if $\F\subseteq\binom{[n]}{k}\subseteq 2^{[n]}$ is a $t$-intersecting family, then $|\F|\leq \binom{n-t}{k-t}$, and equality holds if and only if $\F$ is a $t$-star of $\binom{[n]}{k}$. The smallest possible $n_0(k,t)$ has been proved to be $(t+1)(k-t+1)$ by Erd\H{o}s, Ko and Rado \cite{EKR1961} for $t=1$, and by Frankl for $t\geq 15$ \cite{frankl1978erdos}, and later by Wilson for all $t\geq 1$ \cite{wilson1984exact}.
\begin{Theorem}\emph{\cite{EKR1961,frankl1978erdos,wilson1984exact}}\label{thm:classical EKR}
	Let $\F\subseteq \binom{[n]}{k}$ be a $t$-intersecting family with $1\leq t\leq k\leq n$. If $n\geq (t+1)(k-t+1)$, then $|\F|\leq \binom{n-t}{k-t}$. Furthermore, if $n>(t+1)(k-t+1)$, then $|\F|=\binom{n-t}{k-t}$ if and only if $\F$ is a $t$-star of $\binom{[n]}{k}$.
\end{Theorem}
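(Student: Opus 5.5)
The natural route to Theorem~\ref{thm:classical EKR} is Wilson's spectral method, with the Delsarte linear-programming/ratio bound applied to the Johnson scheme $J(n,k)$. A $t$-intersecting family $\F\subseteq\binom{[n]}{k}$ is exactly an independent set in the graph on $\binom{[n]}{k}$ in which $A\sim B$ iff $|A\cap B|<t$. That is, $\F$ is independent in the union of the distance relations $R_i$ ($|A\cap B|=k-i$) for $i=k-t+1,\dots,k$. The plan has four steps.

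\textbf{Step 1: the weighted adjacency matrix.} We look for a symmetric matrix $M=\sum_{i>k-t}c_iA_i$ in the Bose--Mesner algebra, where $A_i$ is the $i$-th distance matrix. It should have constant row sum $\lambda_0$ and smallest eigenvalue $\lambda_{\min}$ satisfying
\[
1-\lambda_0/\lambda_{\min}=\binom{n}{k}\Big/\binom{n-t}{k-t}.
\]
Its eigenvalues on the eigenspaces $V_0,\dots,V_k$ are explicit combinations of Eberlein polynomials $E_j(i)$. Wilson's choice makes the eigenvalues on $V_1,\dots,V_t$ all equal to $\lambda_{\min}$. The remaining eigenvalues on $V_{t+1},\dots,V_k$ must then be shown to be $\ge\lambda_{\min}$ whenever $n\ge (t+1)(k-t+1)$.

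\textbf{Step 2: the ratio bound.} Take the characteristic vector $x$ of $\F$. Because $\F$ is independent, $x^TMx=0$. Decompose $x$ along the eigenspaces and use $\langle x,\mathbf 1\rangle=|\F|$. This gives $0\ge \lambda_0|\F|^2/\binom nk+\lambda_{\min}(|\F|-|\F|^2/\binom nk)$, which rearranges to $|\F|\le\binom{n-t}{k-t}$.

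\textbf{Step 3: uniqueness for $n>(t+1)(k-t+1)$.} In this range the eigenvalues on $V_j$ for $j>t$ are strictly larger than $\lambda_{\min}$. So if equality holds, $x$ lies in $V_0\oplus\cdots\oplus V_t$, the span of the characteristic vectors of $t$-stars. A combinatorial argument then shows that a $0/1$ vector of the right weight in this span, which also spans an independent set, must be a single $t$-star. For example, one can show that every $t$-set's degree structure forces a common $t$-set.

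\textbf{Main obstacle.} The hard part is the eigenvalue verification in Step 1. One must construct the coefficients $c_i$ explicitly (Wilson's matrix uses alternating binomial coefficients) and prove the inequality $\lambda_j\ge\lambda_{\min}$ for all $j>t$ exactly down to the sharp threshold. I would first write $\lambda_j$ via the identity $\sum_i c_iE_j(i)$ with Wilson's closed form, which is a ratio of binomials times a sum. I would then compare consecutive $j$ to reduce to a monotonicity statement, checking the extremal case $n=(t+1)(k-t+1)$ by hand. As an alternative for large $n$ only (the bound $n_0=(k-t)\binom kt^3+t$), a simple kernel/sunflower counting argument suffices. The sharp range is where the real difficulty lies, and for $t\ge15$ one could instead follow Frankl's shifting approach.
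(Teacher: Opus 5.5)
The paper does not prove this theorem; it quotes it from Erd\H{o}s--Ko--Rado, Frankl and Wilson. Your outline is Wilson's argument, and its frame in Steps~1--2 is sound. Any $M$ in the Bose--Mesner algebra supported on $R_{k-t+1},\dots,R_k$ vanishes on $\F\times\F$, diagonal included, and the ratio bound follows as you write. The normalization $1-\lambda_0/\lambda_{\min}=\binom nk/\binom{n-t}{k-t}$ is even automatic once $\lambda_1=\cdots=\lambda_t=\lambda_{\min}$: a $t$-star is independent and its characteristic vector lies in $V_0\oplus\cdots\oplus V_t$, so it attains your Step~2 bound with equality.

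The first gap is that the inequality carrying the theorem is only announced. You need $\lambda_j\ge\lambda_{\min}$ for all $t<j\le k$ when $n\ge (t+1)(k-t+1)$, and strictly when $n>(t+1)(k-t+1)$. You write down neither the coefficients $c_i$ nor any $\lambda_j$, and ``compare consecutive $j$ and check the extremal case by hand'' is a plan, not a proof. Until that computation is done, neither the bound nor the inclusion used in Step~3 is established.

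The second gap, in the last move of Step~3, you treat as routine. Knowing $x\in V_0\oplus\cdots\oplus V_t$ only says that $x$ is the restriction to $\binom{[n]}{k}$ of a polynomial of degree at most $t$. For $t\ge 2$ such a $0/1$ function need not depend on only $t$ points. For example, the family $\{A:1,2\in A\}\cup\{A:1\notin A,\ 3\in A\}$ has characteristic function $z_3+z_1z_2-z_1z_3$ with $z_i=[i\in A]$; it has degree $2$ but depends on three points. So no argument from the $t$-set degree structure alone can produce a common $t$-set; the $t$-intersecting property has to enter essentially. For $t=1$ your idea does work: write $x_A=\sum_{a\in A}g(a)$ and exchange one element of $A$ to see that $g$ takes two consecutive values, which together with $n>2k$ forces a star.

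For general $t$ you need a genuine ingredient. One option is the width/dual-width theorem of Brouwer, Godsil, Koolen and Martin. $\F$ has width at most $k-t$, and by the eigenspace condition dual width at most $t$, while every nonempty subset of $J(n,k)$ satisfies $w+w^*\ge k$. Their classification of the equality case, combined with $|\F|=\binom{n-t}{k-t}$, yields a $t$-star. Alternatively, you could invoke the Ahlswede--Khachatrian complete intersection theorem.
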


Up to now, there have been a large number of results on extensions of the EKR Theorem by taking $\Omega$ to be different combinatorial objects, not just for $\Omega=[n]$. We call these {\em EKR-type results}, and call the corresponding problems {\em EKR-type problems}. The reader is referred to \cite{FT16} for more details. In this paper, we consider EKR-type problem for hypergraph matchings.
	
Let $r\geq 1$ and $n_1,\dots,n_r$ be positive integers. Let $t$ and $k$ be integers such that $1\leq t\leq k\leq \min_{1\leq i\leq r} n_i$. Define the \emph{complete $r$-partite $r$-uniform hypergraph} $\mathcal{K}=\mathcal{K}(n_1,\dots,n_r)=(V,E)$ as the hypergraph whose vertex set $V$ is partitioned into $r$ parts $V_1,\ldots,V_r$ with $|V_i|=n_i$, and whose edge set consists of all $r$-element sets that contain exactly one vertex from each part, i.e., $E=\{\{x_1,\dots,x_r\}:x_i\in V_i,i\in[r]\}$. A \emph{$k$-matching} in $\mathcal{K}$ is a set of $k$ pairwise vertex-disjoint edges from $E$. Let $\mathcal{M}_k(n_1,\dots,n_r)$ be the set of all $k$-matchings of $\mathcal{K}$. Then $|\M_{k}(n_1,\dots,n_r)|=(k!)^{r-1}\prod_{i=1}^{r}\binom{n_i}{k}$. Let $\O=E(\mathcal{K})$. A subfamily $\F\subseteq \mathcal{M}_k(n_1,\dots,n_r)\subseteq 2^{\O}$ is said to be \emph{$t$-intersecting} if $|F_1\cap F_2|\geq t$ for any two $k$-matchings $F_1,F_2\in \F$. The size of a $t$-star of $\F\subseteq \M_{k}(n_1,\dots,n_r)$ is at most
\begin{align}\label{eq:20251205}
    ((k-t)!)^{r-1}\prod_{i=1}^{r}\binom{n_i-t}{k-t}.
\end{align}
	
The following EKR-type problem was introduced in \cite{Mammoliti2018}.
	
\begin{Problem}
	Determine the maximum-sized $t$-intersecting subfamilies of $\mathcal{M}_k(n_1,\dots,n_r)$.
\end{Problem}
	
If $r=1$, then $\mathcal{M}_k(n_1)$ can be regarded as $\binom{[n_1]}{k}$. This case corresponds to Theorem \ref{thm:classical EKR}.
	
If $r=2$ and $k=n_1=n_2=n$, then $\M_n(n,n)$ can be viewed as the set of all permutations of $[n]$, where each $n$-matching is equivalent to a permutation of $[n]$. This case corresponds to the EKR problem for permutations. Specifically, two permutations $\sigma_1$ and $\sigma_2$ on $[n]$ are called \emph{$t$-intersecting} if $|\{i\in [n]:\sigma_1(i)=\sigma_2(i)\}|\geq t$. A family $\F$ of some permutations on $[n]$ is \emph{$t$-intersecting} if any two permutations in $\F$ are $t$-intersecting. For $t$=1, Frankl and Deza \cite{frankl1977maximum} demonstrated that for any positive integer $n$, if $\F\subseteq\M_n(n,n)$ is intersecting then $|\mathcal{F}|\leq (n-1)!$. Cameron and Ku \cite{cameron2003intersecting}, and Larose and Malvenuto \cite{larose2004stable} independently proved that an intersecting family $\mathcal{F}\subseteq\M_n(n,n)$ has the maximum size $(n-1)!$ if and only if $\F$ is a star of $\M_n(n,n)$. For general $t$, Frankl and Deza \cite{frankl1977maximum} conjectured that there exists $n_0(t)$ such that for any $n\geq n_0(t)$, if $\mathcal{F}\subseteq\M_n({n,n})$ is $t$-intersecting, then $|\mathcal{F}|\leq (n-t)!$. This conjecture was confirmed by Ellis, Friedgut, and Pilpel \cite{ellis2011intersecting}, who further conjectured that for any $n>2t$, the $t$-stars of $\M_n(n,n)$ are the unique maximum-sized $t$-intersecting subfamilies. Recently, Keller, Lifshitz, Minzer and Sheinfeld \cite{permutations_n_linear_t} proved this conjecture for any $n\geq C_0t$ where $C_0$ is an absolute constant. They also provided a stability result for the structure of $\F$ with $|\F|\geq 0.75(n-t)!$.

	\begin{Theorem}\emph{\cite[Theorem 1]{permutations_n_linear_t}}\label{thm:KLMS}
		Let $\F\subseteq \M_n(n,n)$ be a $t$-intersecting family with $1\leq t\leq n$. Then there exists an absolute constant $C_0>0$ such that the following hold for all $n\geq C_0t$.
		\begin{itemize}
			\item[\em(1)] $|\F|\leq (n-t)!$.
			\item[\em(2)] If $|\F|\geq 0.75(n-t)!$, then there exists some $T\in\M_t(n,n)$ such that $\F\subseteq \P_n(T)$, where $\P_n(T)=\{P\in \M_{n}(n,n):T\subseteq P\}$.
		\end{itemize}
	\end{Theorem}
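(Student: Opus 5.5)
The plan is to follow the spread-approximation paradigm, combined with hypercontractivity for global functions on $S_n$, which is the route Keller, Lifshitz, Minzer and Sheinfeld take; in this paper Theorem~\ref{thm:KLMS} is only quoted and then applied. Throughout, identify $\M_n(n,n)$ with $S_n$ and a partial matching $T\in\M_s(n,n)$ with a partial permutation. Write $\P_n(T)$ for the permutations extending $T$, so that $|\P_n(T)|=(n-s)!$. The argument has three stages: a structural approximation, a sharp bound on the approximating object, and a cleaning step that also yields part~(2).

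\emph{Step 1 (spread approximation).} Fix a spreadness parameter $\beta$ and a depth $q=O(t)$. Iteratively, whenever some partial matching $T$ with $|T|\le q$ satisfies $|\F\cap\P_n(T)|\ge \beta^{|T|}|\F|$, pass to a maximal such $T$. Maximality means that $\F\cap \P_n(T)$, viewed inside $S_{n-|T|}$, is $\beta$-spread. This produces a family $\mathcal T$ of partial matchings of size at most $q$ and a remainder $\F'$ with the following properties:
\begin{itemize}
\item[(a)] $\F\setminus\F'\subseteq\bigcup_{T\in\mathcal T}\P_n(T)$, and each restriction $\F\cap\P_n(T)$ is spread;
\item[(b)] $|\F'|\le (C/n)^{\Omega(t)}(n-t)!$, which is negligible.
\end{itemize}
The key input is a spread lemma for $S_n$: two spread families in $S_{n-s}$ with $n\ge C_0t$ cannot avoid each other on every extra coordinate. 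From this lemma one deduces that $\mathcal T$ is itself $t$-intersecting, because any two members share at least $t$ edges. This is the step I expect to be the main obstacle. Establishing the spread lemma at linear $n$, rather than polynomial $n$, requires the hypercontractive inequality for global functions on $S_n$. In that framework spreadness is exactly globalness, and level-$d$ inequalities bound the correlation between two global indicators by $(Cd/n)^{d}$-type terms. I would first try to prove the lemma with a sunflower/spread-set argument, and fall back on the global hypercontractivity machinery if that fails.

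\emph{Step 2 (optimizing the approximation).} Since $\mathcal T$ is a $t$-intersecting family of small partial matchings, the goal is to show
\[
\sum_{T\in\mathcal T}(n-|T|)!\le (n-t)!,
\]
with near-equality only when $\mathcal T=\{T_0\}$ for a single $T_0\in\M_t(n,n)$. If $\mathcal T$ contains no $t$-set, every member has size at least $t+1$. A $t$-intersecting family of $(t+j)$-matchings has, at $n\ge Ct$, so few members relative to the factor $n^{-j}$ in $(n-t-j)!/(n-t)!$ that the weighted sum is at most $O(t/n)\cdot(n-t)!$. Two distinct $t$-sets cannot both lie in a $t$-intersecting family. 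So either the total weight is at most $(1/2+o(1))(n-t)!$, or there is a unique $T_0$ of size $t$ carrying almost all of $\F$.

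\emph{Step 3 (cleaning, giving (1) and (2)).} Suppose $|\F|\ge 0.75(n-t)!$. By Steps 1 and 2 there is a $T_0$ with $|\F\cap\P_n(T_0)|\ge(0.75-o(1))(n-t)!$, and this restriction is spread. Let $P\in\F$ with $T_0\not\subseteq P$. Then $P$ must $t$-intersect every member of $\F\cap\P_n(T_0)$. Since $P$ misses some edge of $T_0$, each such member must agree with $P$ on at least one coordinate outside $T_0$. A random element of $\P_n(T_0)$ does this with probability at most about $1-e^{-1}+o(1)<0.75$, and spreadness transfers this estimate to $\F\cap\P_n(T_0)$. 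This is a contradiction, so $\F\subseteq\P_n(T_0)$, which proves (2). For (1), if $|\F|>(n-t)!$ then certainly $|\F|\ge0.75(n-t)!$, so $\F\subseteq\P_n(T_0)$ and therefore $|\F|\le(n-t)!$, a contradiction. The constant $C_0$ is taken large enough for all the $o(1)$ terms above.
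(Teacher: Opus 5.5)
The paper contains no proof of this statement. Theorem~\ref{thm:KLMS} is quoted from \cite{permutations_n_linear_t} and used only as a black box, in Theorem~\ref{thm:main-result-injections} and Lemma~\ref{lem:sum_G leq N1N2}. Your text therefore has to stand on its own as a proof of the Keller--Lifshitz--Minzer--Sheinfeld theorem, and it does not. The step carrying all the difficulty, the spread lemma at linear $n$, is explicitly left open (``I would first try \dots\ and fall back on \dots''). It is also pointed the wrong way. To show that $\mathcal T$ is $t$-intersecting, you need the following: whenever $|T_1\cap T_2|<t$, the spread pieces $\F\cap\P_n(T_1)$ and $\F\cap\P_n(T_2)$ contain a pair of permutations agreeing in fewer than $t-|T_1\cap T_2|$ further positions. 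In other words, spread families are \emph{not} cross-$t'$-intersecting. Read literally, ``cannot avoid each other'' asserts the reverse and yields nothing. Nor is this routine. A family in $S_m$ is at best about $(1/m)$-spread, so summing over the $m$ coordinates only bounds the expected number of agreements of a random pair by a constant $C$. Markov's inequality then handles $t'>C$, but gives nothing for small $t'$ (e.g.\ $t'=1$), which is exactly the case that matters.

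Beyond that, Step~2 is false as stated, and (b) does not follow from your stopping rule.

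\textbf{Step~2.} The family of all $(t+1)$-matchings containing a fixed $T_0\in\M_t(n,n)$ is $t$-intersecting and contains no $t$-set. Yet its weight is $\sum_{T}(n-|T|)!=(n-t)^2(n-t-1)!=(n-t)\,(n-t)!$. Such families are what Step~1 produces on the extremal example. For $\F=\P_n(T_0)$ and $\beta\approx C/n$ we have $|\F\cap\P_n(T_0\cup\{e\})|=(n-t)!/(n-t)\ge\beta^{t+1}|\F|$ once $n\ge C^2$, so $T_0$ is never maximal. Step~2 would then give $|\F\setminus\F'|=O(t/n)(n-t)!$, contradicting (b). The dichotomy has to be on the $t$-covering number $C_t(\mathcal T)$, and the quantity to control is $|\bigcup_{T\in\mathcal T}\P_n(T)|$, not a weighted sum.

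\textbf{Claim (b).} Your stopping rule only gives $|\F'|\le\beta^{-(q+1)}(n-q-1)!$, and this is at least $C^{-(q+1)}n^{t}(n-t)!$. So you need $q\ge t\log n/\log C$, which is already of order $t\log t$ at $n=C_0t$. With such $q$, the routine branching bound for $C_t(\mathcal T)\ge t+1$ (as in Lemma~\ref{lemma:general recursion}) loses a factor of order $t(q-t+1)/n$, which is not small at $n=C_0t$. Resolving this tension between depth and linearity is precisely what a proof at $n\ge C_0t$ must achieve, and your sketch does not address it.

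\textbf{Step~3.} Here the conclusion comes from density, not spreadness. The permutations in $\P_n(T_0)$ that meet $P$ outside $T_0$ number fewer than $0.75(n-t)!$ by Lemma~\ref{lem:derangements}, exactly as in the proof of Lemma~\ref{lem:sum_G leq N1N2}(2-2). That set can itself be spread, so spreadness alone cannot transfer the estimate.
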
	
	
If $r=2$ and $k=n_1\leq n_2=n$, then a $k$-matching of $\mathcal{K}(k,n)$ can be reformulated as an \emph{injection} from $[k]$ to $[n]$, and $\M_{k}(k,n)$ is the set of all injections from $[k]$ to $[n]$. Brunk and Huczynska \cite{brunk2010EKRinjection} established the following result.
	
\begin{Theorem} \label{Brunk_injection_theorem}
Let $\F\subseteq \mathcal{M}_k(k,n)$ be a $t$-intersecting family with $1\leq t\leq k\leq n$.
\begin{enumerate}
\item[$(1)$] \emph{\cite[Theorem 2.1 and Corollary 2.10]{brunk2010EKRinjection}} If $t=1$, then $|\F|\leq \frac{(n-1)!}{(n-k)!}$, and equality holds if and only if $\F$ is a star of $\M_{k}(k,n)$.
\item[$(2)$] \emph{\cite[Corollary 3.3]{brunk2010EKRinjection}} If $1< t< k$ and $n>t+\frac{k!}{(k-t-1)!}$, then $|\F|\leq \frac{(n-t)!}{(n-k)!}$, and equality holds if and only if $\F$ is a $t$-star of $\M_{k}(k,n)$.
\end{enumerate}
\end{Theorem}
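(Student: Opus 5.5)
The plan for part (1) is the standard cyclic-decomposition (Katona-type) argument. The ambient family $\M_k(k,n)$ consists of the injections $f:[k]\to[n]$. For $t=1$ the claimed bound $\frac{(n-1)!}{(n-k)!}$ equals $\frac{1}{n}|\M_k(k,n)|$, so it suffices to partition $\M_k(k,n)$ into classes of size $n$ in which any two distinct members are disjoint, i.e.\ non-intersecting. Let $\sigma$ be the cyclic shift $x\mapsto x+1 \pmod n$ on $[n]$. The classes will be the orbits $\{\sigma^j\circ f: 0\le j<n\}$. For $j\neq j'$ the injections $\sigma^j\circ f$ and $\sigma^{j'}\circ f$ never agree at any point, so each orbit has exactly $n$ members and contains at most one member of an intersecting family. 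This gives $|\F|\le \frac{k!\binom nk}{n}=\frac{(n-1)!}{(n-k)!}$.

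For the equality case, an extremal $\F$ meets every orbit, for every choice of cyclic order on $[n]$, in exactly one member. I would first handle $k=n$, where the result is the Cameron--Ku / Larose--Malvenuto theorem cited above. For $k<n$ I would argue directly. Pick $f\in\F$ and consider the injections $g$ that agree with $f$ nowhere; these must all lie outside $\F$. Using the exact-transversal property for many different cyclic orders, I would show that $\F$ is closed under ``moving a single image to an unused value''. From this closure property, I would conclude that all members of $\F$ share a common pair $(i,f(i))$.

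A cleaner alternative for equality is a Hilton--Milner type comparison. If $\F$ is not contained in a star, bound $|\F|$ by counting injections that meet a fixed $f$ and also meet every member of some non-star witness set, and show that this count is strictly less than $\frac{(n-1)!}{(n-k)!}$. I expect the equality characterization to be the main obstacle, particularly for small $n-k$, where the counting margin is thin.

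For part (2) I would use the kernel method in the style of Frankl's original proof. Let $\F$ be a maximum $t$-intersecting family with $n>t+\frac{k!}{(k-t-1)!}$. If all members share a common $t$-set of pairs, then $\F$ lies in a $t$-star of size $\frac{(n-t)!}{(n-k)!}$ and we are done. Otherwise, fix $A\in\F$. Every $B\in\F$ contains one of the $\binom kt$ $t$-subsets $T\subseteq A$. For each such $T$ that does not lie in all members of $\F$, pick $C\in\F$ with $T\not\subseteq C$. Then every $B\supseteq T$ in $\F$ must contain an additional pair of $C$ outside $T$, giving at most $k$ choices. This bounds the size of that part of $\F$ by $k\frac{(n-t-1)!}{(n-k)!}$. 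Iterating this over the non-common $t$-sets gives a bound of the form $c(k,t)\frac{(n-t-1)!}{(n-k)!}$ with $c(k,t)\approx\frac{k!}{(k-t-1)!}$. This is smaller than $\frac{(n-t)!}{(n-k)!}$ exactly when $n-t>c(k,t)$, which forces $\F$ to be a $t$-star. The delicate step is getting the constant $c(k,t)$ down to $\frac{k!}{(k-t-1)!}$ rather than something cruder like $\binom kt k\cdot k!$. I would do this by choosing the $t$-subsets greedily and exploiting the injectivity constraints.
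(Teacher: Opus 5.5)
\textbf{Gap: the equality case of (1) for $k<n$.} Your cyclic-orbit argument for the inequality in (1) is correct and self-contained. The equality case, however, is not proved. The closure property you aim for is false as stated. In the star $\{f: f(i_0)=j_0\}$, moving the image of $i_0$ to an unused value leaves the star. Worse, a nonempty family closed under \emph{all} such moves is the whole of $\M_k(k,n)$, because with at least one unused value these moves connect any two injections, and $\M_k(k,n)$ is not intersecting. You never say which restricted closure you would prove, or how the transversal property for many cyclic orders would deliver it. The Hilton--Milner alternative is also not carried out, and your worry about small $n-k$ is justified. For $n=k+1$ every injection extends uniquely to an element of $\M_{k+1}(k+1,k+1)$, so such a count would essentially have to reprove the Cameron--Ku characterization there.

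\textbf{Missing idea: lift to permutations.} The paper only quotes this statement from Brunk--Huczynska, but its proof of Theorem~\ref{thm:main-result-injections} is exactly this lift. For $t=1$ it needs no threshold, since the Cameron--Ku/Larose--Malvenuto characterization holds for all $n$. Let $\gamma(P)=P\cap([k]\times[n])$. By Lemma~\ref{lem:fibers}, $\gamma^{-1}(\F)\subseteq\M_n(n,n)$ is intersecting of size $(n-k)!\,|\F|=(n-1)!$, so $\gamma^{-1}(\F)=\P_n(T)$ for a single pair $T$. Suppose some $F^*\in\F$ omits $T$. Pick $P^*\in\gamma^{-1}(F^*)$, so $T\subseteq P^*$. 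Lemma~\ref{lem:projection} (valid for $n\ge 3$; smaller $n$ is trivial) gives $P'$ with $P^*\cap P'=T$. Then $F'=\gamma(P')\in\F$ and $F^*\cap F'\subseteq F^*\cap T=\varnothing$, a contradiction. Hence $\F$ lies in a star of $\M_k(k,n)$ of the same size, so it equals that star; this also rules out $T\not\subseteq[k]\times[n]$.

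\textbf{Part (2).} This part is sound, and the step you call delicate is not. In the non-star case every $t$-subset of $A$ is non-common, so your count gives $|\F|\le k\binom{k}{t}\frac{(n-t-1)!}{(n-k)!}$ immediately, with no iteration or greedy choice. Moreover $k\binom{k}{t}\le\frac{k!}{(k-t-1)!}=t!\,(k-t)\binom{k}{t}$ except at $(t,k)=(2,3)$. Injectivity removes that exception: a pair of $C\setminus T$ can extend $T$ to a matching only if its first coordinate avoids the $t$ first coordinates of $T$. So at most $k-t$ pairs qualify, and the threshold becomes $n>t+(k-t)\binom{k}{t}$, which is weaker than the stated hypothesis. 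This is a one-step version of the paper's $t$-cover method, with $A$ serving as a $t$-cover of size $k$. The paper instead starts from a minimum $t$-cover of size $c$, iterates the kernel step, and optimizes over $c$. That yields the much better threshold $n>(t+1)(k-t+1)+t$ of Theorem~\ref{main_result_1}.
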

	
Our first result extends Theorem \ref{thm:KLMS} to the case of $k\leq n$.

\begin{Theorem}\label{thm:main-result-injections}
Let $\F\subseteq \M_{k}(k,n)$ be a $t$-intersecting family with $1\leq t\leq k\leq n$. Let $C_0$ be a constant such that Theorem \ref{thm:KLMS} holds. If $n\geq C_0t$, then $|\F|\leq \frac{(n-t)!}{(n-k)!}$, and equality holds if and only if $\F$ is a $t$-star of $\M_{k}(k,n)$.
\end{Theorem}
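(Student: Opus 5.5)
We reduce to Theorem \ref{thm:KLMS} by lifting injections to permutations. Every injection $f\in\M_k(k,n)$ (a $k$-matching of $\mathcal K(k,n)$) extends to exactly $(n-k)!$ permutations of $[n]$: we regard $[k]\subseteq[n]$ on the left side and extend $f$ arbitrarily on $[n]\setminus[k]$ to a bijection. Given a $t$-intersecting $\F\subseteq\M_k(k,n)$, set
\[
\widehat{\F}=\{\sigma\in\M_n(n,n):\sigma|_{[k]}\in\F\}.
\]
Then $|\widehat{\F}|=(n-k)!\,|\F|$, and any two members of $\widehat{\F}$ agree on at least $t$ points of $[k]$, so $\widehat{\F}$ is $t$-intersecting. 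Since $n\ge C_0t$, Theorem \ref{thm:KLMS}(1) gives $(n-k)!|\F|\le (n-t)!$, which is the bound $|\F|\le \frac{(n-t)!}{(n-k)!}$.

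For equality, suppose $|\F|=\frac{(n-t)!}{(n-k)!}$. Then $|\widehat{\F}|=(n-t)!\ge 0.75(n-t)!$, so Theorem \ref{thm:KLMS}(2) yields $T\in\M_t(n,n)$ with $\widehat{\F}\subseteq\P_n(T)$. Since $|\P_n(T)|=(n-t)!$, we in fact have $\widehat{\F}=\P_n(T)$. It remains to show that $T$ uses only left vertices in $[k]$; then $\F$ is exactly the $t$-star of $\M_k(k,n)$ determined by $T$, because $f\in\F$ iff some (equivalently every) extension contains $T$.

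Suppose instead that some edge $\{a,b\}\in T$ has $a\notin[k]$. We will find a permutation containing $T$ whose restriction to $[k]$ also has an extension avoiding $T$, contradicting $\widehat{\F}=\P_n(T)$, which is a union of full fibres of the restriction map. Take $\sigma\in\P_n(T)$ and let $f=\sigma|_{[k]}$; then $f\in\F$, so every extension of $f$ lies in $\widehat{\F}\subseteq\P_n(T)$. The extensions of $f$ range over all bijections $[n]\setminus[k]\to[n]\setminus f([k])$. Since $b\notin f([k])$ (as $\sigma(a)=b$ with $a\notin[k]$), if $n-k\ge 2$ we can choose an extension sending $a$ to a value other than $b$, a contradiction. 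If $n-k\le1$, there is only one extension, so this fibre argument fails and we need another route. For $n=k$ the statement is exactly Theorem \ref{thm:KLMS}. For $n=k+1$, the single point $a=k+1$ and its image are determined by $f$, so the edge $\{a,b\}\in T$ is forced by the other $n-1$ values. Here we would argue directly: $\P_n(T)$ restricted to $[k]$ consists of the injections containing $T\setminus\{\{a,b\}\}$ and missing $b$, and we would show this family is not $t$-intersecting when $n\ge C_0t$ is large (two such injections can share only the $t-1$ forced edges), contradicting the hypothesis.

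We expect the main obstacle to be this degenerate case $n-k=1$. In the case $n-k\ge2$, the fibre argument handles everything cleanly.
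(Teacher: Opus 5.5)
Your proposal is correct. The bound is obtained exactly as in the paper (your $\widehat{\F}$ is the paper's $\gamma^{-1}(\F)$), and so is the step $\widehat{\F}=\P_n(T)$. The routes diverge only in deducing that $\F$ is a $t$-star.

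The paper avoids any split on $n-k$. After disposing of $t=k$ and $n=k$, it takes $F^*\in\F$ with $T\not\subseteq F^*$ and a lift $P^*\in\P_n(T)$ of $F^*$. It then uses Lemma \ref{lem:projection}, a Hall/derangement construction needing only $n-t\ge 2$, to get $P'\supseteq T$ with $P^*\cap P'=T$. Then $\gamma(P')\in\F$ meets $F^*$ inside $T\cap F^*\subsetneq T$, a contradiction.

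Your fibre-saturation argument for $n-k\ge 2$ is more elementary: no Hall-type lemma is needed, since an edge of $T$ leaving $[k]$ can simply be re-routed inside the fibre of $f$. But it breaks down exactly when fibres are singletons, which is why you need the separate case $n=k+1$. Your remedy there, two injections meeting only in the $t-1$ forced edges, is the paper's derangement idea in disguise. Once written out, it works for every $n>k$, so using it from the start removes the case split.

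Two small points in that sketch:
\begin{itemize}
\item The two bijections between the remaining $(k-t+1)$-sets that disagree everywhere exist because $k-t+1\ge 2$, i.e.\ $t<k$. This, not the size of $n$, is what is actually used.
\item For $t=k$, $n=k+1$, the family is a single injection, so no contradiction arises. The $T$ supplied by Theorem \ref{thm:KLMS} may genuinely contain the edge at $k+1$. So $t=k$ should be dispatched as trivial at the outset, as the paper does.
\end{itemize}
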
	
	
If $r=2$ and $k<n_1=n_2=n$, a $k$-matching of $\mathcal{K}(n,n)$ is called a {\em $k$-partial permutation}. Ku and Leader \cite{ku2006erdHos} showed that for any $1\leq k<n$, if $\mathcal{F}\subseteq\M_{k}(n,n)$ is intersecting, then $|\mathcal{F}|\leq \binom{n-1}{k-1}^2(k-1)!$, and for $8 \leq k \leq n-3$, equality holds if and only if $\mathcal{F}$ is a star of $\mathcal{M}_k(n, n)$. The characterization for the extremal case was later extended to all $1\leq k<n$ by Li and Wang \cite{li2007erdHos}. For general $t$, Ku \cite[Theorem 6.6.6]{ku2004intersecting} proved that for any $1\leq t\leq k$	and sufficiently large $n$ (relative to $k$ and $t$), if $\mathcal{F}\subseteq\M_{k}(n,n)$ is $t$-intersecting, then $|\mathcal{F}|\leq \binom{n-t}{k-t}^2(k-t)!$, with equality if and only if $\F$ is a $t$-star of $\M_{k}(n,n)$. This result was subsequently improved by Borg \cite{borg2009t}, who weakened the condition on $n$ to $n\geq \binom{k}{t}\binom{3k-2t-1}{\lfloor\frac{3k-2t-1}{2}\rfloor}\frac{k!}{(k-t-1)!}+k+1$. Later, Borg \cite{borg2017max} further improved the bound to $n\geq \binom{k}{t}k+t+1$.
	
\begin{Theorem}\label{k_partial_permutations}
Let $\F\subseteq \mathcal{M}_k(n,n)$ be a $t$-intersecting family with $1\leq t\leq k< n$.
\begin{enumerate}
\item[$(1)$] \emph{\cite{ku2006erdHos,li2007erdHos}} If $t=1$, then $|\mathcal{F}|\leq \binom{n-1}{k-1}^2(k-1)!$, and equality holds if and only if $\F$ is a star of $\M_{k}(n,n)$.
\item[$(2)$] \emph{\cite[Theorem 4.12]{borg2017max}} If $1< t\leq k$ and $n\geq \binom{k}{t}k+t+1$, then $|\F|\leq \binom{n-t}{k-t}^2(k-t)!$, and equality holds if and only if $\F$ is a $t$-star of $\M_{k}(n,n)$.
\end{enumerate}
\end{Theorem}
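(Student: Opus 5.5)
For part $(1)$ I would use a Katona-type averaging argument. For part $(2)$ I would use a kernel/covering argument in the spirit of Frankl's original proof of the EKR Theorem, extended to partial permutations. This statement is quoted from \cite{ku2006erdHos,li2007erdHos,borg2017max}, so what follows is only how I would reprove it. Throughout, write $\mathcal{S}(T)=\{P\in\M_k(n,n):T\subseteq P\}$ for a $t$-matching $T$. Then $|\mathcal{S}(T)|=\binom{n-t}{k-t}^2(k-t)!$, and more generally a $j$-matching $J$ lies in exactly $\binom{n-j}{k-j}^2(k-j)!$ members of $\M_k(n,n)$.

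For $(1)$, I would choose a small structure $\mathcal{C}$ of $k$-partial permutations that is acted on transitively by $S_n\times S_n$ (relabel each side). A natural choice is the $n$ partial permutations obtained by restricting the $n$ translates $x\mapsto x+c \pmod n$ to suitable $k$-sets. For such a structure one shows that an intersecting subfamily of $\mathcal{C}$ has at most a $\frac{k}{n}$-fraction of $\mathcal{C}$. Averaging over the group then gives $|\F|\le \frac{k}{n}|\M_k(n,n)|=\binom{n-1}{k-1}^2(k-1)!$. For uniqueness, equality forces every copy of $\mathcal{C}$ to meet $\F$ in an extremal configuration, namely all members through a common edge. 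I would then propagate this local structure along overlapping copies of $\mathcal{C}$ to get a global star.

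For $(2)$, suppose $\F$ is $t$-intersecting and $\F\not\subseteq\mathcal{S}(T)$ for every $t$-matching $T$. Fix $F_0\in\F$. Every member of $\F$ contains some $t$-subset $T\subseteq F_0$, which gives at most $\binom{k}{t}$ choices of $T$. For a fixed $T$, pick $G\in\F$ with $T\not\subseteq G$. Every $F\in\F\cap\mathcal{S}(T)$ satisfies $|F\cap G|\ge t$, so $F$ contains an edge $e\in G\setminus T$. Hence $F$ contains the $(t+1)$-matching $T\cup\{e\}$, and there are at most $k$ choices of $e$. This yields
\[
|\F|\le \binom{k}{t}\,k\,\binom{n-t-1}{k-t-1}^2(k-t-1)!
=\binom{k}{t}\frac{k(k-t)}{(n-t)^2}\binom{n-t}{k-t}^2(k-t)!,
\]
which is strictly less than the $t$-star bound once $(n-t)^2>\binom{k}{t}k(k-t)$. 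So a maximum family must lie inside some $\mathcal{S}(T)$, and maximality then gives equality with $\mathcal{S}(T)$. The same inequality also proves the size bound.

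The main obstacle is the threshold. The crude count gives $n\gtrsim t+\sqrt{\binom{k}{t}k^2}$, which is already smaller than $\binom{k}{t}k+t+1$ in many ranges but not uniformly. I would first try to verify directly that $(n-t)^2>\binom{k}{t}k(k-t)$ whenever $n\ge\binom{k}{t}k+t+1$. This reduces to $\binom{k}{t}^2k^2>\binom{k}{t}k(k-t)$, which holds trivially. So the counting argument suffices for the stated threshold, and the real care goes into the boundary case $k=t$. There $\mathcal{S}(T)=\{T\}$ and the statement is immediate, since two distinct $t$-matchings cannot share $t$ edges.
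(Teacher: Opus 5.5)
Part (1) has a genuine gap; part (2) is correct.

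\textbf{The bound in part (1).} The target density is wrong. We have $|\M_k(n,n)|=\binom{n}{k}^2k!=\frac{n^2}{k}\binom{n-1}{k-1}^2(k-1)!$, so a star is a $\frac{k}{n^2}$-fraction of $\M_k(n,n)$. By contrast, $\frac{k}{n}|\M_k(n,n)|=n\binom{n-1}{k-1}^2(k-1)!$.

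Your $\mathcal{C}$ cannot deliver even the fraction you claim. Two distinct translates $x\mapsto x+c$ and $x\mapsto x+c'$ agree at no point, so their restrictions are edge-disjoint. Hence an intersecting subfamily of $\mathcal{C}$ has at most one member, and averaging only gives $|\F|\le\frac{n}{k}\binom{n-1}{k-1}^2(k-1)!$.

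A structure that does reach $\frac{k}{n^2}$ is the set of all $n^2$ restrictions of the translates to cyclic $k$-intervals. Intersecting subfamilies then lie in a single translate class and are Katona interval families, so they have at most $k$ members---but only when $n\ge 2k$. For $k<n<2k$ all $k$-intervals in a class pairwise intersect and the method gives nothing. This range is exactly what makes (1) nontrivial. Your part-(2) count does not rescue it either, since for $t=1$ it requires $(n-1)^2>k^2(k-1)$.

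\textbf{Uniqueness in part (1).} The sketch breaks for the same reason. Already at $n=2k$, choosing one interval from each of the $k$ complementary pairs gives $2^k$ maximum intersecting interval families, not all of which are stars once $k\ge 3$. So ``equality forces all members of each copy of $\mathcal{C}$ through a common edge'' is false. For $n<2k$ there is no local extremal structure to propagate at all. This is why the equality case needed separate work in the literature: Ku and Leader settled it only for $8\le k\le n-3$, and Li and Wang completed it. As written, part (1) is unproved on the full range $1\le k<n$. You need an averaging structure that genuinely handles $n<2k$, or a different argument.

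\textbf{Part (2).} Since $|G\cap T|\le t-1$, each $F\in\F\cap\mathcal{S}(T)$ satisfies $|(F\cap G)\setminus T|\ge 1$. The resulting bound is below the star size whenever $(n-t)^2>\binom{k}{t}k(k-t)$, and $n\ge\binom{k}{t}k+t+1$ implies this with much room. Your aside that $t+k\sqrt{\binom{k}{t}}$ is not uniformly below $\binom{k}{t}k+t+1$ is mistaken (it always is), but harmless.

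The paper only quotes this theorem and gives no proof of it. Your count is the one-step version of the paper's $t$-cover method in Section~\ref{section_t-covers}. That method replaces $F_0$ by a minimum $t$-cover and iterates the extension step to obtain the sharper threshold of Theorem~\ref{main_result_2}.
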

	
If $r=2$ and $k<n_1\leq n_2$, then a $k$-matching of $\mathcal{K}(n_1,n_2)$ is said to be a \emph{generalized permutation}. Borg and Meagher \cite{borg2015generalizedpermutation} proved that if $k\leq \min\{n_1,n_2\}$ and $\F\subseteq\M_{k}(n_1,n_2)$ is intersecting, then $|\F|\leq \binom{n_1-1}{k-1}\binom{n_2-1}{k-1}(k-1)!$ and equality holds if and only if $\F$ is a star of $\M_{k}(n_1,n_2)$. For general $r\geq 2$, Mammoliti \cite{Mammoliti2018} established the following result.

\begin{Theorem}\label{Mammoliti_main_result}
Let $\F\subseteq\M_{k}(n_1,n_2,\dots,n_r)$ be a $t$-intersecting family with $1\leq t\leq k\leq \min_{1\leq i\leq r} n_i$ and $r\geq 2$.
\begin{enumerate}
\item[$(1)$] {\em \cite[Theorem 1.3]{Mammoliti2018}} If $t=1$, then $|\F|\leq ((k-1)!)^{r-1}\prod_{i=1}^{r}\binom{n_i-1}{k-1}$, and equality holds if and only if $\F$ is a star of $\M_{k}(n_1,\dots,n_r)$.
\item[$(2)$] {\em \cite[Theorem 3.3]{Mammoliti2018}} Let $n_2$ be the second smallest $n_i$ for $1\leq i\leq r$. If $t>1$ and $n_2$ is sufficiently larger than $k$ and $t$, then $|\F|\leq \left((k-t)!\right)^{r-1}\prod_{i=1}^{r}\binom{n_i-t}{k-t}$, and equality holds if and only of $\F$ is a $t$-star of $\M_{k}(n_1,\dots,n_r)$.
\end{enumerate}
\end{Theorem}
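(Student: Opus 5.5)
We prove the two parts separately. Part (1) uses a clique--coclique (averaging) argument over the symmetry group of $\mathcal{K}$; part (2) uses a $t$-cover/kernel argument in which $n_2$ enters only through lower-order error terms.

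\textbf{Part (1): the bound.} Let $G=S_{V_1}\times\cdots\times S_{V_r}$. Then $G$ acts transitively on $\M_k(n_1,\dots,n_r)$ and preserves disjointness of matchings. Let $\Gamma$ be the graph on $\M_k$ in which two matchings are adjacent when they share no edge. Then $\Gamma$ is vertex-transitive, and an intersecting family is exactly an independent set of $\Gamma$. The clique--coclique bound $\alpha(\Gamma)\,\omega(\Gamma)\le|V(\Gamma)|$ holds for vertex-transitive graphs, and its fractional version suffices. By the formula for the size of a $t$-star with $t=1$,
\[
\frac{|\M_k|}{((k-1)!)^{r-1}\prod_i\binom{n_i-1}{k-1}}=k^{r-1}\prod_{i=1}^r\frac{n_i}{k}=\frac{\prod_i n_i}{k}.
\]
So it suffices to find a probability distribution on families $\mathcal{C}$ of pairwise edge-disjoint $k$-matchings such that every fixed $M\in\M_k$ lies in $\mathcal{C}$ with probability $k/\prod_i n_i$. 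Averaging $|\F\cap\mathcal{C}|\le1$ over $G$-translates then gives the bound.

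To build $\mathcal{C}$, first take $n_1=\dots=n_r=n$ and $k=n$. Identify each $V_i$ with $\mathbb{Z}_n$ and take the $n^{r-1}$ perfect matchings
\[
\{\{x,x+a_2,\dots,x+a_r\}:x\in\mathbb{Z}_n\},\qquad a_2,\dots,a_r\in\mathbb{Z}_n.
\]
These partition $E$. In general, pick random $k$-subsets $W_i\subseteq V_i$ and use the same cyclic construction on $\prod_i W_i$. Averaging over $G$ gives the required uniform weight, since the clique has size $k^{r-1}$ and the stars are weighted uniformly. This bookkeeping step is routine but must be checked.

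\textbf{Part (1): equality.} Equality forces $|\F\cap g\mathcal{C}|=1$ for every $g\in G$. Suppose $\F$ is not a star. I would take two members of $\F$ and, using the freedom to choose $g$, construct a cyclic clique $g\mathcal{C}$ that avoids $\F$ entirely. This needs a local modification argument in the style of Cameron--Ku or Borg--Meagher, swapping two vertices of a part. I expect this to be the main obstacle. It is especially delicate when $k=n_i$ for several $i$, where the matchings are rigid.

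\textbf{Part (2).} I would use the $t$-cover (kernel) method. Let $\tau_t(\F)$ be the minimum size of an edge set $T$ with $|T\cap F|\ge t$ for all $F\in\F$. If some $t$-set of edges lies in every member, then $\F$ is contained in a $t$-star and we are done. Otherwise, as in Frankl's proof of the EKR Theorem for large $n$, we may assume $\F$ is maximal. Any fixed $F_0\in\F$ meets every member in at least $t$ edges, so there are at most $\binom{k}{t}$ choices of the common $t$-set. For each such $t$-set $S$ there is $F_S\in\F$ with $|F_S\cap S|<t$. Hence each member contains $S$ together with at least one further edge of $F_S\setminus S$, or meets $F_0$ in a different $t$-set.

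The number of $k$-matchings containing $t+1$ fixed disjoint edges is
\[
((k-t-1)!)^{r-1}\prod_i\binom{n_i-t-1}{k-t-1}.
\]
This equals the $t$-star bound times $\prod_i\frac{k-t}{n_i-t}\cdot(k-t)^{-(r-1)}$, which is $O\bigl(k/(n_1n_2)\bigr)$ relative to the star up to factors depending on $k$ and $t$. Summing over at most $\binom{k}{t}^2k$ such configurations therefore gives $|\F|$ strictly below the star size once $n_2$, the second smallest part, is large compared with $k$ and $t$. This is why $n_2$ and not $n_1$ appears in the hypothesis. Equality then holds only when $\F$ is a $t$-star.
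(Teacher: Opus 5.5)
The paper does not prove this theorem; it quotes both parts from Mammoliti. The closest argument in the paper is Theorem \ref{thm:main-result}, which recovers part (2) with threshold $n_2\ge R(t)$, independent of $k$, using fibers, Theorem \ref{thm:KLMS} and induction on $r$. Your part (2) is correct and is a more elementary alternative. It is essentially a one-step version of the $t$-cover argument of Section \ref{section_t-covers} (Lemma \ref{key lemma} with $F_0$ as the $t$-cover). For $\F$ not contained in a $t$-star it gives
\[
|\F|\le k\binom{k}{t}\bigl((k-t-1)!\bigr)^{r-1}\prod_{i=1}^{r}\binom{n_i-t-1}{k-t-1}
=\frac{k\binom{k}{t}(k-t)}{\prod_{i=1}^{r}(n_i-t)}\,\bigl((k-t)!\bigr)^{r-1}\prod_{i=1}^{r}\binom{n_i-t}{k-t}.
\]
Your heuristic ``$O(k/(n_1n_2))$'' is misleading, because $n_1$ may equal $k$. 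What actually makes the argument work is that $n_1-t\ge k-t$ cancels the factor $k-t$ and $n_i-t\ge1$ for all $i$. So the ratio is at most $k\binom{k}{t}/(n_2-t)$, and $n_2>t+k\binom{k}{t}$ suffices, which is exactly the threshold of Theorem \ref{k_partial_permutations}(2).

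Part (1), however, has a genuine gap in the bound itself. Averaging a clique $\mathcal{C}$ of $\Gamma$ over $G$ gives only $|\F|\le|\M_k|/|\mathcal{C}|$, so reaching the star bound requires $|\mathcal{C}|=\prod_i n_i/k$. Correspondingly, each $M$ must lie in the random clique with probability equal to the reciprocal of the star size, not $k/\prod_i n_i$; that probability would only give $|\F|\le\prod_i n_i/k$. Your cyclic clique on $W_1\times\cdots\times W_r$ has just $k^{r-1}$ members. It therefore proves only $|\F|\le((k-1)!)^{r-1}\prod_i\binom{n_i}{k}$, which is off by the factor $\prod_i(n_i/k)$. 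The step you called routine bookkeeping is exactly where the argument fails.

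Nor can the clique method be repaired in general. A clique with $\prod_i n_i/k$ members is a partition of all of $E$ into $k$-matchings, which is impossible when $k\nmid\prod_i n_i$. A distribution over cliques never does better than $|\M_k|/\omega(\Gamma)$, since $\mathbb{E}|\mathcal{C}|\le\omega(\Gamma)$. Concretely, for $r=2$, $n_1=n_2=4$, $k=3$ the star has $18$ members and $|\M_3(4,4)|=96$. But $\omega(\Gamma)\le\lfloor 16/3\rfloor=5$, so the best bound cliques can give is $|\F|\le 19$.

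The method does work when some part has size exactly $k$. If $n_1=k$, identify $V_i$ with $\mathbb{Z}_{n_i}$ and $V_1$ with $\{0,\dots,k-1\}$; the matchings $\{(x,x+a_2,\dots,x+a_r):x\in V_1\}$ with $a_i\in\mathbb{Z}_{n_i}$ then partition $E$. For $k<\min_i n_i$ you need a different certificate. One option is a Katona-cycle-type subfamily $\mathcal{C}$ in which every intersecting subfamily has at most $\beta$ members, with $|\mathcal{C}|/\beta=\prod_i n_i/k$, as in Borg and Meagher's treatment of generalized permutations.

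Finally, the equality case of (1) is only announced, not argued. Since you planned to derive it from tightness of the clique bound, it inherits the same gap.
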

	
Our second result replaces the previous lower bounds of $n_2$ depending on both $k$ and $t$ in Theorems \ref{k_partial_permutations} and \ref{Mammoliti_main_result} with a new unified lower bound of $n_2$ depending on $t$ and $C_0$, where $C_0$ is a constant such that Theorem \ref{thm:KLMS} holds.
	
\begin{Theorem}\label{thm:main-result}
Let $\F\subseteq \M_{k}(n_1,n_2,\dots,n_r)$ be a $t$-intersecting family with $1\leq t\leq k\leq n_1\leq n_2\leq\dots\leq n_r$ and $r\geq 2$. Let $C_0$ be a constant such that Theorem \ref{thm:KLMS} holds and let $R(t) = \max\{C_0(t+1),t+4((t+1)^t-1)\}$. If $n_2\geq R(t)$, then $|\F|\leq \left((k-t)!\right)^{r-1}\prod_{i=1}^{r}\binom{n_i-t}{k-t}$, and equality holds if and only if $\F$ is a $t$-star of $\M_{k}(n_1,n_2,\dots,n_r)$.
\end{Theorem}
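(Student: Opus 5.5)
We plan to reduce Theorem \ref{thm:main-result} to the injection case, Theorem \ref{thm:main-result-injections}, by projecting onto two parts, and then to lift the bound and the extremal structure back up.

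\textbf{Step 1 (reduction to two parts).} Fix the parts $V_1$ and $V_2$. A $k$-matching $F\in\M_k(n_1,\dots,n_r)$ is determined by three pieces of data: its vertex set $A\subseteq V_1$ with $|A|=k$, the induced injection $\phi_F\colon A\to V_2$, and the remaining coordinates in $V_3,\dots,V_r$. Two matchings share an edge only if they agree on the $V_1$ and $V_2$ coordinates of that edge. Hence, if $F_1\cap F_2$ contains $t$ common edges, then the bipartite projections $\pi(F_1)$ and $\pi(F_2)$ onto $V_1\cup V_2$ share at least those $t$ common pairs. We will double count. For each choice of $A\subseteq V_1$ and of the extra data in $V_3,\dots,V_r$ (viewed as $k$-matchings labelled by $A$), consider the fibre of $\F$. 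Within a fibre where the $V_3,\dots,V_r$ labelling is fixed as functions of $A$, the $V_2$-injections from $A$ form a $t$-intersecting subfamily of $\M_k(k,n_2)$. Note that $n_2\ge C_0t$, so Theorem \ref{thm:main-result-injections} gives a fibre size of at most $\frac{(n_2-t)!}{(n_2-k)!}$.

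\textbf{Step 2 (why fibres alone are not enough).} Summing the fibre bound over all fibres gives a total that is too weak, because it ignores intersections between different fibres. The better route is to use cross-fibre structure. We will call a fibre \emph{large} if it has size at least $0.75$ of the maximum. For a large fibre, we use the stability part of Theorem \ref{thm:KLMS}(2), transferred to injections through the embedding used to prove Theorem \ref{thm:main-result-injections}. This forces the fibre to lie in a $t$-star determined by a $t$-set $T$ of edges. The threshold $C_0(t+1)$ in $R(t)$ presumably comes from applying this stability statement with $t+1$ in place of $t$, needed to rule out fibres that are nearly full but not stars. We then argue as follows.
\begin{itemize}
\item[(a)] If some fibre is large with centre $T$, the $t$-intersecting property forces every member of $\F$ to meet the fibre's members in $t$ edges. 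Since the fibre contains many injections avoiding any fixed edge outside $T$, every $F\in\F$ must contain $T$, so $\F$ is inside a $t$-star.
\item[(b)] If no fibre is large, then $|\F|\le 0.75\cdot(\#\text{fibres})\cdot\frac{(n_2-t)!}{(n_2-k)!}$. We must then compare this with \eqref{eq:20251205}.
\end{itemize}

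\textbf{Step 3 (main obstacle).} The comparison in (b) fails naively, because the number of fibres, $\binom{n_1}{k}\prod_{i\ge3}\binom{n_i}{k}k!$, is far larger than the $t$-star count over $A$, which carries factors $\binom{n_1-t}{k-t}$. So case (b) needs a $t$-cover argument, which is suggested by the term $t+4((t+1)^t-1)$. We will take a $t$-intersecting $\F$ not contained in a $t$-star and bound its $t$-covering structure. Every member contains some $t$-set of edges from a bounded family of $t$-covers. There are at most $(t+1)^t$ such covers, built by a branching argument: pick $F_0$, choose $t$ edges, then branch on at most $t+1$ alternatives at each of the $t$ levels. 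Each non-star cover sits among $(t+1)$-edge constraints, costing a factor of about $\frac{k^2}{n_2-t}$ relative to the star. When $n_2-t\ge 4((t+1)^t-1)$, summing these contributions gives strictly less than \eqref{eq:20251205}, with the factor $4$ absorbing the ratio of $k$-dependent terms after normalising with $n_1\le n_2$. Making this count independent of $k$ is the delicate step. There I would first try to bound the ratio $|\{F\supseteq T'\}|/|\{F\supseteq T\}|$ for $|T'|=t+1$ by $\prod_i\frac{k-t}{n_i-t}$, and to use the factor coming from $V_2$ to kill the $(t+1)^t$ multiplicity. Equality then forces $\F$ to be a full $t$-star.
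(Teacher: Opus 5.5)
\textbf{There is a genuine gap.} Everything hinges on case (b), and Step~3 does not prove it. Step~2(a) is not the problem; it is close to the derangement argument in Lemma~\ref{lem:sum_G leq N1N2}(2-2).

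The estimate in (b) fails even if you count only nonempty fibres and use that, for $r=2$, these form a $t$-intersecting subfamily of $\binom{[n_1]}{k}$. Nothing in the hypotheses forces $n_1\ge(t+1)(k-t+1)$. For example, when $n_1=2k-t$, all of $\binom{[n_1]}{k}$ is $t$-intersecting and $\binom{n_1}{k}/\binom{n_1-t}{k-t}\ge 3/2$. So $0.75\cdot\#\{\text{fibres}\}\cdot\frac{(n_2-t)!}{(n_2-k)!}$ exceeds the star size.

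The $t$-cover repair cannot work under a hypothesis on $n_2$ that is independent of $k$. Branching as in Lemma~\ref{key lemma} loses factors $\binom{k-\alpha}{\ell}$; your first step, choosing $t$ edges of $F_0$, already has $\binom{k}{t}$ options, not $(t+1)^t$. Meanwhile, the ratio of a $(t+1)$-star to a $t$-star is only $\frac{k-t}{\prod_i(n_i-t)}$, which is about $\frac{1}{n_2-t}$ when $r=2$ and $n_1=k+1$. Balancing the two needs $n_2$ of order $(t+1)(k-t+1)$. This is exactly why the cover-based Theorems~\ref{main_result_1} and~\ref{main_result_2} have $k$-dependent thresholds. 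You flag ``making this count independent of $k$'' as the delicate step, but that step is the whole content of the theorem. Also, neither $(t+1)^t$ nor $C_0(t+1)$ arises the way you guess.

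\textbf{The missing idea, for $r=2$, is to swap the roles of the two levels.} Instead of fixing the $k$-set in $V_1$ and varying the injection, extend each $F\in\F$ to all $G\in\M_{n_1}(n_1,n_2)$ containing it.
\begin{itemize}
\item The family $\mathcal{G}$ of extensions is $t$-intersecting in a space where Theorem~\ref{thm:main-result-injections} applies.
\item Each $\F_G=\{F\in\F:F\subseteq G\}$ is a $t$-intersecting subfamily of $\binom{G}{k}\cong\binom{[n_1]}{k}$, and $\sum_{G}|\F_G|=\frac{(n_2-k)!}{(n_2-n_1)!}|\F|$.
\item If $\F_G$ has a common $t$-set, then $|\F_G|\le N_1=\binom{n_1-t}{k-t}$. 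Otherwise one only gets $|\F_G|\le(t+1)^tN_1$, from EKR or from $\binom{n_1}{k}/\binom{n_1-t}{k-t}<(t+1)^t$. This is where $(t+1)^t$ comes from.
\item Such bad $G$ are rare. If $G'\in\mathcal{G}$ and $F_0\in\F$ with $F_0\subseteq G'$, then $G\cap F_0$ is a $t$-cover of $\F_G$, so $|G\cap G'|\ge t+1$.
\item Hence the bad $G$ form a $(t+1)$-intersecting family. By Theorem~\ref{thm:main-result-injections} at level $t+1$ it has size at most $\frac{N_2}{n_2-t}$, where $N_2=\frac{(n_2-t)!}{(n_2-n_1)!}$. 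This is where $C_0(t+1)$ comes from.
\item The existence of even one bad $G$ forces $|\mathcal{G}|<0.75N_2$, by Theorem~\ref{thm:KLMS}(2) plus the derangement bound.
\end{itemize}
Summing gives $\sum_G|\F_G|<0.75N_1N_2+\frac{N_2}{n_2-t}((t+1)^t-1)N_1\le N_1N_2$ once $n_2-t\ge 4((t+1)^t-1)$. The equality case then reduces to $\mathcal{G}$ being a $t$-star.

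For $r\ge3$ the paper inducts on $r$ by deleting $V_r$. The projection $\pi(\F)$ is itself $t$-intersecting, so the number of nonempty fibres is bounded by the $(r-1)$-partite case. Each fibre is bounded by Theorem~\ref{thm:main-result-injections}. In the equality case, two further claims show that all fibre centres $C_Q$ coincide.
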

	
Our third result, based on a $t$-cover argument, refines Theorem \ref{Brunk_injection_theorem}, Theorem \ref{k_partial_permutations}, and Theorem \ref{Mammoliti_main_result} in the case $t>1$. We state these refinements as three separate theorems below.
	
\begin{Theorem}\label{main_result_1}
Let $\F\subseteq \mathcal{M}_k(k,n)$ be a $t$-intersecting family with $1< t< k$ and $n>(t+1)(k-t+1)+t$. Then $|\F|\leq \frac{(n-t)!}{(n-k)!}$, and equality holds if and only if $\F$ is a $t$-star of $\M_k(k,n)$.
\end{Theorem}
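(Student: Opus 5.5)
Injections $[k]\to[n]$ are $k$-matchings of $\mathcal K(k,n)$, i.e.\ sets of $k$ pairs $(i,\sigma(i))$. We use a $t$-cover argument. Call a set $T$ of pairs a \emph{$t$-cover} of $\F$ if $|T\cap F|\geq t$ for every $F\in\F$. Let $\tau_t(\F)$ be the minimum size of a $t$-cover that is itself a partial matching, i.e.\ a partial injection. Every $F\in\F$ is a $t$-cover of $\F$, so $t\le\tau_t(\F)\le k$. If $\tau_t(\F)=t$, then some $t$-set $T$ of pairs satisfies $T\subseteq F$ for all $F\in\F$. Hence $\F$ lies in the $t$-star of $T$, which has size $\frac{(n-t)!}{(n-k)!}$, and equality forces $\F$ to be that star. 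So the task is to show that $\tau_t(\F)\ge t+1$ forces $|\F|<\frac{(n-t)!}{(n-k)!}$ when $n>(t+1)(k-t+1)+t$.

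\textbf{Branching bound.} Use a Frankl-type branching argument. Fix $F_0\in\F$. Every $F\in\F$ contains some $t$-subset $S\subseteq F_0$, and by $\tau_t\ge t+1$ the family $\F(S)=\{F\in\F:S\subseteq F\}$ is not all of $\F$. Pick $G\in\F$ with $S\not\subseteq G$. Since $|G\cap F|\ge t$ for $F\in\F(S)$, each such $F$ contains a pair of $G\setminus S$ in addition to $S$. Moreover, that pair shares no coordinate with $S$. Pairs of $G$ that clash with $S$ in the domain or range cannot occur in $F$; those in $G\cap S$ are already counted. This gives
\[
|\F(S)|\le \sum_{e\in G,\ e\cup S\text{ matching}} \frac{(n-t-1)!}{(n-k)!}\le k\,\frac{(n-t-1)!}{(n-k)!}.
\]
The refined form uses the fact that $S\cap G$ has size $s<t$. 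Hence
\[
|\F|\le \binom{k}{t}k\frac{(n-t-1)!}{(n-k)!}.
\]
This gives $|\F|<\frac{(n-t)!}{(n-k)!}$ only when $n-t>k\binom kt$, which is roughly Brunk--Huczynska's condition. The bound must therefore be sharpened to linear in $k$.

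\textbf{Reduction to the set case.} Project to domain-free structure. Because the domain is exactly $[k]$, every $F$ is determined by its image set plus a bijection. Write $F=\{(i,\sigma_F(i))\}$. Then $|F\cap F'|\ge t$ means $\sigma_F$ and $\sigma_{F'}$ agree on at least $t$ points. Consider the set $\hat F=\{(i,\sigma_F(i))\}$ as a $k$-subset of the $kn$-element ground set $[k]\times[n]$. Then $\F$ is a $t$-intersecting family of $k$-sets, but the ground set is structured.

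The plan is to mimic the Ahlswede--Khachatrian/Frankl comparison via the following shifting-free counting. Let $T$ be a minimal $t$-cover, of size $t+1\le |T|$, and classify $F$ by $|F\cap T|$. Choose $T$ of size $\tau\ge t+1$, and consider the families $\F_j=\{F:|F\cap T|=j\}$. Members of $\F_t$ must meet every other member outside $T$, which restricts them. I would try to show
\[
|\F|\le \binom{\tau}{t}\,(k-t)\frac{(n-t-1)!}{(n-k)!}\cdot c + \text{lower order},
\]
and specifically, when $\tau=t+1$, bound $|\F|$ by the size of the ``$(t+1)$-out-of-$(t+2)$''-type (Hilton--Milner/Frankl) family. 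That family has size approximately $(t+2)\frac{(n-t-1)!}{(n-k)!}$ plus corrections of order $(k-t)\frac{(n-t-1)!}{(n-k)!}\cdot\frac{1}{n}$. The comparison with $\frac{(n-t)!}{(n-k)!}=(n-t)\frac{(n-t-1)!}{(n-k)!}$ then needs $n-t>(t+1)(k-t+1)$, which is exactly the hypothesis. The reason for this threshold is as follows. The competitor families are the Frankl families $\{F:|F\cap A|\ge t+i\}$ with $|A|=t+2i$, relativized to injections. Their sizes are essentially $\binom{t+2i}{t+i}\frac{(n-t-i)!}{(n-k)!}$-type terms, and the first ratio comparison yields $(t+1)(k-t+1)$.

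\textbf{Main obstacle.} The hard step is bounding the case of large $\tau_t(\F)$ uniformly with only a linear-in-$k$ loss. My first attempt would be an iterative branching on a minimal cover. At each step one either gains a new fixed pair, costing a factor about $k-t+1$ choices against a saving factor $n-t$, or shows the family is tiny. The hypothesis $n-t>(t+1)(k-t+1)$ is tuned so that each branching step is a contraction. This gives $|\F|\le\sum_{j}(\text{branch count})\frac{(n-t-j)!}{(n-k)!}<\frac{(n-t)!}{(n-k)!}$.
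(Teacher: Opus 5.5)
\textbf{There is a genuine gap.} Your reduction to the case $\tau_t(\F)\ge t+1$ is fine. Restricting covers to partial matchings is harmless, since any $t$-set contained in all members is automatically a partial matching. But the proposal never proves the required bound in that case. The only rigorous estimate you derive is $|\F|\le\binom{k}{t}k\frac{(n-t-1)!}{(n-k)!}$, and, as you note yourself, it needs $n-t>k\binom{k}{t}$. Everything after it is heuristic (``I would try to show'', ``approximately'', ``plus corrections'', ``my first attempt would be''). In particular, the final inequality $|\F|\le\sum_j(\text{branch count})\frac{(n-t-j)!}{(n-k)!}<\frac{(n-t)!}{(n-k)!}$ is asserted, not derived.

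Your explanation of the threshold is also wrong. By your own estimate, the Frankl family $\{F:|F\cap A|\ge t+1\}$ with $|A|=t+2$ has size about $(t+2)\frac{(n-t-1)!}{(n-k)!}$. Comparing this with $(n-t)\frac{(n-t-1)!}{(n-k)!}$ gives a threshold near $2t+2$, not $(t+1)(k-t+1)$. For injections, fixing one more pair costs a factor of about $1/n$, not $k/n$ as for sets. So the set-case intuition does not transfer, and $(t+1)(k-t+1)$ is an artifact of the upper-bound argument rather than the size of a competitor family.

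\textbf{The missing idea.} Run the branching from a minimum $t$-cover $T$ with $|T|=c=\tau_t(\F)$, not from a member $F_0$, and keep branching until the fixed set has size at least $c$.
\begin{enumerate}
\item First, $|\F|\le\binom{c}{t}|\F_{H_1}|$ for some $H_1\in\binom{T}{t}$.
\item While $|H_i|<c$, minimality of $c$ says $H_i$ is not a $t$-cover. So some $F_i\in\F$ has $|F_i\cap H_i|=\alpha_i<t$, and every member of $\F_{H_i}$ contains $t-\alpha_i$ elements of $F_i\setminus H_i$.
\item Hence $|\F_{H_i}|\le\binom{k-\alpha_i}{t-\alpha_i}|\F_{H_{i+1}}|\le(k-t+1)^{t-\alpha_i}|\F_{H_{i+1}}|$, where $|H_{i+1}|=|H_i|+t-\alpha_i$. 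Adding all $t-\alpha_i$ elements at once, rather than a single pair as in your branching bound, is what turns your factor $k$ into $k-t+1$.
\item This yields $|\F|\le\binom{c}{t}(k-t+1)^{\omega-t}\frac{(n-\omega)!}{(n-k)!}$ for some $c\le\omega\le k$.
\item The ratio $\frac{k-t+1}{n-\omega+1}<1$, which uses $n>2k-t$, lets you replace $\omega$ by $c$.
\item The ratio $\frac{(n-c+1)(c-t)}{(k-t+1)c}\ge 1$ for $t+2\le c\le k$ lets you replace $c$ by $t+1$. This is exactly where $n>(t+1)(k-t+1)+t$ is used.
\item The result is $|\F|\le(t+1)(k-t+1)\frac{(n-t-1)!}{(n-k)!}<(n-t)\frac{(n-t-1)!}{(n-k)!}$.
\end{enumerate}
Without this chain (the stopping rule justified by minimality of $c$, the per-step factor $(k-t+1)^{t-\alpha_i}$, and the two monotonicity checks), your argument does not reach the stated threshold.
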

	
\begin{Theorem}\label{main_result_2}
Let $\F\subseteq \mathcal{M}_k(n_1,n_2)$ be a $t$-intersecting family with $1< t< k$ and $\min\{n_1,n_2\}\geq \sqrt{(k-t+1)(k-t)(t+2)}+t$. Then $|\F|\leq \binom{n_1-t}{k-t}\binom{n_2-t}{k-t}(k-t)!$, and equality holds if and only if $\F$ is a $t$-star of $\M_k(n_1,n_2)$.
\end{Theorem}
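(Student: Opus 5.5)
We prove Theorem \ref{main_result_2} by a $t$-cover argument. Write $\M=\M_k(n_1,n_2)$, $n=\min\{n_1,n_2\}$, and for a $t$-matching $T$ let $\P(T)=\{M\in\M: T\subseteq M\}$, so that $|\P(T)|=\binom{n_1-t}{k-t}\binom{n_2-t}{k-t}(k-t)!$. Let $\F$ be a maximum $t$-intersecting family; we may assume $|\F|\geq|\P(T)|$. Call a set $S$ of edges a \emph{$t$-cover} of $\F$ if $|S\cap F|\geq t$ for every $F\in\F$. Let $\tau_t(\F)$ be the minimum size of a $t$-cover. Every $F\in\F$ is a $t$-cover, so $t\leq\tau_t(\F)\leq k$. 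If $\tau_t(\F)=t$ and $S$ is a minimum $t$-cover, then every member of $\F$ contains all of $S$. In particular $S$ is a $t$-matching and $\F\subseteq\P(S)$, which gives the bound and, by maximality, $\F=\P(S)$. So the whole task is to show that $\tau_t(\F)\geq t+1$ forces $|\F|<|\P(T)|$.

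\textbf{Counting lemma.} For a set $S$ of edges and $F\in\F$, the number of members of $\F$ containing $S$ satisfies the following.
\begin{enumerate}
\item[(a)] If $S$ is a $t'$-matching that is not a $t$-cover, then some $F'\in\F$ has $|F'\cap S|\leq t-1$.
\item[(b)] Since $F'$ meets every member of $\F$ containing $S$ in at least $t$ edges, each such member contains an edge of $F'\setminus S$.
\end{enumerate}
Iterating this branching from $S=\emptyset$ until a $t$-cover of size $\tau=\tau_t(\F)$ is reached, we get that $\F$ is covered by at most $k^{\tau}$ (more precisely at most $\binom{k}{t}\cdot\prod$ of branching factors) families $\P(S')$ with $|S'|=\tau\geq t+1$. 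Each of these has size
\[
\binom{n_1-\tau}{k-\tau}\binom{n_2-\tau}{k-\tau}(k-\tau)!.
\]
This crude bound will not meet the square-root threshold, so we refine it as in Frankl's Hilton--Milner-type argument.

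\textbf{Refined bound when $\tau_t(\F)\geq t+1$.} Fix $F_0\in\F$. Every $F\in\F$ satisfies $|F\cap F_0|\geq t$, so $\F=\bigcup_{T\subseteq F_0,|T|=t}\F(T)$, where $\F(T)=\{F\in\F:T\subseteq F\}$. Since $T$ is not a $t$-cover, there is $G\in\F$ with $|G\cap T|\leq t-1$, and every member of $\F(T)$ must contain some edge $e\in G\setminus T$ that is vertex-disjoint from $T$. It would be natural to branch on at most $k-t+1$ such edges, each contributing at most
\[
\binom{n_1-t-1}{k-t-1}\binom{n_2-t-1}{k-t-1}(k-t-1)!=|\P(T)|\frac{(k-t)^2}{(n_1-t)(n_2-t)(k-t)}
\]
to $|\F(T)|$, so that
\[
|\F(T)|\leq|\P(T)|\frac{(k-t+1)(k-t)}{(n_1-t)(n_2-t)}.
\]
However, summing this over all $\binom{k}{t}$ choices of $T$ costs a factor $\binom{k}{t}$, which is far too large for the threshold $(n-t)^2\geq(k-t+1)(k-t)(t+2)$. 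The factor $t+2$ instead suggests a cover structure with only $t+2$ relevant pieces.

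\textbf{Main obstacle.} Replacing the $\binom{k}{t}$ factor by $t+2$ is the step I expect to be hardest. My plan is to adapt the Ahlswede--Khachatrian/Frankl comparison as follows.
\begin{enumerate}
\item First show that $\F$ has a $(t+1)$-set $U$ of edges, namely a matching, with $|F\cap U|\geq t$ for most $F$; this is the generalized Hilton--Milner family, whose size is roughly $(t+1)|\P(U)|$-like.
\item Then bound $|\F|$ by the following pieces: $t+1$ families of the form $\F\cap\P(U\setminus\{u\})$, each missing $u$ and so forced onto an extra edge, plus one more family $\F\cap\P(U)$.
\item Each forced piece has its size multiplied by at most $(k-t+1)(k-t)/((n_1-t)(n_2-t))$.
\end{enumerate}
This gives
\[
|\F|\leq(t+2)\frac{(k-t+1)(k-t)}{(n-t)^2}|\P(T)|\leq|\P(T)|,
\]
with strict inequality coming from overcounting, since the pieces overlap in nonempty families. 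If establishing the structure of $U$ in the first step fails, I would fall back to a shifting/kernel argument: intersect $\F$ with the $t$-intersecting family of $k$-sets of vertices in each part and use Theorem \ref{thm:classical EKR} to force a common $t$-matching.
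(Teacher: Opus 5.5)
Your reduction to the $t$-covering number and your branching step (b) match the paper's setup (the case $\tau_t(\F)=t$, and Lemma~\ref{key lemma}). However, there is a genuine gap exactly at the step you call the main obstacle, and your proposed way around it does not work. Step~1 asks for a $(t+1)$-matching $U$ with $|F\cap U|\ge t$ for ``most'' $F\in\F$. Nothing in the proposal produces such a $U$, and in Steps~2--3 the members with $|F\cap U|<t$ are never counted, so no bound on $|\F|$ follows. More seriously, if $\tau_t(\F)\ge t+2$ then no $(t+1)$-set is a $t$-cover at all, and your plan has no mechanism for showing that such families are small. A smaller point: your claim that one can branch on at most $k-t+1$ edges is unjustified when $|G\cap T|<t-1$, since $|G\setminus T|$ can be as large as $k$. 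The fallback also fails. A common $t$-set of vertices in each part does not give a common $t$-matching. Moreover, Theorem~\ref{thm:classical EKR} would need $\min\{n_1,n_2\}\ge (t+1)(k-t+1)$, which is much stronger than the hypothesis $\min\{n_1,n_2\}\ge\sqrt{(k-t+1)(k-t)(t+2)}+t$.

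The missing idea is to start the branching from a minimum $t$-cover $T$ itself, with $|T|=c=\tau_t(\F)\ge t+1$, rather than from a member of $\F$. Then $\F=\bigcup_{H\in\binom{T}{t}}\F_H$ has only $\binom{c}{t}$ pieces, which replaces your $\binom{k}{t}$. Since no set of size less than $c$ is a $t$-cover, each piece can be branched further: add $t-\alpha$ edges of a witness at cost $\binom{k-\alpha}{t-\alpha}\le(k-t+1)^{t-\alpha}$, until the forced matching has size $\omega\in[c,k]$. This gives
\[
|\F|\le\binom{c}{t}(k-t+1)^{\omega-t}\binom{n_1-\omega}{k-\omega}\binom{n_2-\omega}{k-\omega}(k-\omega)!.
\]
The rest is a two-step monotonicity check. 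First, the right-hand side decreases in $\omega$ on $[c,k]$, using $\min\{n_1,n_2\}>\sqrt{(k-t+1)(k-t)}+k-1$. Second, after setting $\omega=c$, it decreases in $c$ on $[t+1,k]$. This second step is where the factor $t+2$ in the hypothesis enters: the ratio of the bounds at $c-1$ and $c$ must exceed $1$ for $c\ge t+2$. It does not come from counting ``$t+1$ forced pieces plus one''. At $c=t+1$ the bound is $(t+1)(k-t+1)\binom{n_1-t-1}{k-t-1}\binom{n_2-t-1}{k-t-1}(k-t-1)!$. Beating the star then only requires $(n_1-t)(n_2-t)>(t+1)(k-t+1)(k-t)$, which holds strictly under the hypothesis, so no overlap argument is needed for strictness. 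Note also that when $c=t+1$, the $U$ you were seeking in Step~1 is simply a minimum $t$-cover, and it $t$-covers \emph{every} member of $\F$.
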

	
\begin{Theorem}\label{main_result_3}
Let $r\geq \log_2((t+1)(k-t+1)^2)$ and $1< t< k< \min_{1\leq i\leq r} n_i$. Let $\F\subseteq \mathcal{M}_k(n_1,\dots,n_r)$ be a $t$-intersecting family. Then $|\F|\leq \left((k-t)!\right)^{r-1}\prod_{i=1}^{r}\binom{n_i-t}{k-t}$, and equality holds if and only if $\F$ is a $t$-star of $\M_k(n_1,\dots,n_r)$.
\end{Theorem}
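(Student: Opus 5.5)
We prove Theorem~\ref{main_result_3} by a $t$-cover argument. For a $t$-intersecting family $\F\subseteq\M_k(n_1,\dots,n_r)$, call a set $T$ of pairwise disjoint edges a \emph{$t$-cover} of $\F$ if $|T\cap F|\geq t$ for every $F\in\F$. Let $\tau_t(\F)$ be the minimum size of a $t$-cover. Every member of $\F$ is itself a $t$-cover, so $t\leq\tau_t(\F)\leq k$. We split into two cases.

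\emph{Case $\tau_t(\F)=t$.} Let $T$ be a $t$-cover of size $t$. Then every $F\in\F$ contains all $t$ edges of $T$, so $\F$ lies in the $t$-star determined by $T$. Hence $|\F|$ is at most the quantity \eqref{eq:20251205}, with equality only if $\F$ is that whole $t$-star.

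\emph{Case $\tau_t(\F)\geq t+1$.} Here we show that $|\F|$ is strictly smaller than \eqref{eq:20251205}. We use the standard branching argument. Fix $F_0\in\F$. Every $F\in\F$ contains some $t$-subset of $F_0$. Since $F_0$ has $k$ edges, there are $\binom{k}{t}$ such subsets $S$. For each $S$ the family $\F_S=\{F\in\F: S\subseteq F\}$ is not all of $\F$, because $S$ is not a $t$-cover. So there is $F_1\in\F$ with $|F_1\cap S|\leq t-1$. Every $F\in\F_S$ meets $F_1$ in at least $t$ edges, so it contains an edge $e\in F_1\setminus S$ disjoint from the vertices of $S$. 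There are at most $k-t+1$ useful choices of $e$. For instance, if $|F_1\cap S|=t-1$, only $k-t+1$ edges of $F_1$ lie outside $S$; in general we count $t$-subsets of $F_1$ not already forced by $S$.

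Each $F\in\F_S$ then contains $S\cup\{e\}$, a fixed $(t+1)$-matching. The number of $k$-matchings containing a fixed $(t+1)$-matching is $((k-t-1)!)^{r-1}\prod_i\binom{n_i-t-1}{k-t-1}$. This gives
\[
|\F|\leq \binom{k}{t}(k-t+1)\,((k-t-1)!)^{r-1}\prod_{i=1}^r\binom{n_i-t-1}{k-t-1}.
\]
Compare this with \eqref{eq:20251205}. The ratio of one $(t+1)$-star to one $t$-star is
\[
\frac{1}{(k-t)^{r-1}}\prod_{i=1}^r\frac{k-t}{n_i-t}=(k-t)\prod_{i=1}^r\frac{1}{n_i-t}.
\]
Since $n_i\geq k+1$, each factor is at most $\frac{1}{k-t+1}$, so the ratio is at most $\frac{k-t}{(k-t+1)^r}$. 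We therefore need
\[
\binom{k}{t}(k-t+1)(k-t)<(k-t+1)^r.
\]

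This is where the main obstacle lies. The crude bound $\binom{k}{t}$ grows with $k$ polynomially of degree $t$. The hypothesis $r\geq\log_2((t+1)(k-t+1)^2)$ only gives $2^r\geq (t+1)(k-t+1)^2$, and it is phrased with base $2$, not base $k-t+1$. This suggests the counting must use that each $n_i-t\geq k-t+1\geq2$. The factor $\binom{k}{t}$ must also be replaced by something like $(t+1)(k-t+1)$. To achieve this, we would first reduce to $F_0$ and $F_1\in\F$ with $|F_0\cap F_1|=t$, which is possible if $\F$ is not already a $t$-star. Then every $F\in\F$ contains $F_0\cap F_1$ or meets $F_0\cup F_1$ beyond it. Branching more carefully gives at most about $(t+1)(k-t+1)^2$ classes. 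These are the classes where $F\supseteq F_0\cap F_1$ but misses some edge of a $(t+1)$-set, plus the classes choosing edges from $F_0\setminus F_1$ and $F_1\setminus F_0$. Each class lies in a star of a $(t+1)$-matching, or in an analogous restricted family. The size of each class is at most a $2^{-r}$ fraction of the $t$-star, using $\frac{k-t}{n_i-t}\leq\frac12$ coordinatewise or $k-t\geq1$ appropriately.

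Summing over the classes then gives $|\F|<$ \eqref{eq:20251205} exactly when $2^r\geq (t+1)(k-t+1)^2$, with strict inequality from a nonempty missing class. Uniqueness follows from the first case.
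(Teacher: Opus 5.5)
Your first case ($\tau_t(\F)=t$) is fine, but the second case, which is the whole content of the theorem, is not proved. Branching over the $t$-subsets of a member $F_0$ costs $\binom{k}{t}$, and as you note the hypothesis $2^r\ge (t+1)(k-t+1)^2$ cannot absorb that. Even your second branching step is miscounted: if $|F_1\cap S|=\alpha<t-1$, then $F_1\setminus S$ has $k-\alpha$ edges, not $k-t+1$.

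The proposed repair rests on claims that are false or unsupported:
\begin{enumerate}
\item A $t$-intersecting family lying in no $t$-star need not contain two members meeting in exactly $t$ edges. For instance, when $k\ge t+2$, the $k$-matchings containing at least $t+2$ edges of a fixed $(t+3)$-matching form a $(t+1)$-intersecting family that lies in no $t$-star.
\item The count ``about $(t+1)(k-t+1)^2$ classes'' is never derived. In particular, the members containing $F_0\cap F_1$ form a subfamily of a $t$-star, and bounding them needs further branching whose cost you do not control.
\item The coordinatewise bound $\frac{k-t}{n_i-t}\le\frac12$ fails when $n_i=k+1$ and $k-t\ge 2$, which is exactly the regime this theorem targets.
\item Strictness ``from a nonempty missing class'' is asserted, not shown.
\end{enumerate}

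The missing idea is to branch on a \emph{minimum} $t$-cover $T$ with $|T|=c=\tau_t(\F)$, rather than on a member. This costs only $\binom{c}{t}$: $|\F|\le\binom{c}{t}|\F_{H_1}|$ for some $t$-set $H_1\subseteq T$. Since no set of size $<c$ is a $t$-cover, you can iterate. Whenever $|H|<c$ there is $F\in\F$ with $|F\cap H|=\alpha<t$, and every member of $\F_H$ contains $t-\alpha$ edges of $F\setminus H$. Hence $|\F_H|\le\binom{k-\alpha}{t-\alpha}|\F_{H^*}|\le (k-t+1)^{t-\alpha}|\F_{H^*}|$ for some $H^*\supseteq H$ with $|H^*|=|H|+t-\alpha$. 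Stopping at the first size $\omega\ge c$ (and $\omega\le k$ because $\F_{H^*}\neq\emptyset$) gives $|\F|\le\binom{c}{t}(k-t+1)^{\omega-t}$ times the size of an $\omega$-star.

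This bound decreases in $\omega$, since the successive ratio is $\frac{(k-t+1)(k-\omega+1)}{\prod_i(n_i-\omega+1)}<1$. It also decreases in $c$, using $\frac{c-t}{c}>\frac1{t+1}$, $\prod_i(n_i-c+1)\ge 2^r$ and $2^r\ge(t+1)(k-t+1)^2$. So the worst case is $c=\omega=t+1$, i.e.\ $|\F|\le (t+1)(k-t+1)$ times a $(t+1)$-star. A $(t+1)$-star is a $\frac{k-t}{\prod_i(n_i-t)}\le (k-t)2^{-r}$ fraction of a $t$-star, and $(t+1)(k-t+1)(k-t)<(t+1)(k-t+1)^2\le 2^r$. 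This gives the strict inequality, and with it uniqueness of the $t$-stars. That is the paper's route (Lemmas~\ref{key lemma}--\ref{lemma_proof_3}).
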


Although the thresholds in Theorems \ref{main_result_1}--\ref{main_result_3} depend on both $k$ and $t$, these results offer complementary advantages over Theorems \ref{thm:main-result-injections} and \ref{thm:main-result}. First, the condition $n>(t+1)(k-t+1)+t$ in Theorem \ref{main_result_1} is weaker than $n\ge C_0t$ whenever $k < t-1+\frac{t(C_0-1)}{t+1}$ (note that $C_0$ is large by inequality (5.1) in \cite{permutations_n_linear_t}). A similar comparison applies between Theorem \ref{thm:main-result} (for $r=2$) and Theorem \ref{main_result_2}. Second, for $r\ge 3$, Theorem \ref{thm:main-result} requires $n_2,\dots,n_r\ge R(t)$ to force extremal families to be $t$-stars. In contrast, Theorem \ref{main_result_3} shows that the same conclusion holds whenever $r$ is sufficiently large relative to $k$ and $t$, even if each part has size as small as $k+1$.

The rest of this paper is organized as follows. Section \ref{section_preliminaries} contains the necessary preliminaries and proves Theorem \ref{thm:main-result-injections}. Sections \ref{section_GP} and \ref{section_HP} together provide the proof of Theorem \ref{thm:main-result}. Section \ref{section_t-covers} introduces a $t$-cover approach and proves Theorems \ref{main_result_1}--\ref{main_result_3}. Section \ref{section_concluding} offers some concluding remarks.

\section{Proof of Theorem \ref{thm:main-result-injections}}\label{section_preliminaries}

Given positive integers $a,b$ with $a\leq b$. To prove Theorem \ref{thm:main-result-injections}, we may bridge $\M_a(a,b)$ and $\M_{b}(b,b)$. We identify an edge of $\mathcal{K}(a,b)$ with an ordered pair $(x,y)\in [a]\times[b]$. Hence an element of $\M_a(a,b)$ is a subset $F\subseteq [a]\times[b]$ of size $a$.
	
Let $\mathcal{A}$ and $\mathcal{B}$ be two finite sets and let $f:\A \to \B$ be a mapping. For any $B\in \mathcal{B}$, the {\em fiber} of $f$ over $B$, denoted by $f^{-1}(B)$, is the set of all elements in $\A$ that map to $B$, i.e., $f^{-1}(B)=\{A\in \A:f(A)=B\}$. If $B_1,B_2\in \B$ and $B_1\neq B_2$, then $f^{-1}(B_1)\cap f^{-1}(B_2)=\varnothing$. Let $f(\mathcal{A})=\{f(A):A\in \A\}$. Then
\begin{align}\label{eq:property of fibers}
	\A=\bigsqcup_{B\in f(\A)}f^{-1}(B),~\text{where $\bigsqcup$ represents the disjoint union}.
\end{align}
The following lemma is straightforward and we omit its proof.
	
\begin{Lemma}\label{lem:fibers}
Let $\F\subseteq \M_{a}(a,b)$ be a $t$-intersecting family with $1\leq t\leq a\leq b$. Define a mapping $
\gamma:\M_{b}(b,b)\to\M_{a}(a,b)$ satisfying that $\gamma(P)=P\cap([a]\times[b])$ for any $P\in\M_{b}(b,b)$. Then the following hold.
\begin{enumerate}
\item[\em(1)] For any $F\in\M_{a}(a,b)$, $\gamma^{-1}(F)=\{P\in\M_{b}(b,b):F\subseteq P\}$.
\item[\em(2)] The family $\gamma^{-1}(\F):=\{P\in\M_{b}(b,b):\gamma(P)\in \F\}$ is $t$-intersecting.
\item[\em(3)] $\gamma^{-1}(\F)=\bigsqcup_{F\in\F}\gamma^{-1}(F)$ and $|\gamma^{-1}(\F)|=(b-a)!|\F|$.
\end{enumerate}
\end{Lemma}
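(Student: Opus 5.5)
The plan is to verify the three items directly from the definitions, identifying $\M_b(b,b)$ with the set of bijections $[b]\to[b]$ (equivalently, subsets $P\subseteq[b]\times[b]$ of size $b$ meeting every row and every column exactly once). First I will check that $\gamma$ is well defined. For $P\in\M_b(b,b)$, the set $P\cap([a]\times[b])$ consists of exactly the $a$ pairs $(x,P(x))$ with $x\in[a]$. These pairs are pairwise disjoint edges, since $P$ is injective, so $\gamma(P)\in\M_a(a,b)$.

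For (1), take $P$ with $\gamma(P)=F$. Then $F=P\cap([a]\times[b])\subseteq P$. Conversely, suppose $F\subseteq P$. Since $F\subseteq[a]\times[b]$ has size $a$, we get $F\subseteq P\cap([a]\times[b])$, and the latter also has size $a$, so the two sets are equal.

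For (2), take $P_1,P_2\in\gamma^{-1}(\F)$. Then
\begin{align*}
|P_1\cap P_2|\geq |\gamma(P_1)\cap\gamma(P_2)|\geq t,
\end{align*}
because $\gamma(P_i)\subseteq P_i$ and $\F$ is $t$-intersecting.

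For (3), the disjoint-union decomposition is exactly \eqref{eq:property of fibers} applied to the restriction of $\gamma$ to $\gamma^{-1}(\F)$. Its image is $\F$, because every $F\in\F$ does have a preimage: the counting below shows each fiber is nonempty. It remains to count a single fiber. By (1), $\gamma^{-1}(F)$ is the set of bijections of $[b]$ extending the injection $F:[a]\to[b]$. Such an extension is the same as a bijection from $[b]\setminus[a]$ onto $[b]\setminus\{y:(x,y)\in F\}$, and both of these sets have size $b-a$. Hence $|\gamma^{-1}(F)|=(b-a)!$, and summing over $F\in\F$ gives $|\gamma^{-1}(\F)|=(b-a)!\,|\F|$.

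None of these steps is a real obstacle. The only point needing care is in (1): the inclusion $F\subseteq P$ forces equality with $\gamma(P)$ only because both sets have the same size $a$.
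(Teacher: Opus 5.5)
Your proposal is correct. The paper omits this proof as straightforward, and your argument is the routine verification being omitted: equal cardinality $a$ for (1), $\gamma(P_i)\subseteq P_i$ for (2), and counting bijections from $[b]\setminus[a]$ onto the $b-a$ unused second coordinates for (3).
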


We shall make use of the following theorem of Hall, which provides a necessary and sufficient condition for the existence of a system of distinct representatives. For our purpose, we state it in the following set-theoretic form.
	
\begin{Lemma}[Hall's Marriage Theorem \cite{Hall_1935_marriage}]\label{thm:Hall's theorem-set version}
Let $Y$ be a nonempty set and let $A_1,A_2,\dots,A_m\subseteq Y$. Then there exist pairwise distinct elements $y_1,y_2,\dots,y_m\in Y$ such that $y_i\in A_i$ for $1\le i\leq m$ if and only if for every subset $I\subseteq [m]$, $\left| \bigcup_{i\in I}A_i \right| \geq |I|$.
\end{Lemma}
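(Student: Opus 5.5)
The necessity direction is immediate. If distinct representatives $y_i\in A_i$ exist, then for any $I\subseteq[m]$ the $|I|$ distinct elements $\{y_i:i\in I\}$ all lie in $\bigcup_{i\in I}A_i$, so $\left|\bigcup_{i\in I}A_i\right|\geq |I|$. The work is in sufficiency, which I would prove by strong induction on $m$, following the classical Halmos--Vaughan argument. For $m=1$, Hall's condition gives $|A_1|\geq 1$, and we pick any $y_1\in A_1$.

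For the inductive step, assume Hall's condition holds for $A_1,\dots,A_m$ with $m\geq 2$. I would split into two cases according to whether the condition holds with slack.

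\emph{Case 1: every nonempty proper subset $I\subsetneq[m]$ satisfies $\left|\bigcup_{i\in I}A_i\right|\geq |I|+1$.} Pick any $y_m\in A_m$, which exists since $|A_m|\geq 1$. Set $A_i'=A_i\setminus\{y_m\}$ for $i<m$. For $\varnothing\neq I\subseteq[m-1]$ we have $\left|\bigcup_{i\in I}A_i'\right|\geq |I|+1-1=|I|$. By induction, $A_1',\dots,A_{m-1}'$ have distinct representatives, none equal to $y_m$. Adjoining $y_m$ finishes this case.

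\emph{Case 2: some nonempty proper subset $I_0\subsetneq[m]$ is tight}, that is, $\left|\bigcup_{i\in I_0}A_i\right|=|I_0|$. Let $U=\bigcup_{i\in I_0}A_i$. By induction (since $|I_0|<m$), the sets $A_i$ with $i\in I_0$ have distinct representatives. As there are $|I_0|=|U|$ of them and all lie in $U$, they use up exactly all of $U$. Next, for $i\notin I_0$ set $A_i''=A_i\setminus U$. For $J\subseteq[m]\setminus I_0$, apply Hall's condition to $J\cup I_0$:
\[
\Bigl|\bigcup_{i\in J}A_i''\Bigr|=\Bigl|\bigcup_{i\in J\cup I_0}A_i\Bigr|-|U|\geq |J|+|I_0|-|I_0|=|J|.
\]
Since $|[m]\setminus I_0|<m$, induction gives distinct representatives for the $A_i''$, all outside $U$. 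Combining the two systems yields pairwise distinct representatives for all of $A_1,\dots,A_m$.

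The only step requiring care is the displayed computation in Case 2. It uses the identity $\bigcup_{J}A_i''=\bigl(\bigcup_{J\cup I_0}A_i\bigr)\setminus U$ together with $U\subseteq\bigcup_{J\cup I_0}A_i$, so that the cardinalities subtract exactly. Everything else is routine bookkeeping. Alternatively, one could derive the lemma from K\H{o}nig's theorem or max-flow min-cut on the bipartite incidence graph between $[m]$ and $Y$, but the direct induction above is self-contained.
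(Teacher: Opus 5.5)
Your proof is correct and complete. It is the standard Halmos--Vaughan induction:
\begin{itemize}
\item the split into the ``slack'' and ``tight'' cases is exhaustive, since Hall's condition already gives $\ge |I|$;
\item the key computation in Case~2 is valid because $J\cap I_0=\varnothing$ and $U\subseteq\bigcup_{i\in J\cup I_0}A_i$.
\end{itemize}

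The paper gives no proof of this lemma. It quotes Hall's theorem from his 1935 paper as a black box, and uses it only in the proof of Lemma~\ref{lem:auxiliary_1}, where $Y$ is finite. Your argument therefore just makes the cited result self-contained. It also covers infinite $Y$, which the statement as written allows, because you only ever remove single points or the finite set $U$.

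One small remark: in Case~2, the observation that the representatives for $I_0$ exhaust $U$ is true but not needed. All you use is that they lie in $U$ while the representatives of the sets $A_i''$ lie outside $U$.
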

	
\begin{Lemma}\label{lem:auxiliary_1}
Let $D$ and $Y$ be finite nonempty sets with $|D|\leq |Y|$ and $|Y|\geq 2$. Let $X\subseteq D$, and let $\psi:X\to Y$ be an injection.  Then there exists an injection $\xi:D\to Y$ such that $\xi(d) \neq \psi(d)$ for any $d\in X$.
\end{Lemma}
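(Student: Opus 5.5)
We apply Hall's Marriage Theorem (Lemma \ref{thm:Hall's theorem-set version}) to the sets
\[
A_d=Y\setminus\{\psi(d)\}\ \text{for } d\in X,\qquad A_d=Y\ \text{for } d\in D\setminus X .
\]
A system of distinct representatives $(y_d)_{d\in D}$ with $y_d\in A_d$ gives exactly the required map $\xi(d):=y_d$. It is injective because the representatives are pairwise distinct. It avoids $\psi$ on $X$ because $y_d\neq\psi(d)$ for every $d\in X$. So it suffices to check Hall's condition: $\bigl|\bigcup_{d\in I}A_d\bigr|\geq|I|$ for every $I\subseteq D$.

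For $I=\varnothing$ there is nothing to prove. Now let $I\neq\varnothing$.

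First suppose $|I|\geq 2$. Then $I$ contains two distinct elements $d_1,d_2$. Each of $A_{d_1}$ and $A_{d_2}$ misses at most one point of $Y$, namely $\psi(d_1)$ and $\psi(d_2)$ respectively, when these elements lie in $X$. If both lie in $X$, then $\psi(d_1)\neq\psi(d_2)$ by injectivity of $\psi$, so $\psi(d_1)\in A_{d_2}$ and $\psi(d_2)\in A_{d_1}$. Hence $A_{d_1}\cup A_{d_2}=Y$. If at least one of $d_1,d_2$ lies outside $X$, the corresponding set is already all of $Y$. In every case $\bigcup_{d\in I}A_d=Y$, which has size $|Y|\geq|D|\geq|I|$.

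Next suppose $|I|=1$. Then $|A_d|\geq|Y|-1\geq 1$, using $|Y|\geq 2$.

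Thus Hall's condition holds for all $I$, and Lemma \ref{thm:Hall's theorem-set version} yields the injection $\xi$.

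The only delicate point is where the two hypotheses enter. Injectivity of $\psi$ is what makes two forbidden points cover each other, so any two of the sets $A_d$ already cover $Y$. The hypothesis $|Y|\geq 2$ rules out an empty $A_d$ in the singleton case.

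An elementary alternative avoids Hall entirely. Extend $\psi$ to an injection $\hat\psi:D\to Y$, which is possible since $|D|\leq|Y|$. Then compose with a fixed-point-free permutation $\sigma$ of $Y$, such as a cyclic shift, which exists because $|Y|\geq2$. Setting $\xi=\sigma\circ\hat\psi$ gives an injection with $\xi(d)\neq\hat\psi(d)=\psi(d)$ for all $d\in X$. Either route works; the Hall argument fits the paper's setup, while this second one is shorter.
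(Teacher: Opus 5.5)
Your Hall's-theorem argument is correct and is essentially the paper's own proof: the same sets $A_d$ (namely $Y\setminus\{\psi(d)\}$ on $X$ and $Y$ off $X$), with Hall's condition verified the same way, where any two of the sets already cover $Y$ by injectivity of $\psi$ and a singleton is handled by $|Y|\geq 2$. Your alternative is also valid and more elementary than the paper's route: extend $\psi$ to an injection $D\to Y$ (possible since $|Y\setminus\psi(X)|\geq|D\setminus X|$), then compose with a fixed-point-free permutation of $Y$.
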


\begin{proof}
Let $D=\{d_1,\dots,d_m\}$. For $1\leq i\leq m$, define
\begin{align*}
A_i=\begin{cases}
		Y\setminus\{\psi(d_i)\}, & \text{if } d_i\in X; \\
		Y, & \text{if }  d_i\notin X.
	\end{cases}
\end{align*}
We shall show that for every subset $I\subseteq [m]$, $\left| \bigcup_{i\in I}A_i \right| \geq |I|$. The case $I=\varnothing$ is trivial, so we assume that $I\neq\varnothing$. If there exists $j\in I$ such that $d_j\notin X$, then $A_j=Y$. So $\bigcup_{i\in I}A_i=Y$ and $\left|\bigcup_{i\in I}A_i\right|=|Y|\geq |D|=m\geq |I|$.
		
Now suppose that $d_i\in X$ for every $i\in I$. If $|I|=1$, then $\left|\bigcup_{i\in I}A_i\right|=|Y|-1\geq 2-1=1=|I|$. If $|I|\geq 2$, take two distinct $i_1,i_2\in I$. Then $A_{i_1}=Y\setminus\{\psi(d_{i_1})\}$ and $A_{i_2}=Y\setminus\{\psi(d_{i_2})\}$. Since $\psi:X\to Y$ is an injection, $\psi(d_{i_1})\neq \psi(d_{i_2})$, and hence $\psi(d_{i_1})\in A_{i_2}$, which leads to $A_{i_1}\cup A_{i_2}=Y$. Therefore, $\left|\bigcup_{i\in I}A_i\right|=|Y|\geq |D|=m\geq |I|$.
		
Therefore, for every subset $I\subseteq [m]$, $\left| \bigcup_{i\in I}A_i \right| \geq |I|$. By Lemma \ref{thm:Hall's theorem-set version}, there exist pairwise distinct $y_1,y_2,\dots,y_m\in Y$ such that $y_i\in A_i$ for all $1\leq i\leq m$. Since $|D|\leq |Y|$, we can define an injection $\xi:D\to Y$ by $\xi(d_i)=y_i$ for $1\leq i\leq m$. For any $d_i\in X\subseteq D$, we have $y_i\in Y\setminus\{\psi(d_i)\}$, and so $y_i=\xi(d_i)\neq \psi(d_i)$. Hence $\xi$ is our desired injection.
	\end{proof}

\begin{Lemma}\label{lem:projection}
Let $r,t,k,n_1,n_2,\dots,n_r$ be integers with $r\geq 2$ and $1\leq t\leq k\leq n_1\leq n_2\leq\cdots\leq n_r$. Let $F\in \M_{k}(n_1,n_2,\dots,n_r)$ and let $T\subseteq F$ with $|T|=t$. If $n_r-t\geq 2$, then there exists $F^*\in \M_{k}(n_1,n_2,\dots,n_r)$ such that $F\cap F^*=T$.
\end{Lemma}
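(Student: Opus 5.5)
We construct $F^*$ explicitly. Keep the $t$ edges of $T$, and replace each of the $k-t$ edges of $F\setminus T$ by a new edge that does not lie in $F$. Write $T=\{e_1,\dots,e_t\}$ and $F\setminus T=\{e_{t+1},\dots,e_k\}$, with $e_j=\{x^{(j)}_1,\dots,x^{(j)}_r\}$ and $x^{(j)}_i\in V_i$. For each new edge $f_j$ ($t<j\le k$) we keep the first $r-1$ coordinates, so $f_j\cap V_i=\{x^{(j)}_i\}$ for $i<r$. Then the new edges are automatically vertex-disjoint from each other and from $T$ in parts $V_1,\dots,V_{r-1}$, because $F$ is a matching. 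Only the last coordinate has to be chosen.

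Let $D=\{t+1,\dots,k\}$ and $Y=V_r\setminus\{x^{(1)}_r,\dots,x^{(t)}_r\}$, so $|Y|=n_r-t\ge 2$ by hypothesis and $|D|=k-t\le n_r-t=|Y|$. Let $X=D$ and define $\psi:X\to Y$ by $\psi(j)=x^{(j)}_r$. This is an injection into $Y$, since $F$ is a matching and the vertices $x^{(j)}_r$ with $j>t$ avoid those of $T$. If $D=\varnothing$, i.e.\ $t=k$, take $F^*=F=T$ and we are done. Otherwise Lemma \ref{lem:auxiliary_1} gives an injection $\xi:D\to Y$ with $\xi(j)\ne\psi(j)$ for all $j\in D$. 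Set $f_j=\{x^{(j)}_1,\dots,x^{(j)}_{r-1},\xi(j)\}$ and $F^*=T\cup\{f_{t+1},\dots,f_k\}$.

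We then verify the two properties.

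\emph{$F^*$ is a $k$-matching.} The $f_j$ are pairwise disjoint, and disjoint from $T$. In parts $V_1,\dots,V_{r-1}$ this is inherited from $F$. In $V_r$ it holds because $\xi$ is injective with values in $Y$, which avoids the $V_r$-vertices of $T$.

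\emph{$F\cap F^*=T$.} Clearly $T\subseteq F\cap F^*$. For the reverse inclusion, suppose some $f_j\in F$. Since $f_j$ contains $x^{(j)}_1\in e_j$ and $F$ is a matching, necessarily $f_j=e_j$. That would force $\xi(j)=x^{(j)}_r=\psi(j)$, a contradiction.

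The only real subtlety is guaranteeing $|Y|\ge 2$ so that Lemma \ref{lem:auxiliary_1} applies; this is exactly where the hypothesis $n_r-t\ge 2$ is used. Without it, for example when $k-t=1$ and $|Y|=1$, no replacement choice exists. Everything else is bookkeeping.
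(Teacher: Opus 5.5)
Your proposal is correct and follows the paper's proof essentially step for step. Like the paper, you keep $T$, keep the first $r-1$ coordinates of each edge of $F\setminus T$, and apply Lemma \ref{lem:auxiliary_1} with $X=D$ and $Y=V_r\setminus V_r(T)$ to re-choose last coordinates different from the old ones. Your separate treatment of $t=k$ is a small improvement, since Lemma \ref{lem:auxiliary_1} assumes $D$ is nonempty and the paper applies it without comment.
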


\begin{proof}
For convenience, we represent each edge $\{x_1,\dots,x_r\}$ with $x_i\in V_i$ in $\mathcal{K}(n_1,\dots,n_r)$ by the ordered tuple $(x_1,\dots,x_r)$, where $V_i$ is the $i$-th part of the vertex set with $|V_i|=n_i$. Let $D=F\setminus T$. Define $V_r(T) = \{x_r\in V_r: (x_1,x_2,\dots,x_r)\in T\}$. Since $T$ is a matching, the $t$ edges of $T$ use $t$ distinct vertices in $V_r$, so $|V_r(T)|=t$. Let $Y=V_r\setminus V_r(T)$. Then $k-t=|D|\leq |Y|=n_r-t$ and $|Y|\geq 2$.
		
For each edge ${\bm e}=(x_1,x_2,\dots,x_r)\in D$, define $\psi({\bm e})=x_r$. Since $F$ is a matching, $\psi$ is an injection from $D$ to $Y$. By Lemma \ref{lem:auxiliary_1}, there exists an injection $\xi:D\to Y$ such that $\xi({\bm e})\neq \psi({\bm e})$ for all ${\bm e}\in D$. Now for each edge ${\bm e}=(x_1,x_2,\dots,x_{r-1},x_r)\in D$, define a new edge ${\bm e}^* = (x_1,x_2,\dots,x_{r-1},\xi({\bm e}))$ and let $F^* = T \cup \{{\bm e}^*:{\bm e}\in D\}$. The last coordinates of the new edges are pairwise distinct and lie in $Y$, while their other coordinates are inherited from $D$ and hence are disjoint from $T$ and from each other. Therefore $F^*\in\M_{k}(n_1,n_2,\dots,n_r)$, and clearly $T\subseteq F^*$. It remains to check $F\cap F^*=T$. For any $\mathbf e\in D$, the new edge $\mathbf e^*$ shares the first coordinate with $\mathbf e$. Since $F$ is a matching, the only edge in $F$ that can share a vertex with $\mathbf e$ is $\mathbf e$ itself. But $\xi(\mathbf e)\ne x_r$, so $\mathbf e^*\ne \mathbf e$; hence $\mathbf e^*\notin F$. Thus no new edge belongs to $F$, and the edges of $T$ are retained, so
$F\cap F^*=T$.
\end{proof}
	
Now we are ready to prove Theorem \ref{thm:main-result-injections}.

\begin{proof}[\bf Proof of Theorem \ref{thm:main-result-injections}]
If $k=n$, then the result follows immediately from Theorem \ref{thm:KLMS}. If $t=k$, then any $k$-intersecting subfamily of $\M_{k}(k,n)$ contains at most one element, and hence the conclusion is trivial. Now we assume that $1\leq t<k<n$, which implies $n-t\geq 2$.
		
Since $\F\subseteq\M_{k}(k,n)$ is $t$-intersecting, by Lemma \ref{lem:fibers}(2) with $a=k$ and $b=n$, $\gamma^{-1}(\F)\subseteq\M_{n}(n,n)$ is $t$-intersecting. Since $n\geq C_0t$, by Theorem \ref{thm:KLMS}(1), $|\gamma^{-1}(\F)|\leq (n-t)!$. Combining this with Lemma \ref{lem:fibers}(3), we have
\begin{align*}
    |\F| = \frac{|\gamma^{-1}(\F)|}{(n-k)!} \leq \frac{(n-t)!}{(n-k)!}.
\end{align*}
Now suppose that $|\F|=\frac{(n-t)!}{(n-k)!}$. Then $|\gamma^{-1}(\F)|=(n-t)!$. Since $(n-t)!>0.75(n-t)!$, by Theorem \ref{thm:KLMS}(2), there exists some $T\in\M_{t}(n,n)$ such that $\gamma^{-1}(\F)=\mathcal{P}_n(T)=\{P\in\M_{n}(n,n):T\subseteq P\}$. It remains to show that $\F=\{F\in\M_{k}(k,n):T\subseteq F\}$.

Given any $F\in\F$, we will prove that $T\subseteq F$. Suppose not. Then there exists $F^*\in\F$ such that $T\not\subseteq F^*$. By Lemma \ref{lem:fibers}(1), there exists $P^*\in\gamma^{-1}(\F)$ such that $F^*\subseteq P^*$. Since every element of $\gamma^{-1}(\F)$ contains $T$, we have $T\subseteq P^*$. By Lemma \ref{lem:projection} with $r=2$, there exists $P'\in\M_{n}(n,n)$ such that $P^*\cap P'=T$. Since $T\subseteq P'$, we have $P'\in\gamma^{-1}(\F)$, and so there exists $F'\in\F$ with $\gamma(P')=F'$ and $F'\subset P'$. Hence $F^*\cap F'\subseteq P^*\cap P'=T$. Since $T\not\subseteq F^*$, this gives $|F^*\cap F'|<t$, a contradiction to the $t$-intersecting property of $\F$. Thus for any $F\in\F$, we have $T\subseteq F$, and hence $\F\subseteq \{F\in\M_{k}(k,n):T\subseteq F\}$.
		
On the other hand, the family $\{F\in\M_{k}(k,n):T\subseteq F\}$ is a $t$-star of $\M_{k}(k,n)$ with size $\frac{(n-t)!}{(n-k)!}$ and equals $|\F|$. This implies that $\F=\{F\in\M_{k}(k,n):T\subseteq F\}$.
\end{proof}

\section{Proof of Theorem \ref{thm:main-result} for $r=2$}\label{section_GP}

Similar to Section \ref{section_preliminaries}, an edge in $\mathcal{K}(n_1,n_2)$ is viewed as an ordered pair $(x,y)\in [n_1]\times[n_2]$. An element of $\M_{k}(n_1,n_2)$ is a set $F\subseteq [n_1]\times[n_2]$ of size $k$. A natural idea is to try to obtain a result similar to Lemma \ref{lem:fibers} using the concept of fibers. However, distinct $F_1,F_2\in \M_{k}(n_1,n_2)$ may have the same extension in $\M_{n_2}(n_2,n_2)$. For example, take $F_1=\{(1,1),(2,2)\}\in\M_{2}(3,4)$ and $F_2=\{(1,1),(3,3)\}\in\M_{2}(3,4)$. $F_1$ and $F_2$ are both contained in $P=\{(1,1),(2,2),(3,3),(4,4)\}$ $\in\M_{4}(4,4)$. Hence for distinct $F_1,F_2\in\F\subseteq\M_{k}(n_1,n_2)$ with $k\leq n_1\leq n_2$, the families $\{P\in \M_{n_2}(n_2,n_2):F_1\subseteq P\}$ and $\{P\in \M_{n_2}(n_2,n_2):F_2\subseteq P\}$ may not be disjoint. This leads to difficulties in counting. A modification is first to bridge $\M_{k}(n_1,n_2)$ and $\M_{n_1}(n_1,n_2)$, rather than $\M_{n_2}(n_2,n_2)$. For $F\in \M_{k}(n_1,n_2)$, we define
\begin{align*}
	\operatorname{Ext}_{(n_1,n_2)}(F) = \{G\in \M_{n_1}(n_1,n_2):F\subseteq G\}.
\end{align*}
For $\F\subseteq \M_{k}(n_1,n_2)$, define the family
\begin{align}\label{eq:def of mathcal_G}
	\mathcal{G} = \bigcup_{F\in\F}\operatorname{Ext}_{(n_1,n_2)}(F) \subseteq \M_{n_1}(n_1,n_2).
\end{align}
Furthermore, for every $G\in\mathcal{G}$, define
\begin{align}\label{eq:def_mathcal_F_G}
	\F_G=\{F\in \F:F\subseteq G\}\subseteq \F.
\end{align}

\begin{Lemma}\label{lem:relation_M_k and M_n_1}
Let $\F\subseteq\M_{k}(n_1,n_2)$ be a $t$-intersecting family with $1\leq t\leq k\leq n_1\leq n_2$. Let $\mathcal{G}$ and $\F_{G}$ be as in \eqref{eq:def of mathcal_G} and \eqref{eq:def_mathcal_F_G}, respectively. Then the following hold.
\begin{enumerate}
\item[\em(1)] $\mathcal{G}\subseteq\M_{n_1}(n_1,n_2)$ is $t$-intersecting.
\item[\em(2)] For any $G\in\mathcal{G}$, $\F_{G}\subseteq \F$ is $t$-intersecting, and $\F_{G}$ is isomorphic to a $t$-intersecting subfamily of $\binom{[n_1]}{k}$.
\item[\em(3)] $|\operatorname{Ext}_{(n_1,n_2)}(F)|=\frac{(n_2-k)!}{(n_2-n_1)!}$ for any $F\in\F$. Furthermore, $\sum_{G\in\mathcal{G}}|\F_G|=\frac{(n_2-k)!}{(n_2-n_1)!}|\F|$.
\end{enumerate}
\end{Lemma}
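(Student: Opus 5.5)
The three parts are essentially direct verifications, and I would take them in order: (1) and (2) by set inclusions, (3) by a double count.

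For (1), take $G_1,G_2\in\mathcal{G}$. By \eqref{eq:def of mathcal_G} there are $F_1,F_2\in\F$ with $F_i\subseteq G_i$. Hence
\[
|G_1\cap G_2|\geq |F_1\cap F_2|\geq t,
\]
so $\mathcal{G}$ is $t$-intersecting.

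For (2), $\F_G$ is $t$-intersecting simply because it is a subfamily of $\F$. For the isomorphism, note that $G\in\M_{n_1}(n_1,n_2)$ is a perfect matching of the part $[n_1]$. So every $x\in[n_1]$ lies in exactly one edge $(x,g(x))$ of $G$, where $g:[n_1]\to[n_2]$ is an injection. Define $\phi(F)=\{x\in[n_1]:(x,y)\in F\}$ for $F\subseteq G$, the projection to the first coordinate. Then:
\begin{itemize}
\item $\phi$ is injective on $k$-subsets of $G$, since $F=\{(x,g(x)):x\in\phi(F)\}$;
\item $|\phi(F)|=k$ for every such $F$;
\item $|\phi(F_1)\cap\phi(F_2)|=|F_1\cap F_2|$ for $F_1,F_2\subseteq G$.
\end{itemize}
Therefore $\phi(\F_G)\subseteq\binom{[n_1]}{k}$ is $t$-intersecting and isomorphic to $\F_G$ as a set system with the intersection structure preserved.

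For (3), fix $F\in\F$. Extending $F$ to some $G\in\M_{n_1}(n_1,n_2)$ amounts to matching the $n_1-k$ uncovered vertices of $[n_1]$ injectively into the $n_2-k$ uncovered vertices of $[n_2]$. The number of such extensions is
\[
(n_2-k)(n_2-k-1)\cdots(n_2-n_1+1)=\frac{(n_2-k)!}{(n_2-n_1)!}.
\]
Next I double count the incidence pairs $(F,G)$ with $F\in\F$, $G\in\mathcal{G}$ and $F\subseteq G$. Summing over $G$ gives $\sum_{G\in\mathcal{G}}|\F_G|$. Summing over $F$ gives $\sum_{F\in\F}|\operatorname{Ext}_{(n_1,n_2)}(F)|$, and every extension of every $F\in\F$ lies in $\mathcal{G}$ by definition, so no extension is missed. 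This sum is $\frac{(n_2-k)!}{(n_2-n_1)!}|\F|$.

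None of these steps is really an obstacle. The only points needing care are the use of $G$ saturating $[n_1]$ in (2), which makes the projection injective, and checking in (3) that the double count runs over all of $\operatorname{Ext}_{(n_1,n_2)}(F)$, which holds because $\mathcal{G}$ is defined as the union of these sets.
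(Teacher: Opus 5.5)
Your proposal is correct and follows essentially the same approach as the paper: the paper also treats (1) and (2) as immediate from the definitions, which you verify explicitly via $F_1\cap F_2\subseteq G_1\cap G_2$ and the first-coordinate projection within $G$. For (3) the paper counts the extensions as $\binom{n_2-k}{n_1-k}(n_1-k)!$, which equals your falling factorial, and then double counts the incidence pairs $(F,G)$ exactly as you do.
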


\begin{proof}
(1) and (2) hold immediately by the definitions of $\mathcal{G}$ and $\F_G$.

(3) Every $F\in\F$ uses $k$ distinct values of $[n_1]$ and $k$ distinct values of $[n_2]$. To extend $F$ to an element in $\M_{n_1}(n_1,n_2)$, it suffices to fill the remaining $n_1-k$ values in $[n_1]$ by the remaining $n_2-k$ values in $[n_2]$. The number of such choices equals $\binom{n_2-k}{n_1-k}\cdot(n_1-k)!=\frac{(n_2-k)!}{(n_2-n_1)!}$. Thus for any $F\in \F$, $|\operatorname{Ext}_{(n_1,n_2)}(F)|=\frac{(n_2-k)!}{(n_2-n_1)!}$.

Now let $\mathcal{I}(\F,\mathcal{G}) = \{(F,G):F\in\F,G\in\mathcal{G},F\subseteq G\}$. Counting the size of $\mathcal{I}(\F,\mathcal{G})$, we have
\begin{align*}
\sum_{G\in\mathcal{G}} |\F_G|
=|\mathcal{I}(\F,\mathcal{G})|
= \sum_{F\in\F}\left|\operatorname{Ext}_{(n_1,n_2)}(F)\right|
= \frac{(n_2-k)!}{(n_2-n_1)!}|\F|.
\end{align*}
This completes the proof.
\end{proof}

To give an upper bound for the size of a $t$-intersecting family $\F\subseteq\M_{k}(n_1,n_2)$, by Lemma \ref{lem:relation_M_k and M_n_1}(3), it suffices to estimate the size of $\F_G$ for every $G\in \mathcal G$. We need the concept of $t$-covers. A {\em $t$-cover} of $\F_G$ is a subset $S\subseteq [n_1]\times[n_2]$ such that $|S\cap F|\geq t$ for all $F\in \F_G$. The {\em $t$-covering number} $\tau(\F_G)$ of $\F_G$ is defined as the minimum size of $S$ such that $S$ is a $t$-cover of $\F_{G}$, i.e.,
	\begin{align}\label{eq:definition_covering number}
		\tau(\F_G) = \min_{S\subseteq[n_1]\times[n_2]}\{|S|:|S\cap F|\geq t~\text{for all $F\in\F_G$}\}.
	\end{align}
	Note that $\tau(\F_G)=t$ if and only if all elements of $\F_G$ contain a common $t$-subset of $[n_1]\times[n_2]$. In this setting, we can decompose $\mathcal{G}$ as (recall that ``$\sqcup$" represents the disjoint union)
\begin{align}\label{eq:decomposition of mathcal_G}
\mathcal{G}=\mathcal{G}_0 \sqcup \mathcal{G}_1,~\text{where}~
	\begin{cases}
		\mathcal{G}_0=\{G\in\mathcal{G}:\tau(\F_G)=t\};\\
		\mathcal{G}_1=\{G\in\mathcal{G}:\tau(\F_G)\geq t+1\}.
	\end{cases}
\end{align}
Write
$$N_1:=\binom{n_1-t}{k-t}\quad\text{and}\quad N_2:=\frac{(n_2-t)!}{(n_2-n_1)!}.$$

\begin{Lemma}\label{lem:sum_G leq N1N2}
Let $\F\subseteq\M_{k}(n_1,n_2)$ be a $t$-intersecting family with $1\leq t< k\leq n_1\leq n_2$ and $n_2\geq R(t)$, where $R(t)=\max\{C_0(t+1),t+4((t+1)^t-1)\}$. Let $\mathcal{G}$, $\F_{G}$ and $\mathcal{G}_1$ be as in (\ref{eq:def of mathcal_G}), (\ref{eq:def_mathcal_F_G}) and (\ref{eq:decomposition of mathcal_G}), respectively. Then the following hold.
\begin{enumerate}
\item[\em(1)] If $\mathcal{G}_1=\varnothing$, then $\sum_{G\in\mathcal{G}}|\F_G| \leq N_1 N_2$. Furthermore, if equality holds, then $|\mathcal{G}|=N_2$, and $|\F_G|=N_1$ for every $G\in\mathcal{G}$.
\item[\em(2)] If $\mathcal{G}_1\neq \varnothing$, then $\sum_{G\in\mathcal{G}}|\F_G| < N_1 N_2$. Moreover,
\begin{enumerate}
	\item[\em(2-1)] $|G^*\cap G|\geq t+1$ for any $G^*\in\mathcal{G}_1$ and any $G\in\mathcal{G}$;
	\item[\em(2-2)] $|\mathcal{G}_1|\leq \dfrac{(n_2-t-1)!}{(n_2-n_1)!}$ and $|\mathcal{G}|<0.75N_2$;
	\item[\em(2-3)] if $G\in\mathcal{G}_0$, then $|\F_G|\leq N_1$; if $G\in\mathcal{G}_1$, then $|\F_G|\leq (t+1)^tN_1$.
\end{enumerate}
\end{enumerate}
\end{Lemma}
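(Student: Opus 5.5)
The plan is to view each $\F_G$ as a $t$-intersecting family of $k$-subsets of the $n_1$-set $G$. The family $\mathcal{G}$ itself is handled by lifting to permutations via Lemma \ref{lem:fibers} with $a=n_1$, $b=n_2$, and then applying Theorem \ref{thm:KLMS}. Since $\mathcal{G}\subseteq\M_{n_1}(n_1,n_2)$ is $t$-intersecting by Lemma \ref{lem:relation_M_k and M_n_1}(1), the lifted family $\gamma^{-1}(\mathcal{G})\subseteq\M_{n_2}(n_2,n_2)$ is $t$-intersecting and has size $(n_2-n_1)!|\mathcal{G}|$. Because $n_2\ge C_0(t+1)\ge C_0t$, Theorem \ref{thm:KLMS}(1) gives $|\mathcal{G}|\le N_2$.

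\emph{Part (1), and the first half of (2-3).} Let $G\in\mathcal{G}_0$, so all members of $\F_G$ contain a common $t$-set $T$. Then $T\subseteq G$, and each member is $T$ together with $k-t$ of the remaining $n_1-t$ edges of $G$. Hence $|\F_G|\le N_1$. If $\mathcal{G}_1=\varnothing$, this gives $\sum_{G}|\F_G|\le N_1|\mathcal{G}|\le N_1N_2$. Equality forces $|\mathcal{G}|=N_2$ and $|\F_G|=N_1$ for every $G$.

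\emph{Part (2-1).} Let $G^*\in\mathcal{G}_1$ and $G\in\mathcal{G}$, and pick $F\in\F$ with $F\subseteq G$. Every $F'\in\F_{G^*}$ satisfies $|F'\cap F|\ge t$ and $F'\cap F\subseteq G^*\cap G$. If $|G^*\cap G|\le t$, then $G^*\cap G$ (enlarged to size $t$ if necessary) would be a $t$-cover of $\F_{G^*}$ of size $t$, contradicting $\tau(\F_{G^*})\ge t+1$. Hence $|G^*\cap G|\ge t+1$.

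\emph{Part (2-2).} By (2-1), $\mathcal{G}_1$ is $(t+1)$-intersecting. Lemma \ref{lem:fibers} together with Theorem \ref{thm:KLMS}(1) for $t+1$ (valid since $n_2\ge C_0(t+1)$) gives $|\mathcal{G}_1|\le (n_2-t-1)!/(n_2-n_1)!$. For $|\mathcal{G}|<0.75N_2$, suppose instead $|\mathcal{G}|\ge 0.75N_2$. Then $|\gamma^{-1}(\mathcal{G})|\ge0.75(n_2-t)!$, and Theorem \ref{thm:KLMS}(2) yields a $T\in\M_t(n_2,n_2)$ contained in every element of $\gamma^{-1}(\mathcal{G})$. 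Take $P^*\supseteq G^*$ with $P^*\in\gamma^{-1}(\mathcal{G})$; then $T\subseteq P^*$. Lemma \ref{lem:projection} gives $P'$ with $P^*\cap P'=T$, and $P'\in\gamma^{-1}(\mathcal{G})$ as in the proof of Theorem \ref{thm:main-result-injections}. Setting $G'=\gamma(P')\in\mathcal{G}$, we get $|G^*\cap G'|\le t$, contradicting (2-1).

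\emph{The strict inequality in (2).} Split the sum over $\mathcal{G}_0$ and $\mathcal{G}_1$ and use (2-2) and (2-3):
\[
\sum_{G\in\mathcal{G}}|\F_G|\le N_1\bigl(|\mathcal{G}|+((t+1)^t-1)|\mathcal{G}_1|\bigr)<N_1\Bigl(0.75N_2+((t+1)^t-1)\tfrac{N_2}{n_2-t}\Bigr).
\]
Since $n_2\ge t+4((t+1)^t-1)$, we have $((t+1)^t-1)/(n_2-t)\le 1/4$, so the right-hand side is at most $N_1N_2$.

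\emph{Second half of (2-3): the main obstacle.} The claim is that a $t$-intersecting family of $k$-subsets of an $n_1$-set with $t$-covering number at least $t+1$ has size at most $(t+1)^tN_1$. I would try a branching argument. Fix $A\in\F_G$ and grow a chain $\varnothing=S_0\subset S_1\subset\dots\subset S_t$ of sets contained in each member $F$. At each step, since $S_i$ is not a $t$-cover, choose a witness member $W$ and branch on an element of $W$. The difficulty is keeping the branching factor at $t+1$ rather than $k$. My first attempt is to exploit $\tau\ge t+1$ by using a $(t+1)$-set-based witness, for instance a minimal $t$-cover of size $t+1$ when one exists, so that each branch point has only $t+1$ options. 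At the end one counts the members containing a fixed $t$-set, which gives at most $N_1$. If this fails, I would instead use the Frankl-type bound that the members containing a $t$-set $T$ but missing a fixed member number at most $t\binom{n_1-t-1}{k-t-1}$, and sum these contributions.
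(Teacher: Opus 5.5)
You have (1), (2-1), the bound on $|\mathcal{G}_1|$, and the closing summation right, and they agree with the paper. Two steps, however, have genuine gaps.

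\textbf{The bound $|\mathcal{G}|<0.75N_2$ in (2-2).} You conclude $P'\in\gamma^{-1}(\mathcal{G})$ ``as in the proof of Theorem~\ref{thm:main-result-injections}''. There, this worked only because $|\gamma^{-1}(\F)|=(n-t)!$ forced $\gamma^{-1}(\F)$ to \emph{equal} the $t$-star $\mathcal{P}_n(T)$. Here you know only $\gamma^{-1}(\mathcal{G})\subseteq\mathcal{P}_{n_2}(T)$ and $|\gamma^{-1}(\mathcal{G})|\ge 0.75(n_2-t)!$. So the one permutation $P'$ supplied by Lemma~\ref{lem:projection} can simply lie outside $\gamma^{-1}(\mathcal{G})$, and no contradiction follows. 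Indeed, (2-1) shows $P'\notin\gamma^{-1}(\mathcal{G})$, but one missing permutation only gives $|\gamma^{-1}(\mathcal{G})|<(n_2-t)!$.

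The missing idea is a count. By (2-1), \emph{every} $P\in\mathcal{P}_{n_2}(T)$ with $P\cap P^*=T$ is excluded from $\gamma^{-1}(\mathcal{G})$, since otherwise $|\gamma(P)\cap G^*|\le|P\cap P^*|=t$. These $P$ are in bijection with the derangements of the $n_2-t$ points not covered by $T$. By Lemma~\ref{lem:derangements} there are more than $0.25(n_2-t)!$ of them, so $|\gamma^{-1}(\mathcal{G})|\le (n_2-t)!-D_{n_2-t}<0.75(n_2-t)!$. This is exactly how the paper gets the bound.

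\textbf{The bound $|\F_G|\le (t+1)^tN_1$ for $G\in\mathcal{G}_1$.} You leave this unproved, and the branching scheme you sketch does not deliver it. A $t$-cover of size $t+1$ exists only when $\tau(\F_G)=t+1$; in that case you get $(t+1)N_1$. When $\tau(\F_G)=c$ is larger, branching produces factors like $\binom{c}{t}$ or $(k-t+1)^{c-t}$ (compare Lemma~\ref{lemma:general recursion}). These grow with $k$ and are not bounded by $(t+1)^t$ in general.

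The paper's argument is much simpler and does not use $\tau(\F_G)\ge t+1$ at all. $\F_G$ is a $t$-intersecting family of $k$-subsets of the $n_1$-set $G$. If $n_1\ge (t+1)(k-t+1)$, Theorem~\ref{thm:classical EKR} gives $|\F_G|\le N_1$. If $n_1<(t+1)(k-t+1)$, the trivial bound gives
\[
\frac{|\F_G|}{N_1}\le\frac{\binom{n_1}{k}}{\binom{n_1-t}{k-t}}=\prod_{i=0}^{t-1}\frac{n_1-i}{k-i}\le\Bigl(\frac{n_1}{k-t+1}\Bigr)^t<(t+1)^t .
\]

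Since the strict inequality in (2) rests on both of these bounds, your proposal as written does not establish it.
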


To prove Lemma \ref{lem:sum_G leq N1N2}, we need the concept of derangements. Let $S$ be an $m$-element set. A {\em derangement} on $S$ is a permutation $\sigma:S\to S$ with no fixed points, i.e., $\sigma(s)\neq s$ for all $s\in S$. Denote the number of derangements on $S$ by $D_m$.
\begin{Lemma}\label{lem:derangements}
    If $m\geq 2$, then $D_m > 0.25m!$
\end{Lemma}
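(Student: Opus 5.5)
We plan to use the standard inclusion–exclusion formula for derangements,
\begin{align*}
D_m = m!\sum_{j=0}^{m}\frac{(-1)^j}{j!},
\end{align*}
and to bound the alternating sum from below. For $m=2$ we have $D_2=1$, and $1>0.25\cdot 2=0.5$. For $m=3$ we have $D_3=2$, and $2>0.25\cdot 6=1.5$. So it suffices to treat $m\geq 4$, or else to give a uniform argument.

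For the uniform argument, we use that the partial sums of the alternating series $\sum_{j\ge 0}(-1)^j/j!$ have terms decreasing in absolute value. Hence each partial sum ending with a subtracted term is a lower bound for all later partial sums. In particular, for every $m\geq 3$,
\begin{align*}
\sum_{j=0}^{m}\frac{(-1)^j}{j!}\;\geq\; 1-1+\frac12-\frac16=\frac13>\frac14.
\end{align*}
Here we use that the tail $\sum_{j=4}^{m}(-1)^j/j!$ is nonnegative, since it is an alternating sum starting with a positive term whose terms decrease in absolute value. For $m=2$ the sum equals $\tfrac12>\tfrac14$ directly. Multiplying by $m!$ gives $D_m>0.25\,m!$.

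An alternative route avoiding series is the recurrence $D_m=(m-1)(D_{m-1}+D_{m-2})$ with $D_1=0$ and $D_2=1$. From it we would prove by induction the stronger claim $D_m\geq m!/3$ for $m\geq 3$: the recurrence gives $D_m\ge (m-1)\big((m-1)!/3+(m-2)!/3\big)=m!/3$, and the base cases $D_3=2$ and $D_4=9\geq 8$ can be checked directly. This covers $m\geq 3$, and $m=2$ is checked by hand. The only point needing care is justifying the sign of the alternating tail, or equivalently choosing the base cases of the induction correctly; beyond that there is no real obstacle.
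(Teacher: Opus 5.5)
Your proof is correct, and your main argument takes a different route from the paper's. The paper never uses the closed formula. It checks $D_2=1$ and $D_3=2$, then runs a strong induction on the recurrence $D_m=(m-1)(D_{m-1}+D_{m-2})$, using the identity $(m-1)\bigl(c\,(m-1)!+c\,(m-2)!\bigr)=c\,m!$ with $c=1/4$. Your ``alternative route'' is essentially the paper's proof with $1/3$ in place of $1/4$. Your extra base case $m=4$ is harmless but unnecessary: $D_2=1\geq 2!/3$ already holds, so $m=2,3$ would serve as base cases exactly as in the paper.

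Your inclusion--exclusion argument instead gets the bound in one step and with a sharper constant. $D_m/m!$ is a partial sum of $\sum_j(-1)^j/j!$, and partial sums ending in a subtracted term bound all later ones from below, so $D_m/m!\geq 1/3$ for $m\geq 3$. It also makes visible that $D_m/m!\to 1/e$. The price is reliance on the inclusion--exclusion formula and the alternating-series estimate, whereas the paper needs only the recurrence and two trivial base cases. The weaker constant $1/4$ costs the paper nothing, since it is exactly what the $0.75$ threshold in Theorem~\ref{thm:KLMS}(2) requires where the lemma is applied.
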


\begin{proof}
For $m=2$ and $m=3$, the assertion follows from $D_2=1$ and $D_3=2$, respectively. For $m\geq 4$, we use the recurrence $D_m=(m-1)(D_{m-1}+D_{m-2})$ (see \cite[Example 14.1]{vanlint_a course in comb}) and induction. Assume the lemma holds for $m-1$ and $m-2$ with $m\ge 4$. Then $D_{m-1}>0.25(m-1)!$ and $D_{m-2}>0.25(m-2)!$, so
\begin{align*}
 D_m &= (m-1)(D_{m-1}+D_{m-2}) > (m-1)(0.25(m-1)!+0.25(m-2)!)\\
    &= 0.25(m-1)((m-1)(m-2)!+(m-2)!) = 0.25m!.
\end{align*}
Thus $D_m > 0.25 m!$ for all $m\geq 2$.
\end{proof}

\begin{proof}[\bf Proof of Lemma \ref{lem:sum_G leq N1N2}]
(1) If $\mathcal{G}_1=\varnothing$, then $\mathcal{G}=\mathcal{G}_0$, and so for every $G\in\mathcal{G}$, all elements of $\F_G$ contain a common $t$-subset $S_G$. Hence $\F_G\subseteq \{F\subseteq G: S_G\subseteq F, |F|=k\}$, which implies $|\F_G| \leq \binom{n_1-t}{k-t} = N_1$ for every $G\in\mathcal{G}$. Thus, $\sum_{G\in\mathcal{G}}|\F_G|\leq |\mathcal{G}|N_1$. By Lemma \ref{lem:relation_M_k and M_n_1}(1), $\mathcal{G}\subseteq \M_{n_1}(n_1,n_2)$ is $t$-intersecting, so applying Theorem \ref{thm:main-result-injections}, we have $|\mathcal{G}|\leq \frac{(n_2-t)!}{(n_2-n_1)!}=N_2$. Therefore, $\sum_{G\in\mathcal{G}}|\F_G|\leq N_1 N_2$. Furthermore, if equality holds, then $|\mathcal{G}|=N_2$, and $|\F_G|=N_1$ for every $G\in\mathcal{G}$.

(2) In the following, we always assume that $\mathcal{G}_1\neq \varnothing$.
    			
(2-1) For any $G\in\mathcal{G}$, by (\ref{eq:def of mathcal_G}), there exists $F_0\in\F$ such that $F_0\subseteq G$. For any $G^*\in\mathcal{G}_1$, take an arbitrary $F^*\in\F_{G^*}\subseteq \F$. By (\ref{eq:def_mathcal_F_G}), $F^*\in\F$ and $F^*\subseteq G^*$. Since $\F$ is $t$-intersecting, $|F^*\cap F_0|\geq t$. Since $F^*\subseteq G^*$, $F^*\cap F_0=(F^*\cap G^*)\cap F_0=(G^*\cap F_0)\cap F^*$ and so $|(G^*\cap F_0)\cap F^*|=|F^*\cap F_0|\geq t$ for any $F^*\in\F_{G^*}$, which implies that $G^*\cap F_0$ is a $t$-cover of $\F_{G^*}$. Note that $\tau(\F_{G^*})\geq t+1$ because $G^*\in\mathcal{G}_1$. Hence for any $G\in\mathcal{G}$ and $G^*\in\mathcal{G}_1$,
\begin{align*}
    t+1 \leq \tau(\F_{G^*}) \leq |G^*\cap F_0|\leq |G^*\cap G|.
\end{align*}

(2-2) We first prove that $\mathcal{G}_1$ is $(t+1)$-intersecting. If $|\mathcal{G}_1|=1$, since $t<k$, $\mathcal{G}_1$ is $(t+1)$-intersecting. Suppose $|\mathcal{G}_1|\geq 2$. For any $G_1,G_2\in\mathcal{G}_1$, by (2-1), we have $|G_1\cap G_2|\geq t+1$, so $\mathcal{G}_1$ is $(t+1)$-intersecting. Since $\mathcal{G}_1\subseteq\M_{n_1}(n_1,n_2)$ and $n_2\geq R(t)\geq C_0(t+1)$, by Theorem \ref{thm:main-result-injections}, for the $(t+1)$-intersecting family $\mathcal{G}_1$, we have $|\mathcal{G}_1|\leq \dfrac{(n_2-t-1)!}{(n_2-n_1)!}$.
    			
Next we show that $|\mathcal{G}|< 0.75N_2$. Define a mapping
\begin{align*}
\gamma:~\M_{n_2}(n_2,n_2)\to \M_{n_1}(n_1,n_2),\ \ \text{where }\gamma(P)=P\cap ([n_1]\times[n_2])\ \ \text{for }P\in\M_{n_2}(n_2,n_2).
\end{align*}
By Lemma \ref{lem:relation_M_k and M_n_1}(1), $\mathcal{G}\subseteq \M_{n_1}(n_1,n_2)$ is $t$-intersecting. Then by Lemma \ref{lem:fibers}(2) and (3),
\begin{align*}
    \gamma^{-1}(\mathcal{G}) = \{P\in \M_{n_2}(n_2,n_2):\gamma(P)\in\mathcal{G}\}
\end{align*}
is also $t$-intersecting and $|\gamma^{-1}(\mathcal{G})|=(n_2-n_1)!|\mathcal{G}|$.
    			
If $\gamma^{-1}(\mathcal{G})$ is not contained in any $t$-star of $\M_{n_2}(n_2,n_2)$, then since $n_2\geq R(t)\geq C_0(t+1)>C_0t$, by Theorem \ref{thm:KLMS}(2) and its contrapositive, $|\gamma^{-1}(\mathcal{G})|<0.75(n_2-t)!$. Combining $N_2=\frac{(n_2-t)!}{(n_2-n_1)!}$ and $|\gamma^{-1}(\mathcal{G})|=(n_2-n_1)!|\mathcal{G}|$, we have $|\mathcal{G}|<0.75N_2$.
    			
If $\gamma^{-1}(\mathcal{G})$ is contained in some $t$-star of $\M_{n_2}(n_2,n_2)$, then suppose that
\begin{align}\label{eq:if mathcal_G is contained in a t-star}
    \gamma^{-1}(\mathcal{G}) \subseteq \mathcal{P}_{n_2}(T) := \{P\in\M_{n_2}(n_2,n_2):T\subseteq P\}
\end{align}
for a certain $T\in\M_{t}(n_2,n_2)$. Given any $G^*\in\mathcal{G}_1$, choose $P^*\in\gamma^{-1}(\mathcal{G})$ such that $G^*\subseteq P^*$. Then by (\ref{eq:if mathcal_G is contained in a t-star}),
$T\subseteq P^*$. Define
\begin{align}\label{eq:def of mathcal_D}
\mathcal{D}=\{P\in \mathcal{P}_{n_2}(T): P\cap (P^*\setminus T)=\varnothing\}.
\end{align}
We claim that
$$\gamma^{-1}(\mathcal{G})\subseteq \mathcal{P}_{n_2}(T)\setminus \mathcal{D}.$$
It suffices to show that $\gamma^{-1}(\mathcal{G})\cap \mathcal{D}=\varnothing$. Suppose not. Then there exists $P\in \gamma^{-1}(\mathcal{G})\cap \mathcal{D}$. Hence there is $G\in\mathcal{G}$ such that $\gamma(P)=G$ and $G\subseteq P$. By \eqref{eq:def of mathcal_D}, $P\cap (P^*\setminus T)=\varnothing$. By \eqref{eq:if mathcal_G is contained in a t-star}, $T\subseteq P$. Since $T\subseteq P^*$, it follows that $P\cap P^*=T$. Thus $|G\cap G^*|\leq |P\cap P^*|=|T|=t$, contradicting the fact proved in (2-1).
Therefore, $\gamma^{-1}(\mathcal G)\subseteq \mathcal P_{n_2}(T)\setminus \mathcal D$, which implies  that
$$
|\gamma^{-1}(\mathcal G)|
\le |\mathcal P_{n_2}(T)\setminus \mathcal D|
= |\mathcal P_{n_2}(T)| - |\mathcal D|.
$$
Clearly $|\mathcal P_{n_2}(T)| = (n_2-t)!$. Now we estimate the size of $\mathcal D$. Regard every $P\in\mathcal P_{n_2}(T)$ as a permutation on the remaining $n_2-t$ points after fixing the $t$-set $T$. The condition $P\cap(P^*\setminus T)=\varnothing$ means that this permutation has no fixed points relative to the fixed set $P^*\setminus T$. Hence $\mathcal D$ is in bijection with the set of derangements on $n_2-t$ points, so
$|\mathcal D| = D_{n_2-t}$.
Since $n_2\ge R(t)\ge t+2$ (as $R(t)\ge t+4((t+1)^t-1)\ge t+2$), we have $n_2-t\ge 2$.
By Lemma \ref{lem:derangements}, $|\mathcal D| = D_{n_2-t} > 0.25(n_2-t)!$.
Consequently,
\[
|\gamma^{-1}(\mathcal G)|
\le |\mathcal P_{n_2}(T)| - |\mathcal D|
< (n_2-t)! - 0.25(n_2-t)!
= 0.75(n_2-t)!.
\]
Combining this with $|\gamma^{-1}(\mathcal G)|=(n_2-n_1)!|\mathcal G|$ and $N_2=\frac{(n_2-t)!}{(n_2-n_1)!}$, we conclude that $|\mathcal G|<0.75N_2$.
    			
(2-3) If $G\in\mathcal{G}_0$, then $\tau(\F_{G})=t$ for any $G\in\mathcal{G}_0$. That is, every member of $\F_{G}$ contains a common $t$-subset, and so $|\F_{G}|\leq \binom{n_1-t}{k-t}=N_1$.

If $G\in\mathcal{G}_1$, by Lemma \ref{lem:relation_M_k and M_n_1}(2) and Theorem \ref{thm:classical EKR}, if $n_1\geq (t+1)(k-t+1)$, then $|\F_G|\leq \binom{n_1-t}{k-t}=N_1$. If $k\leq n_1<(t+1)(k-t+1)$, then for each $0\le i\le t-1$ we have $n_1-i \le n_1$ and $k-i \ge k-t+1$, hence
$$
\frac{n_1-i}{k-i} \le \frac{n_1}{k-t+1}.
$$
Therefore,
$$
\frac{\binom{n_1}{k}}{\binom{n_1-t}{k-t}} = \prod_{i=0}^{t-1} \frac{n_1-i}{k-i}
\le \left(\frac{n_1}{k-t+1}\right)^t < (t+1)^t,
$$
where the last inequality follows from $n_1<(t+1)(k-t+1)$. Since $|\F_G|\le \binom{n_1}{k}$, we obtain
$$
|\F_G|
\le \binom{n_1}{k}
= \frac{\binom{n_1}{k}}{\binom{n_1-t}{k-t}}\cdot \binom{n_1-t}{k-t}
< (t+1)^t N_1.
$$
Thus $|\F_G|\le (t+1)^tN_1$ for any $G\in\mathcal{G}_1$.
    			
Finally, since $n_2-t\geq R(t)-t\geq 4((t+1)^t-1)$, we have
\begin{align}\label{ineq:upper bound of mathcal_H neq varnothing}
\sum_{G\in\mathcal{G}}|\F_G| & =\sum_{G\in\mathcal{G}_0}|\F_G| + \sum_{G\in\mathcal{G}_1}|\F_G|\leq (|\mathcal{G}|-|\mathcal{G}_1|)N_1 + |\mathcal{G}_1|(t+1)^tN_1\\ \nonumber
    	    &=|\mathcal{G}|N_1 + |\mathcal{G}_1|((t+1)^t-1)N_1<0.75N_1N_2 + \frac{N_2}{n_2-t}((t+1)^t-1)N_1 \leq N_1N_2.
\end{align}
where the strict inequality follows from (2-2).
\end{proof}

Now we are ready to deal with the case of $r=2$ and $1\leq t\leq k\leq n_1\leq n_2$.

\begin{Lemma}[The case $r=2$ of Theorem \ref{thm:main-result}]\label{lem:result-generlized permutation}
Let $\F\subseteq \M_{k}(n_1,n_2)$ be a $t$-intersecting family with $1\leq t\leq k\leq n_1\leq n_2$. Let $C_0$ be a constant such that Theorem \ref{thm:KLMS} holds, and let $R(t)=\max\{C_0(t+1),t+4((t+1)^t-1)\}$. If $n_2\geq R(t)$, then $|\F|\leq \binom{n_1-t}{k-t}\binom{n_2-t}{k-t}(k-t)!$, and equality holds if and only if $\F$ is a $t$-star of $\M_{k}(n_1,n_2)$.
\end{Lemma}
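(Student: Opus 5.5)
The proof combines Lemma \ref{lem:relation_M_k and M_n_1}(3) with Lemma \ref{lem:sum_G leq N1N2}. First I dispose of the trivial case $t=k$: a $k$-intersecting family has at most one member, and the $t$-star has size $1$. So assume $t<k$. Note that
\[N_1N_2\cdot\frac{(n_2-n_1)!}{(n_2-k)!}=\binom{n_1-t}{k-t}\frac{(n_2-t)!}{(n_2-k)!}=\binom{n_1-t}{k-t}\binom{n_2-t}{k-t}(k-t)!.\]
By Lemma \ref{lem:relation_M_k and M_n_1}(3), $|\F|=\frac{(n_2-n_1)!}{(n_2-k)!}\sum_{G\in\mathcal{G}}|\F_G|$. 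Lemma \ref{lem:sum_G leq N1N2} gives $\sum_G|\F_G|\le N_1N_2$ in both cases $\mathcal G_1=\varnothing$ and $\mathcal G_1\neq\varnothing$, which yields the upper bound.

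For the equality case, Lemma \ref{lem:sum_G leq N1N2}(2) gives strict inequality when $\mathcal G_1\ne\varnothing$, so $\mathcal G_1=\varnothing$. Part (1) then gives $|\mathcal G|=N_2$ and $|\F_G|=N_1$ for each $G\in\mathcal G$. Since $\mathcal G\subseteq\M_{n_1}(n_1,n_2)$ is $t$-intersecting (Lemma \ref{lem:relation_M_k and M_n_1}(1)) and $n_2\ge R(t)\ge C_0t$, the equality part of Theorem \ref{thm:main-result-injections} shows that $\mathcal G$ is a $t$-star: $\mathcal G=\{G\in\M_{n_1}(n_1,n_2):T\subseteq G\}$ for some $T\in\M_t(n_1,n_2)$.

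The main step is to show $T\subseteq F$ for every $F\in\F$. Fix $F\in\F$ and choose $G\in\operatorname{Ext}_{(n_1,n_2)}(F)\subseteq\mathcal G$, so $T\subseteq G$. Suppose $T\not\subseteq F$. Apply Lemma \ref{lem:projection} with $r=2$ to $G\in\M_{n_1}(n_1,n_2)$ and $T$; this needs $n_2-t\ge2$, which holds since $R(t)\ge t+4$. It gives $G'\in\M_{n_1}(n_1,n_2)$ with $G\cap G'=T$. Then $G'\in\mathcal G$, so some $F'\in\F$ satisfies $F'\subseteq G'$. Hence $F\cap F'\subseteq G\cap G'\cap F=T\cap F$, which has fewer than $t$ elements. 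This contradicts the $t$-intersecting property of $\F$.

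Therefore $\F$ lies in the $t$-star $\{F\in\M_k(n_1,n_2):T\subseteq F\}$, whose size is $\binom{n_1-t}{k-t}\binom{n_2-t}{k-t}(k-t)!=|\F|$, so $\F$ equals it. Conversely, every $t$-star attains the bound. The only delicate point is checking the hypotheses of Lemma \ref{lem:projection} and Theorem \ref{thm:main-result-injections}, and both are covered by $n_2\ge R(t)$.
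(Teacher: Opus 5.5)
Your proof is correct and follows the paper's argument essentially step for step. You use the double-counting identity of Lemma \ref{lem:relation_M_k and M_n_1}(3) with Lemma \ref{lem:sum_G leq N1N2} for the bound, then the equality case of Theorem \ref{thm:main-result-injections} to make $\mathcal G$ a $t$-star, and Lemma \ref{lem:projection} to produce $G'$ with $G\cap G'=T$, forcing $T\subseteq F$. The only difference is cosmetic: the paper handles $k=n_1$ separately via Theorem \ref{thm:main-result-injections}, while your general argument absorbs it, which is legitimate because Lemma \ref{lem:sum_G leq N1N2} allows $k\le n_1$ (and then $\operatorname{Ext}_{(n_1,n_2)}(F)=\{F\}$).
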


\begin{proof}
The case of $t=k$ is trivial. The case of $r=2$ and $k=n_1$ follows from Theorem \ref{thm:main-result-injections}. Thus we may assume $1\leq t< k<n_1\leq n_2$.

By Lemma \ref{lem:relation_M_k and M_n_1}(3) and Lemma \ref{lem:sum_G leq N1N2}, we have
\begin{align*}
|\F| = \frac{(n_2-n_1)!\sum_{G\in\mathcal{G}}|\F_G|}{(n_2-k)!}\leq \frac{(n_2-n_1)!N_1N_2}{(n_2-k)!}
= \binom{n_1-t}{k-t}\binom{n_2-t}{k-t}(k-t)!.
\end{align*}

Suppose that $|\F|=\binom{n_1-t}{k-t}\binom{n_2-t}{k-t}(k-t)!$, which implies $\sum_{G\in\mathcal{G}}|\F_G|=N_1N_2$. By Lemma \ref{lem:sum_G leq N1N2}, this forces $\mathcal G_1=\varnothing$ and $|\mathcal G|=N_2$. Therefore, by Theorem \ref{thm:main-result-injections}, there is $T\in \M_{t}(n_1,n_2)$ such that
\begin{align*}
	\mathcal{G}=\{G\in\M_{n_1}(n_1,n_2):T\subset G\}.
\end{align*}
We claim that $\F = \{F\in\M_{k}(n_1,n_2):T\subseteq F\}$, which is a $t$-star of $\M_{k}(n_1,n_2)$.

First we prove that $\F \subseteq \{F\in\M_{k}(n_1,n_2):T\subseteq F\}$. Suppose, to the contrary, that there exists some $F^*\in\F$ with $T\not\subseteq F^*$. Then there exists $G^*\in \mathcal G$ such that $F^*\subset G^*$ (note that $k<n_1$). Since $T\subset G^*$, by Lemma \ref{lem:projection} with $r=2$, there exists $G'\in\M_{n_1}(n_1,n_2)$ such that $T\subset G'$ and $G^*\cap G'=T$. Since $T\subset G'$, $G'$ is also an element of $\mathcal{G}$. By (\ref{eq:def of mathcal_G}), there exists $F'\in\F$ such that $F'\subset G'$. Thus $F^*\cap F' \subseteq G^*\cap G' = T$. Clearly $F^*\cap F' \neq T$, since $T\not\subseteq F^*$. It follows that $|F^*\cap F'|<|G^*\cap G'|=|T|=t$, contradicting the $t$-intersecting property of $\F$. Hence $\F \subseteq \{F\in\M_{k}(n_1,n_2):T\subseteq F\}$.
		
On the other hand, the family $\{F\in\M_{k}(n_1,n_2):T\subseteq F\}$ is a $t$-star of $\M_{k}(n_1,n_2)$ whose size is $\binom{n_1-t}{k-t}\binom{n_2-t}{k-t}(k-t)! = |\F|$. Combining this with $\F \subseteq \{F\in\M_{k}(n_1,n_2):T\subseteq F\}$, we have $\F = \{F\in\M_{k}(n_1,n_2):T\subseteq F\}$.
\end{proof}

\section{Proof of Theorem \ref{thm:main-result} for $r\geq 3$}\label{section_HP}
	
For $r\geq 3$, an edge of $\mathcal{K}(n_1,n_2,\dots,n_r)$ is viewed as an ordered $r$-tuple $(x_1,x_2,\dots,x_r)\in [n_1]\times[n_2]\times\dots\times [n_r]$. An element $F\in \M_{k}(n_1,n_2,\dots,n_r)$ is a set of $k$ edges of the form $F = \{{\bm x}^{(1)},\dots,{\bm x}^{(k)}\} \subseteq [n_1]\times[n_2]\times\dots\times [n_r]$, where ${\bm x}^{(j)}=(x_1^{(j)},x_2^{(j)},\dots,x_r^{(j)})$ for $j\in [k]$, and for each $i\in[r]$, the coordinates $x_i^{(1)},\dots,x_i^{(k)}$ are pairwise distinct. For an edge ${\bm x}=(x_1,\dots,x_{r-1},x_r)$ of $\mathcal{K}(n_1,n_2,\dots,n_r)$, define its projection to the first $r-1$ coordinates by
\begin{align*}
	\pi({\bm x}) = (x_1,\dots,x_{r-1}).
\end{align*}
For $F\in \M_{k}(n_1,n_2,\dots,n_r)$, define $\pi(F)=\{\pi({\bm x}):{\bm x}\in F\}\in \M_{k}(n_1,\dots,n_{r-1})$. For $\F\subseteq \M_{k}(n_1,n_2,\dots,n_r)$, define $\pi(\F)=\{\pi(F):F\in\F\}\subseteq \M_{k}(n_1,\dots,n_{r-1})$.

\begin{Lemma}\label{lem:fibers_multipartite}
Let $\F\subseteq \M_{k}(n_1,n_2,\dots,n_r)$ be a $t$-intersecting family with $r\geq 3$ and $1\leq t\leq k\leq n_1\leq n_2\leq \dots\leq n_r$. Then the following hold.
\begin{enumerate}
\item[\em(1)] $\pi(\F)$ is $t$-intersecting.
\item[\em(2)] For any $Q\in\pi(\F)$, let $\F_Q=\{F\in\F:\pi(F)=Q\}$. Then
\begin{enumerate}
\item[\em(2-1)] $\F=\bigsqcup_{Q\in\pi(\F)}\F_{Q}$, where $\bigsqcup$ represents the disjoint union;
\item[\em(2-2)] $\F_{Q}$ is isomorphic to a $t$-intersecting subfamily of $\M_{k}(k,n_r)$. Furthermore, if $n_r\geq C_0t$, where $C_0$ is a constant such that Theorem \ref{thm:KLMS} holds, then $|\F_{Q}|\leq \frac{(n_r-t)!}{(n_r-k)!}$.
\end{enumerate}
\end{enumerate}
\end{Lemma}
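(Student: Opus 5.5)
The plan is to verify each part directly from the definitions of $\pi$ and $\F_Q$, then invoke Theorem \ref{thm:main-result-injections} for the size bound in (2-2).

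For (1), I take $Q_1=\pi(F_1)$ and $Q_2=\pi(F_2)$ with $F_1,F_2\in\F$. If ${\bm x}\in F_1\cap F_2$, then $\pi({\bm x})\in Q_1\cap Q_2$. Since $\pi$ is injective on each matching (the first coordinates are already distinct), distinct common edges project to distinct common elements. Hence $|Q_1\cap Q_2|\geq|F_1\cap F_2|\geq t$.

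For (2-1), the sets $\F_Q$ are exactly the fibers of $\pi$ restricted to $\F$. They are disjoint by \eqref{eq:property of fibers}, and their union is $\F$.

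For (2-2), I fix $Q=\{{\bm q}^{(1)},\dots,{\bm q}^{(k)}\}$, ordered for instance by first coordinate. Each $F\in\F_Q$ has the form $\{({\bm q}^{(j)},y_j):j\in[k]\}$ with $y_1,\dots,y_k\in[n_r]$ pairwise distinct. I map $F\mapsto\phi(F)=\{(j,y_j):j\in[k]\}\in\M_k(k,n_r)$. This map is injective, and it satisfies $|F\cap F'|=|\phi(F)\cap\phi(F')|$. The reason is that two edges $({\bm q}^{(j)},y)\in F$ and $({\bm q}^{(j')},y')\in F'$ coincide if and only if $j=j'$ and $y=y'$. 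So $\phi$ is an isomorphism of $\F_Q$ onto a $t$-intersecting subfamily of $\M_k(k,n_r)$, and $t$-intersection is preserved because $\F_Q\subseteq\F$. Applying Theorem \ref{thm:main-result-injections} with $n=n_r\geq C_0t$ then gives $|\F_Q|\leq\frac{(n_r-t)!}{(n_r-k)!}$.

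I expect no real obstacle. The only point needing care is checking that the intersection sizes are preserved exactly under $\phi$, and this follows because all members of $\F_Q$ share the same first $r-1$ coordinates row by row.
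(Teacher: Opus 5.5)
Your proposal is correct and follows the same route as the paper: (1) and (2-1) come straight from the definitions of $\pi$ and of the fibers $\F_Q$, and (2-2) identifies $\F_Q$ with a $t$-intersecting subfamily of $\M_k(k,n_r)$ by recording which vertex of the last part is attached to each edge of $Q$, then applies Theorem \ref{thm:main-result-injections} with $n=n_r\geq C_0t$. You also spell out two details the paper leaves implicit: that $\pi$ is injective on each matching, and that the relabeling $\phi$ preserves intersection sizes exactly.
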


\begin{proof}
(1) and (2-1) hold immediately by the definitions of $\pi(\F)$ and $\F_Q$.
		
(2-2) For every $Q\in \pi(\F)$, since $\F_Q\subseteq\F$, $\F_Q$ is also $t$-intersecting. Each $F\in\F_Q$ can be viewed as a matching between the $k$ edges of $Q$ and $k$ distinct vertices of the last part $[n_r]$. Thus $\F_{Q}$ is naturally identified with a certain $t$-intersecting subfamily of $\M_{k}(k,n_r)$. Furthermore, if $n_r\geq C_0t$, then by Theorem \ref{thm:main-result-injections}, $|\F_{Q}|\leq \frac{(n_r-t)!}{(n_r-k)!}$.
\end{proof}
	
Now we give a proof of Theorem \ref{thm:main-result} for any $r\geq 2$.

	
\begin{proof}[\bf Proof of Theorem \ref{thm:main-result}]
We use induction on $r$. Let $\text{Stat}(k,r)$ denote the statement of the theorem for $\M_{k}(n_1,n_2$, $\dots,n_r)$. The case $r=2$ holds by Lemma~\ref{lem:result-generlized permutation}. Now let $r\geq 3$, and assume that $\text{Stat}(k,r-1)$ holds. We will prove that $\text{Stat}(k,r)$ also holds.
		
Let $\F\subseteq \M_{k}(n_1,n_2,\dots,n_r)$ be a $t$-intersecting family with $1\leq t\leq k\leq n_1\leq n_2\leq \dots \leq n_{r-1}\leq n_r$ and $n_2\geq R(t)$. The case of $t=k$ is trivial. So we assume that $1\leq t<k$. By Lemma \ref{lem:fibers_multipartite}(1), $\pi(\F)$ is also $t$-intersecting. By the induction hypothesis, we have
\begin{align}\label{ineq:upper bound of pi_mathcal_F}
	|\pi(\F)| \leq\left((k-t)!\right)^{r-2}\prod_{i=1}^{r-1}\binom{n_i-t}{k-t}.
\end{align}
Combining Lemma \ref{lem:fibers_multipartite}(2) and \eqref{ineq:upper bound of pi_mathcal_F}, we have
\begin{align*}
|\F| = \left|\bigsqcup_{Q\in\pi(\F)}\F_{Q}\right|
	= \sum_{Q\in \pi(\F)} |\F_Q|\leq |\pi(\F)|(k-t)!\binom{n_r-t}{k-t}
	\leq \left((k-t)!\right)^{r-1}\prod_{i=1}^{r}\binom{n_i-t}{k-t}.
\end{align*}

Now suppose that $|\F|=\left((k-t)!\right)^{r-1}\prod_{i=1}^{r}\binom{n_i-t}{k-t}$. Then $|\pi(\F)| = \left((k-t)!\right)^{r-2}\prod_{i=1}^{r-1}\binom{n_i-t}{k-t}$. By the induction hypothesis, there exists a fixed $T\in\M_{t}(n_1,\dots,n_{r-1})$ such that
\begin{align}\label{eq:structure of pi_F}
	\pi(\F)=\{Q\in\M_{k}(n_1,\dots,n_{r-1}):T\subseteq Q\}.
\end{align}
Moreover, equality holds for every $\F_Q$, i.e., for every $Q\in\pi(\F)$, $|\F_Q|=\frac{(n_r-t)!}{(n_r-k)!}$. For each such $Q$, by Lemma \ref{lem:fibers_multipartite}(2-2), $\F_Q$ is isomorphic to a $t$-intersecting subfamily of $\M_k(k,n_r)$. Since its size attains the extremal value in Theorem \ref{thm:main-result-injections} (note that $n_r\geq n_2\geq C_0t$), the equality case in Theorem \ref{thm:main-result-injections} implies that this isomorphic subfamily is a $t$-star of $\M_k(k,n_r)$.
Translating this star back via the isomorphism yields a set $C_Q\in \M_t(n_1,\dots,n_{r-1},n_r)$ such that $\pi(C_Q)\subseteq Q$ and
\begin{align}\label{eq:structure of F_Q}
    \F_Q = \{F\in \M_k(n_1,\dots,n_{r-1},n_r): \pi(F)=Q,\; C_Q\subseteq F\}.
\end{align}
We will show that $C_Q$ is the same for all $Q\in \pi(\F)$ by two claims.

\textbf{Claim (i).} $\pi(C_Q)=T$ for every $Q\in\pi(\F)$.
		
Suppose not. Then there exist $Q'\in\pi(\F)$ and an edge ${\bm e}\in T\setminus \pi(C_{Q'})$. Since $n_{r-1}\ge n_2\ge R(t)\ge t+2$, we have $n_{r-1}-t\ge 2$. Since $Q'\in\pi(\F)$, we have $T\subseteq Q'$ by \eqref{eq:structure of pi_F}. Applying Lemma \ref{lem:projection} to $\M_{k}(n_1,\dots,n_{r-1})$, we obtain $Q^*\in \M_{k}(n_1,\dots,n_{r-1})$ such that $T\subseteq Q^*$ and $Q'\cap Q^*=T$. Since $T\subseteq Q^*$, by (\ref{eq:structure of pi_F}), we have $Q^*\in\pi(\F)$. Choose $F^*\in\F_{Q^*}$. Since $\pi(F^*)=Q^*$ and ${\bm e}\in T\subseteq Q^*$, there is a unique edge in $F^*$ whose image under $\pi$ is ${\bm e}$; denote it by $({\bm e},z^*)$.

Let $D = Q' \setminus \pi(C_{Q'})$, and let $Y$ be the set of last coordinates not used by any edge of $C_{Q'}$. Then $|D|=k-t$ and $|Y|=n_r-t$, so $|D|\le |Y|$ and $|Y|\ge 2$ (since $n_r\ge n_2\ge t+2$). Let $X = \{{\bm d}\in D : ({\bm d}, z^*) \in F^*\}$. Since $F^*$ is a matching, $|X|\leq 1$, so the map $\psi:X\to Y$, with $\psi({\bm d})=z^*$, is injective. By Lemma \ref{lem:auxiliary_1}, there exists an injection $\xi:D\to Y$ such that $\xi({\bm d})\neq z^*$ for ${\bm d}\in X$.
Now set
$$
F' = C_{Q'} \cup \{({\bm d},\xi({\bm d})) : {\bm d}\in D\}.
$$
Then $\pi(F')=Q'$ and $C_{Q'}\subseteq F'$, so by (\ref{eq:structure of F_Q}) we have $F'\in\F_{Q'}$.
Moreover, $F'$ contains no edge of the form $({\bm e},z^*)$: indeed, $({\bm e},z^*)\notin C_{Q'}$ because ${\bm e}\notin \pi(C_{Q'})$, and for any ${\bm d}\in D$ with $\pi(({\bm d},\xi({\bm d})))={\bm e}$, the definition of $\xi$ ensures $\xi({\bm d})\neq z^*$.

Now $\pi(F')=Q'$ and $\pi(F^*)=Q^*$. Hence any common edge of $F^*$ and $F'$ must have its $\pi$-image in $Q'\cap Q^*=T$.
However, the edge $({\bm e},z^*)$ belongs to $F^*$ but not to $F'$, so no common edge has $\pi$-image equal to ${\bm e}$.
Therefore, $
|F^*\cap F'| \le |T\setminus\{{\bm e}\}| = t-1$, contradicting the $t$-intersecting property of $\F$. Thus Claim (i) holds.

\textbf{Claim (ii).} $C_Q$ is the same for all $Q\in \pi(\F)$.

Suppose not. Then there exist distinct $Q,Q'\in\pi(\F)$ such that $C_{Q}\neq C_{Q'}$. By Claim (i), $\pi(C_Q)=\pi(C_{Q'})=T$. Since $\pi$ only deletes the $r$-th coordinate, the two $t$-sets $C_{Q}$ and $C_{Q'}$ differ only in their last coordinates. Hence there exists ${\bm e_0}=(x_1,\dots,x_{r-1})\in T$ and distinct $y,y'\in [n_r]$ such that $({\bm e}_0,y)\in C_{Q}$ and $({\bm e}_0,y')\in C_{Q'}$.
		
Choose $F\in\F_{Q}$. We will construct $F'\in \F_{Q'}$ such that $(F\cap F')\setminus \pi^{-1}(T)=\varnothing$, where $\pi^{-1}(T)=\{({\bm e},z): {\bm e} \in T,z\in[n_r]\}$. Put
$$
D=Q'\setminus T,\quad
V_r(C_{Q'})=\{z\in[n_r]:\exists\,{\bm e}\in T \text{ with } ({\bm e},z)\in C_{Q'}\},\quad
Y=[n_r]\setminus V_r(C_{Q'}).
$$
Then $|D|=k-t$ and $|Y|=n_r-t$, so $|D|\le |Y|$ and $|Y|\ge 2$. For each ${\bm e}\in (Q\cap Q')\setminus T$, let $({\bm e},z_e)\in F$ be the unique edge with $\pi(({\bm e},z_e))={\bm e}$.
Define
$$
X=\{{\bm e}\in (Q\cap Q')\setminus T : z_e\in Y\},
$$
and let $\psi:X\to Y$ be given by $\psi({\bm e})=z_e$. Since $F$ is a matching, different ${\bm e}$'s have distinct $z_e$'s, so $\psi$ is injective.
By Lemma \ref{lem:auxiliary_1}, there exists an injection $\xi:D\to Y$ such that
$\xi({\bm e})\neq \psi({\bm e})$ for all ${\bm e}\in X$. Now set
$$
F' = C_{Q'} \cup \{({\bm e},\xi({\bm e})):{\bm e}\in D\}.
$$
Then $\pi(F')=Q'$ and $C_{Q'}\subseteq F'$, so by \eqref{eq:structure of F_Q} we have $F'\in\F_{Q'}$. We claim that $(F\cap F')\setminus \pi^{-1}(T)=\varnothing$.
Suppose not, and take ${\bm d}\in (F\cap F')\setminus \pi^{-1}(T)$.
Write $\pi({\bm d})={\bm e'}\notin T$. Then ${\bm e'}\in (Q\cap Q')\setminus T$ and ${\bm d}=({\bm e'},z_{e'})$. If $z_{e'}\notin Y$, then $F'$ cannot contain $({\bm e'},z_{e'})$ because all last coordinates in $F'\setminus C_{Q'}$ are chosen from $Y$.
If $z_{e'}\in Y$, then ${\bm e'}\in X$, and the construction of $\xi$ gives $\xi({\bm e'})\neq z_{e'}$, so again $({\bm e'},z_{e'})\notin F'$.
Both cases contradict ${\bm d}\in F'$. Hence $(F\cap F')\setminus \pi^{-1}(T)=\varnothing$.

Now, since $C_Q\subseteq F$ and $C_{Q'}\subseteq F'$, the edge $({\bm e}_0,y)$ lies in $F$ while $({\bm e}_0,y')$ lies in $F'$, with $y\ne y'$. Hence no edge of $F\cap F'$ can have $\pi$-image equal to ${\bm e}_0$.
Combined with $(F\cap F')\setminus \pi^{-1}(T)=\varnothing$, this means every common edge of $F$ and $F'$ must have its $\pi$-image in $T\setminus\{{\bm e}_0\}$.
Since $|T\setminus\{{\bm e}_0\}|=t-1$, we obtain $|F\cap F'| \le t-1$, contradicting the $t$-intersecting property of $\F$. Therefore $C_{Q}=C_{Q'}$ for all $Q,Q'\in\pi(\F)$. Thus Claim (ii) holds.

It remains to prove that $\F$ is a $t$-star of $\M_k(n_1,\dots,n_r)$.
Since $C_Q$ is the same for all $Q\in\pi(\F)$, there exists a fixed $C^*\in \M_t(n_1,\dots,n_r)$ such that $C_Q=C^*$ for every $Q\in\pi(\F)$. For any $F\in \F$, $F\in \F_{\pi(F)}$, and so $C^*\subseteq F$. Hence $\F\subseteq\{F\in\M_{k}(n_1,\dots,n_r):C^*\subseteq F\}$. For the reverse inclusion, take any $F'\in \M_k(n_1,\dots,n_r)$ with $C^*\subseteq F'$.
Since $\pi(C^*)=T$, we have $T\subseteq \pi(F')$, so by (\ref{eq:structure of pi_F}), $\pi(F')\in\pi(\F)$. By \eqref{eq:structure of F_Q}, $\F_{\pi(F')} = \{F\in \M_k(n_1,\dots,n_r): \pi(F)=\pi(F'),\ C^*\subseteq F\}$. The right-hand side contains $F'$ because $\pi(F')=\pi(F')$ and $C^*\subseteq F'$. Hence $F'\in\F_{\pi(F')}\subseteq\F$. Thus $\{F\in \M_k(n_1,\dots,n_r): C^*\subseteq F\} \subseteq \F$. This completes the proof.
\end{proof}

\section{Proof of Theorems \ref{main_result_1}, \ref{main_result_2} and \ref{main_result_3}} \label{section_t-covers}

The notion of $t$-covers, introduced in (\ref{eq:definition_covering number}), is a standard tool in extremal set theory. In this section we give a unified proof of Theorems \ref{main_result_1}--\ref{main_result_3} using this notion.

We now turn to a general form of the $t$-cover argument. Although the notion was already used in the proof of Lemma \ref{lem:sum_G leq N1N2} for the specific families $\F_G$, we restate it here in a broader setting. Let $\Omega$ be a nonempty set and $t$ be a positive integer. A subset $T$ of $\Omega$ is called a \emph{$t$-cover} (or \emph{$t$-transversal}) of a family $\F\subseteq 2^{\Omega}$ if $|F\cap T|\geq t$ for any $F\in\F$.  Define the {\em $t$-covering number} $C_t(\F)$ of a family $\F\subseteq 2^{\O}$ to be the size of minimum $t$-covers among all $t$-covers of $\F$.
	
For $1\leq t\leq k\leq |\O|$, the $t$-covering number of any $t$-intersecting family $\mathcal{F} \subseteq \binom{\Omega}{k}$ satisfies $t \leq C_t(\mathcal{F}) \leq k$, and $C_t(\mathcal{F})=t$ if and only if $\mathcal{F}$ is a $t$-star. Consequently, to prove that $t$-stars are the unique maximum-sized $t$-intersecting families, it suffices to show that any family $\mathcal{F}_1$ with a larger $t$-cover number ($C_t(\mathcal{F}_1) \geq t+1$) must satisfy $|\mathcal{F}_1| < |\mathcal{F}_0|$ for some $t$-star $\mathcal{F}_0$.
	
For a family $\F\subseteq\binom{\O}{k}$ and a subset $S$ of $\O$, define $\F_S=\{F\in\F: S\subseteq F\}$. The following lemma comes from \cite[Lemma 3.1]{cao2021structure}. We provide its proof for completeness.
	
\begin{Lemma}\label{key lemma} {\rm \cite[Lemma 3.1]{cao2021structure}}
Let $\O$ be a nonempty set and $1\leq t\leq k\leq |\O|$. Let $\mathcal{F}\subseteq\binom{\O}{k}$ be a $t$-intersecting family, and $H\in \binom{\O}{s}$ with $1\leq s<|\O|$. If there exists $F\in\mathcal{F}$ such that $|F\cap H|=\alpha< t$, then for each $1\leq \ell\leq t-\alpha$, there exists $H^*\in \binom{\O}{s+\ell}$ such that $H\subseteq H^*$ and $|\mathcal{F}_{H}|\leq \binom{k-\alpha}{\ell}|\mathcal{F}_{H^*}|$.
\end{Lemma}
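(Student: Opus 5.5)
The plan is a double-counting argument based on the $t$-intersecting property. Since $|F\cap H|=\alpha<t$, every member of $\F_H$ meets $F$ in at least $t$ elements, while it meets $F\cap H$ in at most $\alpha$ elements. So it must use many elements of $F\setminus H$.

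First, fix $F\in\F$ with $|F\cap H|=\alpha<t$. Take any $G\in\F_H$. Then $H\subseteq G$ and $|G\cap F|\ge t$. Since $G\cap F\cap H=F\cap H$ has size $\alpha$, we get $|G\cap (F\setminus H)|\ge t-\alpha\ge \ell$. Note that $|F\setminus H|=k-\alpha$.

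Second, for each $\ell$-subset $L\subseteq F\setminus H$, put $H_L=H\cup L$. This set has size $s+\ell$ because $L\cap H=\varnothing$. By the previous paragraph, every $G\in\F_H$ contains some such $L$, and hence lies in $\F_{H_L}$. Therefore
\[
\F_H\subseteq\bigcup_{L\in\binom{F\setminus H}{\ell}}\F_{H\cup L},
\qquad
|\F_H|\le\sum_{L}|\F_{H\cup L}|\le\binom{k-\alpha}{\ell}\max_L|\F_{H\cup L}|.
\]
Take $H^*=H\cup L$ for an $L$ attaining the maximum. Then $H\subseteq H^*$ and $|H^*|=s+\ell$, which gives the claimed bound.

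The only point needing care is a degenerate case: if $\F_H=\varnothing$, any $(s+\ell)$-superset of $H$ works trivially. We also need $\binom{F\setminus H}{\ell}$ to be nonempty, which holds since $\ell\le t-\alpha\le k-\alpha$. This guarantees an $L$ exists and that $s+\ell\le|\Omega|$. I do not expect a real obstacle; the main step is the observation $|G\cap(F\setminus H)|\ge t-\alpha$.
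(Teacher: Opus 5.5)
Your proof is correct and follows essentially the same route as the paper. Every member of $\F_H$ contains at least $t-\alpha$ elements of $F\setminus H$, so $\F_H$ is covered by the $\binom{k-\alpha}{\ell}$ families $\F_{H\cup L}$ with $L\in\binom{F\setminus H}{\ell}$, and pigeonhole gives $H^*$. Your explicit check that $\ell\le t-\alpha\le k-\alpha$ (so such $L$ exist and $s+\ell\le|\O|$) is a detail the paper leaves implicit.
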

	
	\begin{proof}
		The case $\F_H=\emptyset$ is trivial. Assume that $\F_H\neq \emptyset$. For any $F'\in\F_H$, since $\F$ is $t$-intersecting, we have $|F'\cap F|\geq t$. It follows that $|F'\cap (H\cup F)|\geq |F'\cap H|+|F'\cap F|-|H\cap F|\geq s+t-\alpha$. Consequently, for each $1\leq \ell\leq t-\alpha$, $\F_H=\bigcup_{L\in \binom{F\setminus H}{\ell}}\F_{H\cup L}$, and hence $|\F_H|\leq \binom{k-\alpha}{\ell} |\F_{H\cup L}|$ for some $L\in \binom{F\setminus H}{\ell}$. Take $H^*=H\cup L$ to complete the proof.
	\end{proof}

	\begin{Lemma}\label{lemma:general recursion}
		Let $\O$ be a nonempty set and $1\leq t<k<|\O|$. Let $\F\subseteq \binom{\O}{k}$ be a $t$-intersecting family with $t+1\leq C_t(\F)=c\leq k$. Then there exist $\omega$ and $H'\in\binom{\O}{\omega}$ with $c\leq \omega\leq k$ such that $\F_{H'}\neq \emptyset$ and
		\begin{align*}
			|\F|\leq \binom{c}{t}(k-t+1)^{\omega-t}|\F_{H'}|.
		\end{align*}
	\end{Lemma}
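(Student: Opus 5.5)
We argue in two stages: a first application of Lemma \ref{key lemma} starting from a minimum $t$-cover, and then an iteration of Lemma \ref{key lemma} with $\ell=1$.

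\textbf{Stage 1 (from $\F$ to a $t$-subset of a minimum cover).} Let $C$ be a minimum $t$-cover of $\F$, so $|C|=c$. Every $F\in\F$ satisfies $|F\cap C|\geq t$, hence contains some $t$-subset of $C$. Therefore
\[
\F=\bigcup_{S\in\binom{C}{t}}\F_S,
\]
and by pigeonhole there is $S\in\binom{C}{t}$ with $|\F|\leq\binom{c}{t}|\F_S|$. Since $t<c\le k<|\O|$ and $S$ is only a $t$-set, Lemma \ref{key lemma} will apply with $s=t<|\O|$.

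\textbf{Stage 2 (iterative augmentation).} Set $H_0=S$ and $\omega_0=t$. Suppose we have $H_j$ with $|H_j|=t+j$ and
\[
|\F|\leq\binom{c}{t}(k-t+1)^{j}|\F_{H_j}|.
\]
We stop as soon as $|H_j|\geq c$ and every $F\in\F$ satisfies $|F\cap H_j|\geq t$, or more simply once $j$ can no longer be advanced. If $|H_j|<c$, then $H_j$ is not a $t$-cover, by minimality of $c$. So there is $F\in\F$ with $|F\cap H_j|=\alpha<t$.

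Apply Lemma \ref{key lemma} with $\ell=1$. This gives $H_{j+1}\supseteq H_j$ of size $t+j+1$ with $|\F_{H_j}|\le (k-\alpha)|\F_{H_{j+1}}|$. Here $k-\alpha\leq k$, but we need the factor $k-t+1$. To get it, we should exploit $\F_{H_j}\ne\emptyset$: pick any $F'\in\F_{H_j}$. Then $|F\cap H_j|=\alpha$, while $|F\cap F'|\ge t$ and $H_j\subseteq F'$. Hence $F$ meets $F'\setminus H_j$ in at least $t-\alpha$ elements. Refining the covering step in Lemma \ref{key lemma}: each $G\in\F_{H_j}$ must contain an element of $F\setminus H_j$, a set of $k-\alpha$ elements. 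Choose $\ell=t-\alpha$ instead of $1$ when profitable. Then
\[
\binom{k-\alpha}{t-\alpha}\le (k-t+1)^{t-\alpha},
\]
which holds because
\[
\binom{k-\alpha}{t-\alpha}=\prod_{i=1}^{t-\alpha}\frac{k-t+i}{i}\le (k-t+1)^{t-\alpha},
\]
since $(k-t+i)/i\le k-t+1$. So using Lemma \ref{key lemma} with $\ell=t-\alpha$ increases the size of $H$ by $t-\alpha$ and multiplies the bound by at most $(k-t+1)^{t-\alpha}$. This exactly matches the exponent bookkeeping $(k-t+1)^{\omega-t}$. Moreover, after this step $|F\cap H_{j+1}|$ reaches $t$ for that particular $F$.

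\textbf{Termination.} Since $H_j\subseteq G$ for any $G\in\F_{H_j}$, as long as $\F_{H_j}\neq\emptyset$ we have $|H_j|\leq k$. We continue while $H_j$ is not a $t$-cover. The pigeonhole choice in Lemma \ref{key lemma} can always be made with $\F_{H^*}\ne\emptyset$ whenever $\F_H\ne\emptyset$, since we pick the maximizing $L$. Thus nonemptiness is preserved, and sizes stay $\le k$. When the process stops, $H'=H_j$ is a $t$-cover, so $\omega=|H'|\ge c$ by minimality, and $\omega\le k$ by nonemptiness. We also need $|H'|<|\O|$ at each step for Lemma \ref{key lemma}; this holds since $|H'|\le k<|\O|$. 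The resulting bound is
\[
|\F|\le\binom{c}{t}(k-t+1)^{\omega-t}|\F_{H'}|.
\]

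\textbf{Main obstacle.} The delicate point is matching the factor $k-t+1$ per added element rather than $k-\alpha$. This is handled by always taking $\ell=t-\alpha$ and using the product estimate above. We must also check that one application does not overshoot beyond $k$, which is ensured because $\F_{H^*}\ne\emptyset$ forces $|H^*|\le k$.
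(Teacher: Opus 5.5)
Your proof is correct and follows the paper's argument. You pigeonhole over the $t$-subsets of a minimum $t$-cover, repeatedly apply Lemma \ref{key lemma} with $\ell=t-\alpha$, bound $\binom{k-\alpha}{t-\alpha}\le(k-t+1)^{t-\alpha}$, and use nonemptiness of $\F_{H'}$ to force $\omega\le k$. The differences are minor: you stop when $H_j$ becomes a $t$-cover, with termination guaranteed by $|H_j|\le k$, while the paper stops as soon as $|H_m|\ge c$; and once you abandon the $\ell=1$ detour, your invariant should be indexed by $|H_j|-t$ rather than by a step count $j$.
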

	
	\begin{proof}
		Since $C_t(\F)=c$, we can take a $t$-cover $T\in \binom{\O}{c}$ of $\F$ such that $|F\cap T|\geq t$ for any $F\in\F$. It follows that $\F=\bigcup_{H\in\binom{T}{t}}\F_H$ and so
		\begin{align}\label{ineq:sum_combin_bound_F and F_H_1}
			|\F|\leq \binom{c}{t}|\F_{H_1}|
		\end{align}
		for a certain $H_1\in\binom{T}{t}$. Since $|H_1|=t<c$, $H_1$ is not a $t$-cover of $\F$, and so there exists $F_1\in\F$ such that $|F_1\cap H_1|=\alpha_1<t$. By Lemma \ref{key lemma} with $H=H_1$ and $\ell=t-\alpha_1$, there exists $H_2\in\binom{\O}{|H_1|+(t-\alpha_1)}$ such that
		$H_1\subseteq H_2$ and
		\begin{align*}
			|\F_{H_1}|\leq \binom{k-\alpha_1}{t-\alpha_1}|\F_{H_2}|
			=\prod_{j=0}^{t-\alpha_1-1}\frac{k-\alpha_1-j}{t-\alpha_1-j} |\F_{H_2}|
			\leq (k-t+1)^{t-\alpha_1}|\F_{H_2}|.
		\end{align*}
		The above process can be iterated until $|H_m|\geq c$ for some $m\geq 2$. Specifically, we can construct a sequence $H_1\subseteq\cdots\subseteq H_{m-1}\subseteq H_m$ with $m\geq 2$ and $|H_{m-1}|<c\leq |H_m|$ such that for each $i$ with $1\leq i\leq m-1$, there exists $H_{i+1}\in\binom{\O}{|H_i|+t-\alpha_i}$ satisfying $H_i\subseteq H_{i+1}$ and
		\begin{align}\label{ineq:F_H_i and F_ H_i+1}
			|\F_{H_i}|\leq \binom{k-\alpha_i}{t-\alpha_i}|\F_{H_{i+1}}|\leq (k-t+1)^{t-\alpha_i}|\F_{H_{i+1}}|.
		\end{align}
		Note that $|H_{i+1}|-|H_i|=t-\alpha_i$, which yields $\sum_{i=1}^{m-1}(t-\alpha_i)=|H_m|-|H_1|=|H_m|-t$. Therefore, applying \eqref{ineq:F_H_i and F_ H_i+1} repeatedly, we have
		\begin{align}\label{ineq:F_H_1 and F_H_m}
			|\F_{H_1}|\leq (k-t+1)^{\sum_{i=1}^{m-1}(t-\alpha_i)}|\F_{H_m}|= (k-t+1)^{|H_m|-t}|\F_{H_m}|.
		\end{align}
		Combining (\ref{ineq:sum_combin_bound_F and F_H_1}) and $(\ref{ineq:F_H_1 and F_H_m})$, we obtain
		\begin{align}\label{ineq:20260102}
			|\F|\leq \binom{c}{t}(k-t+1)^{|H_m|-t}|\F_{H_m}|.
		\end{align}
		Finally, it remains to show $|H_m| \leq k$; then taking $H' = H_m$ completes the proof. Indeed,
		since  $c\geq t+1>0$, we have $\F\neq \emptyset$. It follows from \eqref{ineq:20260102} that $\F_{H_m}\neq \emptyset$. Therefore, there exists $F\in\F$ such that $H_m\subseteq F$, which implies $|H_m|\leq |F|=k$.
	\end{proof}
	
	The following lemma is an application of Lemma \ref{lemma:general recursion} that is specific to the EKR-type problem for hypergraph matchings.
	
	\begin{Lemma}\label{lemma:F_c for matchings}
		Let $r\geq 2$ and $1\leq t< k\leq \min_{1\leq i\leq r} n_i$.
		Let $\F\subseteq\M_k(n_1,\dots,n_r)$ be a $t$-intersecting family with $t+1\leq C_t(\F)=c\leq k$. If either $r=2$, $n_1=k$ and $n_2>2k-t$, or $r\geq 2$ and $\min_{1\leq i\leq r} n_i> ((k-t+1)(k-t))^{1/r} +k-1$, then
		\begin{align*}
			|\F|\leq \binom{c}{t}(k-t+1)^{c-t}\left((k-c)!\right)^{r-1}\prod_{i=1}^{r}\binom{n_i-c}{k-c}.
		\end{align*}
	\end{Lemma}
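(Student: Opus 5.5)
We apply Lemma \ref{lemma:general recursion} with $\O=E(\mathcal{K})$, viewing $\F\subseteq\M_k(n_1,\dots,n_r)\subseteq\binom{\O}{k}$ (note $k<|\O|$). This gives $\omega$ with $c\le\omega\le k$ and $H'\in\binom{\O}{\omega}$ such that $\F_{H'}\neq\emptyset$ and
\begin{align*}
|\F|\le \binom{c}{t}(k-t+1)^{\omega-t}|\F_{H'}|.
\end{align*}
Since $\F_{H'}\neq\emptyset$, $H'$ lies in some $k$-matching, so $H'$ is itself an $\omega$-matching. Hence $|\F_{H'}|$ is at most the number of $k$-matchings containing the fixed $\omega$-matching $H'$, namely
\begin{align*}
g(\omega):=\left((k-\omega)!\right)^{r-1}\prod_{i=1}^{r}\binom{n_i-\omega}{k-\omega},
\end{align*}
as in \eqref{eq:20251205}. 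So $|\F|\le\binom{c}{t}(k-t+1)^{c-t}\cdot (k-t+1)^{\omega-c}g(\omega)$. It then suffices to show that $h(\omega):=(k-t+1)^{\omega}g(\omega)$ is non-increasing for $c\le\omega\le k$; then $h(\omega)\le h(c)$ gives the claimed bound.

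The main computation is the ratio
\begin{align*}
\frac{g(\omega)}{g(\omega+1)}=\frac{\prod_{i=1}^{r}(n_i-\omega)}{(k-\omega)^{r-1}}\cdot\frac{1}{(k-\omega)}\cdot (k-\omega)^{r-1}\Big/ (k-\omega)^{r-1}\,,
\end{align*}
which, after cleaning up with $\binom{n-\omega}{k-\omega}=\frac{n-\omega}{k-\omega}\binom{n-\omega-1}{k-\omega-1}$ and $(k-\omega)!=(k-\omega)(k-\omega-1)!$, equals $\prod_{i=1}^{r}(n_i-\omega)/(k-\omega)$. We therefore need $\prod_{i}(n_i-\omega)\ge (k-t+1)(k-\omega)$ for $c\le\omega\le k-1$, and we aim for strict inequality.

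In the first hypothesis ($r=2$, $n_1=k$, $n_2>2k-t$), the product is $(k-\omega)(n_2-\omega)$, so we need $n_2-\omega\ge k-t+1$. For $\omega\le k-1$ this follows from $n_2\ge 2k-t$. In the second hypothesis, each $n_i-\omega\ge n_i-k+1>((k-t+1)(k-t))^{1/r}$, so $\prod(n_i-\omega)>(k-t+1)(k-t)\ge(k-t+1)(k-\omega)$, using $\omega\ge c\ge t+1$ so that $k-\omega\le k-t-1<k-t$. (In fact only $k-\omega\le k-t$ is needed.) The only delicate points are getting this ratio algebra right and checking the edge cases $\omega=k$ (where $g(k)=1$ and no ratio is needed) and $c=k$. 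I expect the ratio bookkeeping to be the main place where errors could arise, but it is routine.
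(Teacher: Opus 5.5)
Your proposal is correct and follows the paper's proof essentially step for step. You apply Lemma \ref{lemma:general recursion} with $\O=E(\mathcal{K})$, bound $|\F_{H'}|$ by the count \eqref{eq:20251205} with $\omega$ in place of $t$, and show that $(k-t+1)^{\omega}g(\omega)$ is non-increasing on $[c,k]$ via the ratio $\prod_{i}(n_i-\omega)/(k-\omega)$ under each hypothesis. One cosmetic point: your first displayed formula for $g(\omega)/g(\omega+1)$ literally simplifies to $\prod_i(n_i-\omega)/(k-\omega)^r$, which is wrong. Delete it and keep only the correct value $\prod_{i}(n_i-\omega)/(k-\omega)$ that you state afterwards.
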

	
	\begin{proof}
		By Lemma \ref{lemma:general recursion} with $\O=E(\mathcal{G}_1(n_1,\dots,n_r))$, there exist $\omega$ and $H'\in\M_{\omega}(n_1,\dots,n_r)$ with $c\leq \omega \leq k$ such that
		\begin{align*}
			|\F|\leq \binom{c}{t}(k-t+1)^{\omega-t}|\F_{H'}|\leq \binom{c}{t}(k-t+1)^{\omega-t}
			\left((k-\omega)!\right)^{r-1}\prod_{i=1}^{r}\binom{n_i-\omega}{k-\omega},
		\end{align*}
		where the last inequality follows from \eqref{eq:20251205}. Define a function
		\begin{align*}	L(\omega)=(k-t+1)^{\omega-t}\left((k-\omega)!\right)^{r-1}\prod_{i=1}^{r}\binom{n_i-\omega}{k-\omega},
		\end{align*}
		where $c\leq \omega \leq k$. Then $|\F|\leq \binom{c}{t} L(w)$. To complete the proof, it suffices to show that $L(\omega)\leq L(c)$ for any $c\leq \omega \leq k$.
		
		If $c=k$, then $L(\omega)=L(c)$. Assume that $c<\omega\leq k$. For $r=2$, $n_1=k$ and $n_2>2k-t$, we have that
		\begin{align*}
			\frac{L(\omega)}{L(\omega-1)}=\frac{k-t+1}{n_2-\omega+1}\leq \frac{k-t+1}{n_2-k+1}<1.
		\end{align*}
		For $r\geq 2$ and $\min_{1\leq i\leq r} n_i> ((k-t+1)(k-t))^{1/r} +k-1$, we have
		\begin{align*}
			\frac{L(\omega)}{L(\omega-1)}
			=\frac{(k-t+1)(k-\omega+1)}{\prod_{i=1}^{r}(n_i-\omega+1)}
			<\frac{(k-t+1)(k-c+1)}{(\min_{1\leq i\leq r} n_i-k+1)^r}
			\leq \frac{(k-t+1)(k-t)}{(\min_{1\leq i\leq r} n_i-k+1)^r}
			<1.
		\end{align*}
		Therefore, $L(\omega)$ is decreasing on the interval $[c,k]$.
	\end{proof}
	
	\subsection{Proof of Theorem \ref{main_result_1}}
	\begin{Lemma}\label{lemma:nontrivial t-intersecting families of injections}
		Let $1\leq t< k$ and $n>(t+1)(k-t+1)+t$. Let $\F\subseteq\M_k(k,n)$ be a $t$-intersecting family with $t+1\leq C_t(\F)=c\leq k$. Then
		\begin{align*}
			|\F|\leq (t+1)(k-t+1)\frac{(n-t-1)!}{(n-k)!}.
		\end{align*}
	\end{Lemma}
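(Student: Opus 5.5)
We plan to apply Lemma \ref{lemma:F_c for matchings} in the case $r=2$, $n_1=k$, $n_2=n$. Its hypothesis $n>2k-t$ follows from $n>(t+1)(k-t+1)+t$, because $(t+1)(k-t+1)+t\geq 2(k-t+1)+t=2k-t+2$. For $\M_k(k,n)$ we have $((k-c)!)\binom{k-c}{k-c}\binom{n-c}{k-c}=\frac{(n-c)!}{(n-k)!}$. Hence, for every $c$ with $t+1\leq c\leq k$,
\begin{align*}
|\F|\leq f(c):=\binom{c}{t}(k-t+1)^{c-t}\frac{(n-c)!}{(n-k)!}.
\end{align*}

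It therefore suffices to show that $f(c)\leq f(t+1)$ for all $t+1\leq c\leq k$. Note that $f(t+1)=(t+1)(k-t+1)\frac{(n-t-1)!}{(n-k)!}$ is exactly the claimed bound. We will show that $f$ is non-increasing on $[t+1,k]$ by computing the ratio
\begin{align*}
\frac{f(c+1)}{f(c)}=\frac{c+1}{c+1-t}\cdot\frac{k-t+1}{n-c}.
\end{align*}
For $c\geq t+1$ we have $\frac{c+1}{c+1-t}\leq \frac{t+2}{2}\leq t+1$. Since $c\leq k-1$, we also have $n-c\geq n-k+1$. So the ratio is at most $\frac{(t+1)(k-t+1)}{n-k+1}$.

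This is the step where the threshold has to be used carefully. The hypothesis $n>(t+1)(k-t+1)+t$ gives $n-k+1>(t+1)(k-t+1)-(k-t-1)$, which does not immediately give a ratio below $1$. So the crude bound $\frac{c+1}{c+1-t}\leq t+1$ is too wasteful, and we will use the sharper bound $\frac{c+1}{c+1-t}\leq\frac{t+2}{2}$. The ratio is then at most $\frac{(t+2)(k-t+1)}{2(n-c)}$, and we need $n-c\geq \frac{t+2}{2}(k-t+1)$ for $c\leq k-1$.

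To get this, first note $n-k+1\geq (t+1)(k-t+1)+t+2-k$. Then
\begin{align*}
(t+1)(k-t+1)-\tfrac{t+2}{2}(k-t+1)=\tfrac{t}{2}(k-t+1),
\end{align*}
and $\tfrac t2(k-t+1)\geq k-t-2$ holds for $t\geq 2$. For $t=1$, the inequality $\frac{c+1}{c}\cdot\frac{k}{n-c}\leq 1$ should be checked directly, using $n>2k+1$: we have $\frac{c+1}{c}\leq \frac32$ and $n-c\geq n-k+1\geq k+3>\tfrac32 k$ is not automatic, so here we bound $\frac{(c+1)k}{c(n-c)}$ more carefully, using that $c$ large makes the first factor close to $1$.

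Once monotonicity is established, $|\F|\leq f(c)\leq f(t+1)$, which completes the proof. The main obstacle we expect is exactly this ratio inequality near the threshold, and in particular the boundary cases $c=t+1$ and small $t$. If the simple estimate fails there, we will compare $f(c)$ with $f(t+1)$ directly via the product $\prod_{j=t+1}^{c-1}\frac{f(j+1)}{f(j)}$, pairing the factors $\frac{j+1}{j+1-t}$ into a telescoping product equal to $\binom{c}{t}/(t+1)$.
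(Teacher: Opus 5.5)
Your route is the same as the paper's: apply Lemma \ref{lemma:F_c for matchings} with $r=2$, $n_1=k$, $n_2=n$ (your check $n>2k-t$ is right), reduce to $f(c)\le f(t+1)$, and prove monotonicity via ratios of consecutive values. For $t\ge 2$ your argument is complete. The bound $\frac{c+1}{c+1-t}\le\frac{t+2}{2}$, together with $n-c\ge n-k+1\ge \frac{t+2}{2}(k-t+1)+\frac t2(k-t+1)-(k-t-2)\ge \frac{t+2}{2}(k-t+1)$, gives $f(c+1)\le f(c)$.

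The gap is the case $t=1$, which the statement explicitly includes ($1\le t<k$). There you only say that $\frac{(c+1)k}{c(n-c)}\le 1$ ``should be checked directly'' and offer a heuristic, so nothing is actually proved. The decoupled estimate genuinely fails there: it gives $\frac32\cdot\frac{k}{k+3}$, which exceeds $1$ once $k\ge 7$. Your telescoping fallback is not carried out either, and with the worst-case bound $n-j\ge n-k+1$ it fails for the same reason.

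The missing idea is to keep the $c$-dependence of the two factors together. For $t=1$ we have $n\ge 2k+2$, and for $2\le c\le k-1$, using $k\ge c+1$,
\[
c(n-c)-(c+1)k\ \ge\ c(2k+2-c)-(c+1)k\ =\ k(c-1)-c(c-2)\ \ge\ (c+1)(c-1)-c(c-2)\ =\ 2c-1\ >\ 0 .
\]
This is how the paper proceeds, uniformly in $t$. It writes $\frac{L_1(c-1)}{L_1(c)}=\frac{(n-c+1)(c-t)}{(k-t+1)c}$, substitutes $n-c+1>(t+1)(k-t+1)+t-c+1$, and compares with $1$ while keeping $c$ in both numerator and denominator. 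That comparison holds because $\bigl((t+1)(k-t+1)+t+1-c\bigr)(c-t)-(k-t+1)c$ is a concave quadratic in $c$ that is positive at $c=t+2$ and at $c=k$.

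Adding the displayed line for $t=1$, or using the coupled estimate throughout, completes your proof. Note that Theorem \ref{main_result_1} only uses this lemma for $t>1$, where your version already suffices.
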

	
	\begin{proof}
		Apply Lemma \ref{lemma:F_c for matchings} with $r=2$, $n_1=k$ and $n_2=n$, where $n>(t+1)(k-t+1)+t>2k-t$. Then we have
		\begin{align*}
			|\F|\leq \binom{c}{t}(k-t+1)^{c-t}\frac{(n-c)!}{(n-k)!}.
		\end{align*}
		Define a function
		\begin{align*}
			L_1(c)=\binom{c}{t}(k-t+1)^{c-t}\frac{(n-c)!}{(n-k)!},
		\end{align*}
		where $t+1\leq c\leq k$. Then $|\F|\leq L_1(c)$. To complete the proof, it suffices to show that $L_1(c)\leq L_1(t+1)$ for any $t+1\leq c\leq k$.
		
		If $k=t+1$, then $c=t+1$, and so $L_1(c)=L_1(t+1)$. Assume that $t+1<c\leq k$. Then
		\begin{align*}
			\frac{L_1(c-1)}{L_1(c)}=\frac{(n-c+1)(c-t)}{(k-t+1)c}> \frac{((t+1)(k-t+1)+t-c+1)(c-t)}{(k-t+1)c}\geq 1.
		\end{align*}
		Therefore, $L_1(c)$ is decreasing on the interval $[t+1,k]$.
	\end{proof}
	
	\begin{proof}[\bf Proof of Theorem \ref{main_result_1}.] Let $\F_0$ be a $t$-star of $\M_k(k,n)$. By \eqref{eq:20251205} and Lemma \ref{lemma:nontrivial t-intersecting families of injections}, it suffices to show that
		\begin{align*}
			|\F_0|=\frac{(n-t)!}{(n-k)!}>(t+1)(k-t+1)\frac{(n-t-1)!}{(n-k)!}.
		\end{align*}
		This holds for any $n>(t+1)(k-t+1)+t$.
	\end{proof}

	\subsection{Proof of Theorem \ref{main_result_2}}
	
	\begin{Lemma}\label{lemma:nontrivial t-intersecting families of generalized permutations}
		Let $1<t<k$ and $\min\{n_1,n_2\}\geq \sqrt{(k-t+1)(k-t)(t+2)}+t$. Let $\F\subseteq\M_k(n_1,n_2)$ be a $t$-intersecting family with $t+1\leq C_t(\F)=c\leq k$. Then
		\begin{align*}
			|\F|\leq (t+1)(k-t+1)\binom{n_1-t-1}{k-t-1}\binom{n_2-t-1}{k-t-1}(k-t-1)!.
		\end{align*}
	\end{Lemma}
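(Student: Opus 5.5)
Apply Lemma \ref{lemma:F_c for matchings} with $r=2$, then maximize the resulting bound over $c$, exactly as in the proof of Lemma \ref{lemma:nontrivial t-intersecting families of injections}.

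\textbf{Step 1: the hypothesis of Lemma \ref{lemma:F_c for matchings}.} We need $\min\{n_1,n_2\}>\sqrt{(k-t+1)(k-t)}+k-1$. Write $m=\min\{n_1,n_2\}$ and $a=k-t\geq 1$. Since $t\geq 2$, we have $\sqrt{(a+1)a(t+2)}\geq 2\sqrt{(a+1)a}$. Also $\sqrt{(a+1)a}\geq a$, so
\[
m\geq \sqrt{(a+1)a(t+2)}+t\geq \sqrt{(a+1)a}+a+t>\sqrt{(a+1)a}+k-1 .
\]
Hence Lemma \ref{lemma:F_c for matchings} applies and gives
\[
|\F|\leq L_2(c):=\binom{c}{t}(k-t+1)^{c-t}\binom{n_1-c}{k-c}\binom{n_2-c}{k-c}(k-c)!.
\]
At $c=t+1$ this equals the claimed bound, because $\binom{t+1}{t}=t+1$. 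So it remains to show $L_2(c)\leq L_2(t+1)$ for $t+1\leq c\leq k$, which is trivial if $k=t+1$.

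\textbf{Step 2: the ratio.} For $t+1<c\leq k$ we have
\[
\frac{L_2(c-1)}{L_2(c)}=\frac{(c-t)(n_1-c+1)(n_2-c+1)}{c\,(k-t+1)(k-c+1)}.
\]
This is the step that needs care, since the threshold in the statement must be shown to suffice for every $c$. We need
\[
(c-t)(n_1-c+1)(n_2-c+1)\geq c(k-t+1)(k-c+1).
\]
Put $j=c-t\in[2,a]$. Each factor satisfies $n_i-c+1\geq m-t-j+1$, and by hypothesis $m-t\geq \sqrt{(a+1)a(t+2)}$.

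I would first check the endpoint $j=2$, where the requirement becomes
\[
2(m-t-1)^2\geq (t+2)(a+1)(a-1).
\]
Heuristically $(m-t)^2\geq (a+1)a(t+2)$ gives this with room to spare.

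For general $j$, the left side is at least $j(m-t-j+1)^2$ and the right side is $(t+j)(a+1)(a-j+1)$. Since $(t+j)/j\leq (t+2)/2$, it suffices to show
\[
(m-t-j+1)^2\geq \tfrac{t+2}{2}(a+1)(a-j+1).
\]
Set $x=m-t\geq\sqrt{(t+2)a(a+1)}$. The left side decreases in $j$ and the right side decreases linearly in $j$, so I would treat $j\leq a$ directly. At $j=a$ the requirement is $(x-a+1)^2\geq \frac{t+2}{2}(a+1)$. This holds because $x\geq a\sqrt{t+2}\geq 2a$ (as $t\geq 2$), so $x-a+1\geq a+1$, and $(a+1)^2\geq\frac{t+2}{2}(a+1)$ needs $a+1\geq (t+2)/2$. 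That is not automatic, so I would sharpen the estimate using $x\geq\sqrt{(t+2)a(a+1)}$ fully: then $x-a+1\geq \sqrt{a(a+1)}(\sqrt{t+2}-1)+1$, and its square exceeds $(t+2)(a+1)/2$ whenever $a\geq 1$, because $(\sqrt{t+2}-1)^2a\geq (t+2)/2$ for $t\geq 2$ (since $(\sqrt{t+2}-1)^2\geq (t+2)/2$ exactly when $\sqrt{t+2}\geq 2+\sqrt2$, and the small-$t$ cases are checked directly). For intermediate $j$, I would use convexity of $(x-j+1)^2-\frac{t+2}{2}(a+1)(a-j+1)$ in $j$, bounding it at both endpoints, or more simply compare term by term as the ratio bound above did.

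Once the ratio is at least $1$, $L_2$ is nonincreasing on $[t+1,k]$, which yields the lemma.
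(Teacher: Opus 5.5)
Your overall route is the paper's: check the hypothesis of Lemma~\ref{lemma:F_c for matchings} (your Step~1 is correct), obtain $|\F|\le L_2(c)$, and show $L_2(c-1)/L_2(c)\ge 1$ for $t+2\le c\le k$. Your ratio formula is also fine, and so is the reduction, via $(t+j)/j\le (t+2)/2$, to $g(j):=(x-j+1)^2-\tfrac{t+2}{2}(a+1)(a-j+1)\ge 0$ for $2\le j\le a$.

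\textbf{The gap.} This last inequality is the only nontrivial content of the lemma beyond quoting Lemma~\ref{lemma:F_c for matchings}, and it is never actually proved. The endpoint $j=2$ is only asserted ``heuristically.'' The endpoint $j=a$ is justified only for $t\ge 10$, with smaller $t$ left to unspecified ``direct checks.'' Most seriously, the intermediate range uses convexity in the wrong direction. Since $g$ is a convex quadratic in $j$, its values at $j=2$ and $j=a$ bound it from \emph{above} on $[2,a]$, not from below. So positivity at the two endpoints says nothing about the interior, which is exactly where the minimum of a convex function can sit. The remark that both sides decrease in $j$ likewise gives no comparison.

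\textbf{How to close it.} You must also control the vertex. Set $y=x-j+1$ and $B=\tfrac{(t+2)(a+1)}{2}$. Then $a-j+1=y-(x-a)$, so $g=(y-B/2)^2+B(x-a-B/4)$, with $y$ ranging over $[x-a+1,x-1]$.

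\emph{Vertex in range.} If $B/2$ lies in $[x-a+1,x-1]$, then $B/4\le (x-1)/2\le x-a$, because $x\ge 2\sqrt{a(a+1)}>2a-1$. Hence $g\ge 0$.

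\emph{Vertex out of range.} Then the minimum is at $j=2$ or $j=a$. Both $g(2)$ and $g(a)$ increase with $x$, so you may take $x=x_0:=\sqrt{(t+2)a(a+1)}$ and write $s=\sqrt{t+2}\ge 2$. At $j=2$, $g(2)=\tfrac{(t+2)(a+1)^2}{2}-2x_0+1\ge \tfrac{u^2}{2}-2u+1>0$, where $u=s(a+1)\ge 6$. At $j=a$ with $a\ge 3$, $g(a)\ge ((s-1)a+1)^2-\tfrac{s^2(a+1)}{2}>\tfrac{s^2a^2}{4}-\tfrac{s^2(a+1)}{2}\ge 0$. When $a=2$ the only case is $j=2$.

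The paper handles this step differently. It keeps the factor $c-t$ and argues that $L_3(c)=(\sqrt{(k-t+1)(k-t)(t+2)}+t-c+1)^2(c-t)-(k-t+1)(k-c+1)c$ is increasing on $[t+2,k]$ with $L_3(t+2)>0$. Either way, an argument valid for every $c\in[t+2,k]$ is required, and your proposal does not yet supply one.
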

	
	\begin{proof}
		Since $t\geq 2$, we have
		\begin{align*}
			\min\{n_1,n_2\} & \geq \sqrt{(k-t+1)(k-t)(t+2)}+t   \\
			& \geq 2\sqrt{(k-t+1)(k-t)}+t >\sqrt{(k-t+1)(k-t)} +k-1,
		\end{align*}
		and so we can apply Lemma \ref{lemma:F_c for matchings} with $r=2$ to obtain
		\begin{align*}
			|\F|\leq \binom{c}{t}(k-t+1)^{c-t}\binom{n_1-c}{k-c}\binom{n_2-c}{k-c}(k-c)!.
		\end{align*}
		Define a function
		\begin{align*}
			L_2(c)=\binom{c}{t}(k-t+1)^{c-t}\binom{n_1-c}{k-c}\binom{n_2-c}{k-c}(k-c)!,
		\end{align*}
		where $t+1\leq c\leq k$. Then $|\F|\leq L_2(c)$. To complete the proof, it suffices to show that $L_2(c)\leq L_2(t+1)$ for any $t+1\leq c\leq k$.
		
		If $k=t+1$, then $c=t+1$ and $L_2(c)=L_2(t+1)$. Assume that $t+1<c\leq k$. Then
		\begin{align*}
			\frac{L_2(c-1)}{L_2(c)}&=\frac{(n_1-c+1)(n_2-c+1)(c-t)}{(k-t+1)(k-c+1)c}\geq \frac{(\min\{n_1,n_2\}-c+1)^2(c-t)}{(k-t+1)(k-c+1)c}\\
			& \geq \frac{(\sqrt{(k-t+1)(k-t)(t+2)}+t-c+1)^2(c-t)}{(k-t+1)(k-c+1)c}.
		\end{align*}
		To prove that $L_2(c-1)>L_2(c)$, we define the auxiliary function
		$$L_3(c)=(\sqrt{(k-t+1)(k-t)(t+2)}+t-c+1)^2(c-t)-(k-t+1)(k-c+1)c,$$
		where $t+1<c\leq k$. It is readily checked that $L_3(c)$ is increasing on the interval $[t+2,k]$ by analyzing the monotonicity of its derivative function, and so $L_3(c)\geq L_3(t+2)>0$ for $t+2\leq c \leq k$. Therefore, $L_2(c-1)>L_2(c)$, and hence $L_2(c)$ is decreasing on the interval $[t+1,k]$.
	\end{proof}
	
	\begin{proof}[\bf Proof of Theorem \ref{main_result_2}.]
		Let $\F_{0}$ be a $t$-star of $\M_k(n_1,n_2)$. By \eqref{eq:20251205} and Lemma \ref{lemma:nontrivial t-intersecting families of generalized permutations}, it suffices to show that
		\begin{align*}	|\F_{0}|=\binom{n_1-t}{k-t}\binom{n_2-t}{k-t}(k-t)!>(t+1)(k-t+1)
			\binom{n_1-t-1}{k-t-1}\binom{n_2-t-1}{k-t-1}(k-t-1)!,
		\end{align*}
		i.e., to show that $(n_1-t)(n_2-t)>(k-t+1)(k-t)(t+1)$.
		This holds for $\min\{n_1,n_2\}\geq \sqrt{(k-t+1)(k-t)(t+2)}+t> \sqrt{(k-t+1)(k-t)(t+1)}+t$.
	\end{proof}

	\subsection{Proof of Theorem \ref{main_result_3}}
	
	\begin{Lemma}\label{lemma_proof_3}
		Let $1\leq t< k <\min_{1\leq i\leq r} n_i$ and $r\geq \log_2((t+1)(k-t+1)^2)$. Let $\F\subseteq\M_k(n_1,\dots,n_r)$ be a $t$-intersecting family with $t+1\leq C_t(\F)=c\leq k$. Then
		\begin{align*}
			|\F|\leq (t+1)(k-t+1)((k-t-1)!)^{r-1}\prod_{i=1}^{r}\binom{n_i-t-1}{k-t-1}.
		\end{align*}
	\end{Lemma}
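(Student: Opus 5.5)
We follow the same two-step pattern as Lemmas \ref{lemma:nontrivial t-intersecting families of injections} and \ref{lemma:nontrivial t-intersecting families of generalized permutations}. First we check that the hypothesis of Lemma \ref{lemma:F_c for matchings} holds, so that
\begin{align*}
|\F|\leq L_4(c):=\binom{c}{t}(k-t+1)^{c-t}\left((k-c)!\right)^{r-1}\prod_{i=1}^{r}\binom{n_i-c}{k-c}.
\end{align*}
Then we show that $L_4$ is nonincreasing on $[t+1,k]$, and finally we evaluate $L_4(t+1)$.

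\textbf{Step 1: the size condition.} Set $m=\min_i n_i$. We need $m>((k-t+1)(k-t))^{1/r}+k-1$. Since $k<m$ we have $m-k+1\geq 2$. The hypothesis $r\geq \log_2((t+1)(k-t+1)^2)$ gives $2^r\geq (t+1)(k-t+1)^2>(k-t+1)(k-t)$. Hence $((k-t+1)(k-t))^{1/r}<2\leq m-k+1$, which is exactly the required inequality.

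\textbf{Step 2: monotonicity in $c$.} For $t+1<c\leq k$,
\begin{align*}
\frac{L_4(c-1)}{L_4(c)}=\frac{(c-t)\prod_{i=1}^r(n_i-c+1)}{c\,(k-t+1)(k-c+1)^{r-1}}.
\end{align*}
The key point is $n_i-c+1\geq (k-c+1)+1$, because $n_i\geq k+1$. Write $x=k-c+1\in[1,k-t-1]$, and use $(x+1)^r/x^{r-1}\geq x(1+1/x)^r$. With this, the ratio is at least
\[
\frac{(c-t)\,x\,(1+1/x)^r}{c(k-t+1)}.
\]
This is the main obstacle, since for large $x$ the factor $(1+1/x)^r$ is weak. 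If the crude bound is insufficient, I would split into two cases.
\begin{itemize}
\item[(a)] When $x$ is small, $(1+1/x)^r\geq 2^{r/x}$ must absorb the factor $c(k-t+1)/((c-t)x)$.
\item[(b)] In general, use $(x+1)^r/x^{r-1}\geq 2^r$ when $x=1$. For $x\geq 2$, use instead $\prod_i(n_i-c+1)\geq (x+1)^r$ together with $(x+1)^r\geq x^{r-1}\cdot 2^{r}x/2^{?}$.
\end{itemize}
The honest target is the inequality $(c-t)(x+1)^r\geq c(k-t+1)x^{r-1}$. Using $c/(c-t)\leq (t+2)/2\leq t+1$, it suffices to have $((x+1)/x)^r x\geq (t+1)(k-t+1)$. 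For $x=1$ this is $2^r\geq (t+1)(k-t+1)$, which is implied by the hypothesis. For larger $x$, I would bound $((x+1)/x)^r\geq 1+r/x$, so the left side is at least $x+r$. I would compare this with the generous hypothesis $2^r\geq (t+1)(k-t+1)^2$, possibly via convexity in $x$ on $[1,k-t-1]$. I expect the extra factor $(k-t+1)$ in the hypothesis to exist precisely to cover this range. If it turns out not to, I would instead compare $L_4(c)$ directly with $L_4(t+1)$ rather than step by step.

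\textbf{Step 3: conclusion.} Once $L_4(c)\leq L_4(t+1)$ is established, evaluate
\[
L_4(t+1)=(t+1)(k-t+1)\left((k-t-1)!\right)^{r-1}\prod_{i=1}^r\binom{n_i-t-1}{k-t-1},
\]
which is the claimed bound.
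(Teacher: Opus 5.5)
Your Steps 1 and 3 are correct and match the paper. Step 2, the only nontrivial part, is both miscomputed and left unfinished.

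The ratio you wrote is wrong. Going from $c$ to $c-1$, the factor $((k-c)!)^{r-1}$ contributes $(k-c+1)^{r-1}$ to the \emph{numerator}. Each $\binom{n_i-c}{k-c}$ contributes $(n_i-c+1)/(k-c+1)$. So
\[
\frac{L_4(c-1)}{L_4(c)}=\frac{(c-t)\prod_{i=1}^{r}(n_i-c+1)}{c\,(k-t+1)(k-c+1)},
\]
with $k-c+1$ only to the first power. Your expression is smaller than this by the factor $(k-c+1)^{r-2}$.

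With that error, the inequality you then try to prove is false under the hypotheses. Take $t=1$, $k=100$, $c=3$, $r=15$ (so $2^r\ge 2\cdot 100^2$) and all $n_i=101$. Your formula for the ratio then evaluates to $\frac{198}{300}(99/98)^{14}\approx 0.76<1$. So no case split, no use of $(1+1/x)^r\ge 1+r/x$, and no convexity argument in $x$ can close it. Your fallback of comparing $L_4(c)$ with $L_4(t+1)$ directly is not an argument. As it stands, the monotonicity of $L_4$ is not established.

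With the correct ratio the step is immediate, and this is how the paper proceeds:
\begin{itemize}
\item Since $n_i\ge k+1$, every factor satisfies $n_i-c+1\ge 2$, so $\prod_{i}(n_i-c+1)\ge 2^r\ge (t+1)(k-t+1)^2$.
\item For $c\ge t+2$ we have $c/(c-t)\le (t+2)/2$.
\item Also $k-c+1\le k-t-1$.
\end{itemize}
Hence
\[
\frac{L_4(c-1)}{L_4(c)}\ge \frac{2(t+1)(k-t+1)^2}{(t+2)(k-t+1)(k-t-1)}=\frac{2(t+1)(k-t+1)}{(t+2)(k-t-1)}>1,
\]
so $L_4$ is decreasing on $[t+1,k]$, and your Step 3 finishes the proof.

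Your guess about the extra factor $k-t+1$ in the hypothesis was also off. It is there to absorb the single factor $k-c+1$ in the denominator, not a power $(k-c+1)^{r-1}$.
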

	
	\begin{proof}
		Since $r\geq \log_2((t+1)(k-t+1)^2)$, we have $2\geq ((t+1)(k-t+1)^2)^{1/r}$. Thus
		\begin{align}\label{eq:20260107}
			\min_{1\leq i\leq r} n_i &\geq k+1=2+k-1\geq ((t+1)(k-t+1)^2)^{1/r}+k-1  \\
			& >((k-t+1)(k-t))^{1/r}+k-1.
		\end{align}
		It follows from Lemma \ref{lemma:F_c for matchings} that
		\begin{align*}
			|\F|\leq \binom{c}{t}(k-t+1)^{c-t}((k-c)!)^{r-1}\prod_{i=1}^{r}\binom{n_i-c}{k-c},
		\end{align*}
		where $t+1\leq c\leq k$. Define a function
		\begin{align*}
			L_4(c)=\binom{c}{t}(k-t+1)^{c-t}((k-c)!)^{r-1}\prod_{i=1}^{r}\binom{n_i-c}{k-c},
		\end{align*}
		where $t+1\leq c\leq k$. Then $|\F|\leq L_4(c)$. To complete the proof, it suffices to show that $L_4(c)\leq L_4(t+1)$ for any $t+1\leq c\leq k$.
		
		If $k=t+1$, then $c=t+1$, and so $L_4(c)= L_4(t+1)$. Assume that $t+1<c\leq k$. Then
		\begin{align*}
			\frac{L_4(c-1)}{L_4(c)}& =\frac{(c-t)\prod_{i=1}^{r}(n_i-c+1)}{(k-t+1)(k-c+1)c}
			=\frac{(1-\frac{t}{c})\prod_{i=1}^{r}(n_i-c+1)}{(k-t+1)(k-c+1)}\\
			&\geq \frac{(1-\frac{t}{t+2})\prod_{i=1}^{r}(n_i-c+1)}{(k-t+1)(k-c+1)}
			>\frac{\prod_{i=1}^{r}(n_i-c+1)}{(t+1)(k-t+1)(k-c+1)}\\
			&\geq \frac{(\min_{1\leq i\leq r} n_i-c+1)^r}{(t+1)(k-t+1)(k-c+1)}\geq \frac{(\min_{1\leq i\leq r} n_i-k+1)^r}{(t+1)(k-t+1)(k-c+1)}\\
			& > \frac{(\min_{1\leq i\leq r} n_i-k+1)^r}{(t+1)(k-t+1)(k-t)}>
			\frac{(\min_{1\leq i\leq r} n_i-k+1)^r}{(t+1)(k-t+1)^2}\geq 1,
		\end{align*}
		where the last inequality follows from \eqref{eq:20260107}. Therefore, $L_4(c)$ is decreasing on the interval $[t+1,k]$.
	\end{proof}
	
	\begin{proof}[\bf Proof of Theorem \ref{main_result_3}.]
		Let $\F_{0}$ be a $t$-star of $\M_k(n_1,\dots,n_r)$. By \eqref{eq:20251205} and Lemma \ref{lemma_proof_3}, it suffices to show that
		\begin{align*}
			|\F_0|=((k-t)!)^{r-1}\prod_{i=1}^{r}\binom{n_i-t}{k-t}> (t+1)(k-t+1)((k-t-1)!)^{r-1}\prod_{i=1}^{r}\binom{n_i-t-1}{k-t-1},
		\end{align*}
		i.e., to show that
		$$\prod_{i=1}^{r}(n_i-t) > (t+1)(k-t+1)(k-t).$$
		Since $r\geq \log_2((t+1)(k-t+1)^2)$, we have $2^r\geq (t+1)(k-t+1)^2$. Since $\min_{1\leq i\leq r} n_i-t\geq 2$, we have
		\begin{align*}
			\prod_{i=1}^{r}(n_i-t)\geq (\min_{1\leq i\leq r} n_i-t)^r \geq 2^r \geq (t+1)(k-t+1)^2 > (t+1)(k-t+1)(k-t).
		\end{align*}
		This completes the proof.
	\end{proof}

\section{Concluding remarks}\label{section_concluding}

This paper studies maximum-sized $t$-intersecting families of injections, partial permutations, generalized permutations, and $k$-matchings in $r$-partite $r$-uniform hypergraphs within a unified framework, and develops two complementary approaches to determine the maximum size of such families and to characterize the extremal examples. The first approach is based on a result in \cite{permutations_n_linear_t}. It can not only be used to obtain a compact consequence, namely Theorem \ref{thm:main-result-injections}, but also be generalized to Theorem \ref{thm:main-result}. The second approach uses the notion of $t$-covers and yields another class of results with a different emphasis. Theorem \ref{main_result_3} is of particular interest, as it requires only that the number of parts $r$ is sufficiently large, while each part can be as small as $k+1$.

A natural direction for future work is to extend our main results to the cross-$t$-intersecting setting. Families $\mathcal F_1\subseteq \binom{\Omega}{k_1},\ldots,\mathcal F_r\subseteq \binom{\Omega}{k_r}$ are cross-$t$-intersecting if every pair of members from distinct families intersects in at least $t$ elements; the size of such a structure may be measured by the sum or the product of the individual family sizes (see \cite{gp}). Keller, Lifshitz, Minzer, and Sheinfeld \cite{permutations_n_linear_t} have already given a cross-$t$-intersecting version of Theorem \ref{thm:KLMS}. Examining or improving the corresponding results for $k$-matchings in $r$-partite $r$-uniform hypergraphs remains an interesting direction for further investigation (cf. \cite{borg2014,borg2017max}).

%
%
%
%

\bibliographystyle{plain}

\end{document}